\numberwithin{equation}{section}
\newtheorem{theorem}{Theorem}[section]
\newtheorem{proposition}[theorem]{Proposition}
\newtheorem{corollary}[theorem]{Corollary}
\newtheorem{lemma}[theorem]{Lemma}
\newtheorem{remark}[theorem]{Remark}
\newcommand{\Leb}{{\rm Leb}}
\newcommand{\supp}{{\rm supp}}
\newcommand{\dc}{d^c}
\def\d{\operatorname{d}}
\newcommand{\PSH}{{\rm PSH}}
\newcommand{\vol}{\mathop{\mathrm{Vol}}\nolimits}
\newcommand{\amp}{\mathop{\mathrm{Amp}}\nolimits}
\newcommand{\capa}{\mathop{\mathrm{Cap}}\nolimits}
\newcommand{\bmo}{\mathop{\mathrm{BMO}}\nolimits}
\newcommand{\bigclass}{\mathop{\mathrm{big}}\nolimits}
\newcommand{\C}{\mathbb{C}}
\newcommand{\N}{\mathbb{N}}
\newcommand{\R}{\mathbb{R}}
\title[]{\quad Non-collapsing volume estimate for local K\"{a}hler metrics in big cohomology classes}
\author{Thai Duong Do, Duc-Bao Nguyen and Duc-Viet Vu}
\newcommand{\Addresses}{
{\bigskip
\footnotesize

		\textsc{Thai Duong Do, Institute for Artificial Intelligence, University of Engineering and Technology, Vietnam National University, Hanoi, Vietnam.}
        
        \textsc{National University of Singapore, Department of Mathematics, 10 Lower Kent Ridge Road, 119076, Singapore.}
		\noindent
		\par\nopagebreak
		\noindent
		\textit{E-mail address}: \texttt{dtduong@vnu.edu.vn}, \texttt{dtduong@nus.edu.sg}}
{
		\bigskip
  \footnotesize
		
  \textsc{Duc-Bao Nguyen, National University of Singapore, Department of Mathematics, 10 Lower Kent Ridge Road, 119076, Singapore.}
		\noindent
		\par\nopagebreak
		\noindent
		\textit{E-mail address}: \texttt{ducbao.nguyen@u.nus.edu}}

{
		\bigskip
		\footnotesize
		\textsc{Duc-Viet Vu, University of Cologne, Division of Mathematics, Department of Mathematics and Computer Science, Weyertal 86-90, 50931, K\"oln.}
		\noindent
		\par\nopagebreak
		\noindent
		\textit{E-mail address}: \texttt{dvu@uni-koeln.de}
}}
\begin{document}


\date{\today}

\begin{abstract}
We prove a uniform local non-collapsing volume estimate for a large family of singular metrics in the big cohomology classes, which are K\"ahler on an open Euclidean subset of the manifold. The key ingredient is a generalization of a mixed energy estimate for functions in the complex Sobolev space to the setting of big cohomology classes.
  \end{abstract}

\medskip

\maketitle

\noindent {\bf Classification AMS 2020:} 32U15, 32Q15, 53C23.

\smallskip

\noindent {\bf Keywords:} complex Monge-Amp\`ere equation, closed positive current, complex Sobolev space, big cohomology class, local non-collapsing.

\section{Introduction}

 There has been growing interest in studying the Gromov-Hausdorff convergence of K\"ahler spaces, in part motivated by questions concerning the degeneration of K\"ahler-Einstein metrics and the convergence of K\"ahler-Ricci flows; see \cite{Donaldson-Sun,DonaldsonSun2,Liu-Szekelyhidi,Liu-Szekelyhidi2,Tian-survey-KE,Tosatti-survey,Tosatti-KEflow,Song-Tian-canonicalmeasure}. A major breakthrough is the paper \cite{Guo-Phong-Song-Sturm} by Guo-Phong-Song-Sturm, which establishes a uniform diameter estimate and a non-collapsing volume estimate for K\"ahler metrics of Monge-Amp\`ere type. Main results in \cite{Guo-Phong-Song-Sturm} were refined in \cite{Vu-diameter,GuedjTo-diameter,GPSS_bodieukien}, extending to the semi-positive cohomology class and family settings, and in \cite{Nguyen_Vu_diamterforbigclass} to the big cohomology class case. Roughly speaking, if the Monge-Amp\`ere measure of the  (singular) K\"ahler metric $T$ (of minimal singularities) in $X$ satisfies a good enough global density property, then, by the previously cited works, we obtain uniform diameter estimates as well as non-collapsing volume estimates, thus providing a version of Gromov's pre-compactness theorem for the family of these metrics.

 In this paper, we are interested in a more general setting in which the regularity of Monge-Amp\`ere of $T$ is known only locally, e.g., in a given open subset of the Euclidean topology of $X$. Such a consideration has its roots in Perelman's work \cite{Perelman-entropy} and was considered in the context of K\"ahler geometry by Guo-Song \cite{Guo-Song-localnoncollapsing} in which they proved a local non-collapsing volume estimate by requiring only a lower bound for Ricci curvature in a small geodesic ball. This result was  subsequently extended in \cite{Vu-diameter} for metrics of bounded potentials in semi-positive cohomology classes  by reducing the problem to the case of metrics whose Monge-Amp\`ere are of bounded $L^1 \log L^p$ density. Such a reduction was achieved by utilizing the notion of complex Sobolev spaces and proving a mixed energy estimate for currents of bounded potentials. Our goal in this paper is to generalize this local non-collapsing volume estimate to metrics in big cohomology classes. Let us enter the details.

Let $(X,\omega_X)$ be a compact K\"{a}hler manifold of dimension $n$. We fix a norm $\|\cdot\|$ on the finite-dimensional vector space $H^{1,1}(X,\R)$. For every smooth form $\theta$ in a big cohomology class $\alpha$, we put $V_\theta:= \sup \{\varphi : \varphi \text{ $\theta$-psh}, \varphi \leq 0\}$. For each $\alpha \in H^{1,1}(X,\R)$, fix  a smooth closed form $\theta_\alpha \in \alpha $ such that $\|\theta_\alpha\|_{\mathscr{C}^2} \leq C \|\alpha\|$, where $C$ is a uniform constant independent of $\alpha$ (see, e.g, \cite{Guo-Phong-Song-Sturm} or \cite{Vu-diameter}). Let $A>0,K>0$ and $p_0>n$ be constants. Denote by $\mathcal{M}(X,A)$ the set of K\"ahler metrics $\omega$ such that $\omega = \theta_\alpha + dd^c \varphi_\alpha$ for some  $\theta_\alpha$-psh function $\varphi_\alpha$ with $\|\varphi_\alpha - V_{\theta_\alpha}\|_{L^\infty} \leq A$ and $\|\alpha\| \leq A$, where $\alpha$ is the cohomology class of $\omega$. 
For $x_0 \in X, R_0 \in (0,1]$, let $\mathcal{M}(X,A,x_0,R_0,p_0,K)$  be the set of $\omega \in \mathcal{M}(X,A)$ such that 
\[\int_{B_\omega (x_0,R_0)} f |\log f|^{p_0} \omega_X^n \leq K, \text{ where } f:= \frac{\omega^n}{V_\omega \cdot \omega_X^n}\cdot\]
 A local non-collapsing volume estimate for metrics in $\mathcal{M}(X,A,x_0,R_0,p_0,K)$ was obtained in \cite{Vu-diameter}. An advantage of this class is that one only has to control the local entropy of the metric $\omega$, whereas in other works, the global entropy is required.
To state our main results, we begin with some definitions.

Denote by $\mathcal{K}_{\bigclass}(X)$ the set of closed positive $(1,1)$-currents $T$ such that $T$ is a smooth K\"ahler form on an open (Euclidean) subset $U_T$ of $X$. Define $V_T:= \int_X \left \langle T^n \right \rangle$, where  $\left\langle \cdot \right \rangle$ is the non-pluripolar product in the sense of \cite{BEGZ}. It is clear that $V_T \geq \int_{U_T}T^n >0$. On $U_T$, the current $T$ induces a distance function $\d_T$, and we denote by $B_T(\cdot,\cdot)$ the ball with respect to this distance. For every Borel subset $E\subset U_T$, let $\vol_T(E)$ denote the volume of $E$ with respect to the metric $T$ on $U_T$. For $x \in U_T$, let $\d_T(x,\partial U_T)$ be the supremum of the set of radii $r>0$ such that the ball $B_{T}(x,r)$ is relatively compact in $U_T$.

Let $A> 0$ be a constant. We denote by $\mathcal{M}_{\bigclass}(X,A)$ the set of $T\in \mathcal{K}_{\bigclass}(X)$ such that $T = \theta_\alpha + dd^c \varphi_T$ for some $\theta_\alpha$-psh function $\varphi_T$ with $\|\varphi_T - V_{\theta_\alpha}\|_{L^\infty} \leq A$ and $\|\alpha\| \leq A$, where $\alpha$ is the cohomology class of $T$. Here is our main result.

\begin{theorem}\label{mainresult} Let $A > 0,K > 0$ and $p_0>1$ be constants. Let $T \in \mathcal{M}_{\bigclass}(X,A)$, $x_0 \in U_T$, and $R_0 \in (0, \d_T(x_0,\partial U_T)]$. Suppose that 
\begin{align}\label{ine-dkLplocal}
\int_{B_{T}(x_0,R_0)} |f|^{p_0} \omega_X^n \leq K, \text{ where } f:= V_T^{-1}   T^n /\omega_X^n \text{ is defined on }U_T.
\end{align}
  Then, for $q\in (1,n/(n-1))$, there exists a constant $ C= C(\omega_X,p_0,A,K,q)>0$ independent of $T,x_0$ and $R_0$ such that 
    \[\vol_T (B_{T}(x_0,r)) \geq C r^{4nq/(q-1)} V_T \text{ for every }r \in (0,R_0/2].\]
\end{theorem}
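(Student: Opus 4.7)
The plan is to adapt to the big cohomology class setting the local non-collapsing argument of Vu \cite{Vu-diameter}, using as the key new input the generalised mixed energy estimate for currents in big classes that is announced in the abstract. After rescaling $T$ one may assume $V_T = 1$, so that the target inequality becomes $\vol_T(B_T(x_0, r)) \geq C r^m$. The hypothesis $R_0 \leq \d_T(x_0, \partial U)$ ensures $B_T(x_0, R_0) \Subset U_T$, so in a neighbourhood of the ball $T$ is a bona fide smooth K\"ahler form and all local objects $\d_T, \vol_T, T^n$, the Dirichlet energy $\mathcal{E}_T(u) := \int du \wedge d^c u \wedge T^{n-1}$, etc., are standard. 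The task therefore reduces to exploiting the local $L^{p_0}$ density bound \eqref{ine-dkLplocal} together with the global non-pluripolar data on $T$ encoded in the condition $T \in \mathcal{M}_{\bigclass}(X,A)$.

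Following the blueprint of \cite{Vu-diameter}, the strategy is to upgrade the local $L^{p_0}$ density bound to an $L^{1}(\log L)^{p}$-type entropy bound (for some $p>n$) via a mixed energy estimate in the complex Sobolev space, and then to feed this entropy bound into a Ko\l odziej-type $L^\infty$ estimate for an auxiliary Monge-Amp\`ere equation, so that a comparison argument produces the volume lower bound. In the present setting the mixed energy estimate is generalised to bounded quasi-psh potentials of minimal singularities in a big class; schematically, one expects an inequality of the form
\begin{equation*}
\int_X u^2 \bigl\langle T^n \bigr\rangle \leq C\, \mathcal{E}_T(u)^{\gamma}\, \|u\|_\infty^{2-2\gamma}, \qquad \gamma = \gamma(n) \in (0,1),
\end{equation*}
valid for bounded Lipschitz $u$ of compact support in $U_T$, with $C = C(A, \omega_X)$. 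Plugging in test functions fashioned out of $\d_T(\cdot, x_0)$, combining with the $L^{p_0}$ bound on $f$ via H\"older, and then running the scheme of \cite{Vu-diameter} verbatim then yields the desired bound with $m = m(n)$ and $C = C(\omega_X, p_0, A, K)$.

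The non-trivial step is the generalised mixed energy estimate itself. In the K\"ahler case of \cite{Vu-diameter} one integrates by parts freely against a reference K\"ahler form; here $\theta_\alpha$ can be non-positive and the minimal-singularity envelope $V_{\theta_\alpha}$ has an unavoidable $-\infty$ locus. One must therefore work with Demailly regularisations of $\varphi_T$ and $V_{\theta_\alpha}$ combined with the non-pluripolar product calculus of \cite{BEGZ}, and exploit the hypothesis $\|\varphi_T - V_{\theta_\alpha}\|_\infty \leq A$ to cancel the singularities appearing in the integration-by-parts identities. The delicate point is to certify that the constant in the mixed energy estimate is independent of the class $\alpha$, depending only on $A, \omega_X, p_0, K$; the uniform normalisation $\|\theta_\alpha\|_{\mathscr{C}^2} \leq C\|\alpha\|$ with $\|\alpha\| \leq A$ makes this uniformity plausible but still requires regularisation schemes that respect the cohomological constraints, and this is where the bulk of the technical work will lie.
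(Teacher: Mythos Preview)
Your high-level plan---follow \cite{Vu-diameter} with a generalised mixed energy estimate---matches the paper, but you have misidentified where the technical work lies and you are missing one of the two key ingredients. The paper's proof of Theorem~\ref{mainresult} invokes \emph{two} new tools: the energy estimates (Propositions~\ref{energy1}, \ref{energy2}) \emph{and} a Sobolev inequality for complex Sobolev functions relative to an auxiliary current $T'\in\mathcal{W}^*_{\bigclass}$ (Theorem~\ref{sobolevW* 2}). The second is needed precisely because in the big setting the auxiliary Monge--Amp\`ere solution is not known to be smooth outside an analytic set, so the Sobolev inequality of \cite{Nguyen_Vu_diamterforbigclass} does not apply directly; you do not mention this point at all. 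Also, the schematic inequality you write, $\int_X u^2\langle T^n\rangle\le C\,\mathcal{E}_T(u)^\gamma\|u\|_\infty^{2-2\gamma}$, is not the form the paper proves: the actual energy estimates compare $\int_X u^p\,T^m\wedge\eta^{n-m}$ for varying $m$, where $T=\eta+dd^c\psi$ with $du\wedge d^cu\le T$ and $0\le\psi\le M$.

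More seriously, the obstruction to the energy estimate is not a question of uniform constants under Demailly regularisation of $\varphi_T$ and $V_{\theta_\alpha}$. In \cite{Vu-diameter} the integration-by-parts and Cauchy--Schwarz identities for complex Sobolev functions are proved locally and then glued; this fails in the big setting because the non-pluripolar product is not locally well-defined. The paper's remedy is a new \emph{global} regularisation theorem for functions in $W^{1,2}_*(X)$ (Theorem~\ref{regularize}), refining Vigny's result via the Dinh--Sibony kernel to obtain pointwise convergence outside a pluripolar set; this is what allows one to work directly against non-pluripolar measures and establish Theorems~\ref{integrationbypartdsh} and \ref{CSdsh}, from which the energy estimates follow. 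Demailly regularisation acts on psh potentials, not on the complex Sobolev test functions $u$, and would not resolve this issue.
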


Note that, as in \cite{Nguyen_Vu_diamterforbigclass}, our results still hold if, in the assumption (\ref{ine-dkLplocal}), we replace the $L^{p_0}$-norm on $B_{T}(x_0,R_0)$ by the $L^1 \log L^{p}$-norm, provided that $n\geq 2$ and $p>n$.

We also obtain a similar corollary with \cite[Corollary 1.7]{Vu-diameter}. Let $U$ be an open (Euclidean) subset of $X$. We define $\mathcal{M}_{\bigclass}(X,A,p_0,K,U)$ to be the subclass of $\mathcal{M}_{\bigclass}(X,A)$ consisting of $T$ such that $U \Subset U_T$,
and 
\[\int_{U} |f|^{p_0} \omega_X^n \leq K,
\text{ where }f:= V_T^{-1}  T^n /\omega_X^n \text{ is defined on }U_T.\]

\begin{corollary}\label{corollaryofmainresult}
    For $q\in (1,n/(n-1))$, there exists a constant $ C= C(\omega_X,p_0,A,K,q)>0$ such that $\d_T(x,\partial U) \leq C^{-1}$ for every $x\in U$ and 
    \[ \vol_T (B_{T}(x,r)) \geq C r^{4nq/(q-1)} V_T\]
    for every $x \in U$, $r\in (0,\d_T(x,\partial U)/2]$, and $ T \in \mathcal{M}_{\bigclass}(X,A,p_0,K,U)$.
\end{corollary}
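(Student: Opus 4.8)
The plan is to deduce Corollary \ref{corollaryofmainresult} directly from Theorem \ref{mainresult}, following the scheme of \cite[Corollary 1.7]{Vu-diameter}: the non-collapsing estimate is a pointwise application of Theorem \ref{mainresult} on $U$, while the uniform upper bound for $\d_T(x,\partial U)$ comes from feeding that estimate into the trivial \emph{a priori} bound for the total $T$-volume of $U$ furnished by $\int_U|f|^{p_0}\omega_X^n\le K$.

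First I would fix $T\in\mathcal{M}_{\bigclass}(X,A,p_0,K,U)$, so that $U\subset U_T$ and $\int_U|f|^{p_0}\omega_X^n\le K$ with $f=V_T^{-1}T^n/\omega_X^n\ge 0$, and for $x\in U$ set $R_0:=\d_T(x,\partial U)$, the supremum of the radii $r>0$ with $B_T(x,r)\Subset U$. For $x\in U_T\setminus U$ this quantity is $0$ and both asserted inequalities are vacuous, so only $x\in U$ matters, and there $R_0>0$ since $U$ is open and $T$ is a smooth metric on it. The two hypotheses of Theorem \ref{mainresult} are then immediate from $U\subset U_T$: a ball relatively compact in $U$ is relatively compact in $U_T$, so $R_0$ does not exceed the supremum of radii $r$ with $B_T(x,r)\Subset U_T$, i.e. $R_0$ is an admissible radius; and $B_T(x,R_0)\subset U$, so $\int_{B_T(x,R_0)}|f|^{p_0}\omega_X^n\le K$. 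Applying Theorem \ref{mainresult} at $x_0=x$ with this $R_0$ produces $m=m(n)>0$ and $C_1=C_1(\omega_X,p_0,A,K)>0$ with $\vol_T(B_T(x,r))\ge C_1r^mV_T$ for all $r\in(0,R_0/2]$, which is exactly the second assertion of the corollary.

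Next I would bound $R_0=\d_T(x,\partial U)$ from above, uniformly in $T$ and $x$. On $U$ one has $T^n=f\,V_T\,\omega_X^n$, so Hölder's inequality together with $\int_U|f|^{p_0}\omega_X^n\le K$ and $\int_U\omega_X^n\le\int_X\omega_X^n$ gives $\vol_T(U)\le C_2V_T$ for some $C_2=C_2(\omega_X,p_0,K)>0$ independent of $U$ and $T$ (concretely, a dimensional multiple of $K^{1/p_0}(\int_X\omega_X^n)^{1-1/p_0}$). Evaluating the non-collapsing estimate at $r=R_0/2$ and using $B_T(x,R_0/2)\subset U$ then yields $C_1(R_0/2)^mV_T\le\vol_T(U)\le C_2V_T$; since $V_T>0$, this forces $R_0\le 2(C_2/C_1)^{1/m}=:C_3$ with $C_3=C_3(\omega_X,p_0,A,K)$. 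Hence $\d_T(x,\partial U)\le C_3$ for every $x\in U_T$, and choosing $C:=\min\{C_1,C_3^{-1}\}$ (and keeping $m=m(n)$) delivers simultaneously $\d_T(x,\partial U)\le C^{-1}$ and $\vol_T(B_T(x,r))\ge Cr^mV_T$ for all $x\in U_T$ and $r\in(0,\d_T(x,\partial U)/2]$.

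At the level of the corollary there is essentially no obstacle: all the analytic work sits in Theorem \ref{mainresult}. The two genuine points are (i) that a closed positive current of finite mass $V_T$ whose density over $U$ is in $L^{p_0}(U,\omega_X^n)$ automatically has $T$-volume of $U$ bounded by a constant times $V_T$ — this is precisely what turns the pointwise non-collapsing radius into a uniform cap on $\d_T(x,\partial U)$ — and (ii) the small bookkeeping ensuring that $\d_T(x,\partial U)$ is a legitimate choice of $R_0$ in Theorem \ref{mainresult} and that the resulting constants do not depend on $U$. The $L^1\log L^p$ variant ($n\ge 2$, $p>n$) mentioned after Theorem \ref{mainresult} follows by the same argument once the corresponding strengthening of the theorem is available, the only change being that Hölder's inequality is replaced by a routine splitting of $\int_U f\,\omega_X^n$ according to whether $f$ is small or large.
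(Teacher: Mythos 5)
Your argument is correct and is exactly the intended one: the paper states that the corollary follows by the argument of \cite[Corollary~1.7]{Vu-diameter}, which is precisely the two-step scheme you carry out — apply Theorem~\ref{mainresult} pointwise on $U$ with $R_0 = \d_T(x,\partial U)$, and then feed the resulting lower bound at $r = R_0/2$ into the a priori bound $\vol_T(U) \leq C_2 V_T$ obtained from H\"older's inequality and the $L^{p_0}$ hypothesis to cap $\d_T(x,\partial U)$.
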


In order to prove Theorem \ref{mainresult}, we follow the same strategy as in the proof of \cite[Theorem 1.6]{Vu-diameter}. We first need an integration by-parts formula for functions in the complex Sobolev space. This has been done in \cite{Vu-diameter} provided that $T$ has bounded potentials. The difficulty arises when we deal with more general $T$, in that the non-pluripolar product is not always locally well-defined. To solve this issue, we establish a global regularization for functions in the complex Sobolev space, refining a previous result by Vigny \cite{Vigny} (see Theorem~\ref{regularize}). The integration by parts formula (Theorem 4.1) and the energy estimates (Section \ref{energy estimate section}) are proved using multiple regularization processes. The second ingredient is a Sobolev type inequality for metrics in big cohomology classes and  functions in complex Sobolev spaces (see Theorem \ref{sobolevW* 2}). We underline that the Sobolev inequality for metrics in big cohomology classes was known (\cite{Guo-Phong-Song-Sturm2,Nguyen_Vu_diamterforbigclass}), but under the assumption that the metrics are smooth on an open Zariski dense subset in $X$. We do not have such a condition in our present setting.  Theorem \ref{mainresult} will be obtained by arguing exactly as in the proof of \cite[Theorem 1.6]{Vu-diameter}, using energy estimates (Section \ref{energy estimate section}) and Theorem \ref{sobolevW* 2}.

We briefly discuss some potential applications and future developments of our results. As mentioned before, our results are motivated by the findings of Guo-Song in \cite{Guo-Song-localnoncollapsing}. The advantage of our results is that one only needs to check the $L^p$-norm of the entropy instead of trying to control the Ricci curvature of the metric in Guo-Song's paper. Another possible geometric application is that when we study the K\"ahler-Einstein metrics on quasi-projective varieties, e.g., on the complement of a divisor $D$ such that $K_X + D$ is big (see \cite{Tian-Yau-KE-complete, Berman-Guenancia, DangVu} for results in this direction), one can only control the $L^p$-norm of the entropy locally on $X\setminus D$. Finally, from the viewpoint of pluripotential theory, we note that the main technical part of our results is the analysis of the complex Sobolev spaces. Such analysis can be useful to characterize the H\"older continuous solution to the complex Monge-Amp\`ere equation in big cohomology classes, generalizing the main result of \cite{DKC_Holder-Sobolev} to the big cohomology setting.

The paper is organized as follows. In Section~\ref{analysisDS}, we prove a key global regularization theorem for functions in the complex Sobolev space. We generalize results for the complex Sobolev space in \cite{Vu-diameter} to currents of not necessarily bounded potentials in Sections~\ref{intbypart}, \ref{section CS}, and \ref{energy estimate section}. We prove a new Sobolev inequality for complex Sobolev functions in Section~\ref{section Sobolev ineq for complex Sobolev} and finish the proof of the main results in Section~\ref{proof of main result}. 

\hspace{1cm}

\noindent \textbf{Acknowledgment.} The first named author acknowledges the support of the Singapore Academies Southeast Asia Fellowship (SASEAF) Programme under the project “Math, Sobolev Space” (Grant No. E-146-00-0039-01). The second named author is supported by the Singapore International Graduate Award (SINGA). The third named author is partially supported by the Deutsche Forschungsgemeinschaft (DFG, German Research Foundation)-Projektnummer 500055552 and by the ANR-DFG grant QuaSiDy, grant no ANR-21-CE40-0016. Part of this work was carried out during a visit of the second named author at the Department of Mathematics and Computer Science, University of Cologne. He would like to thank them for their warm welcome and financial support. We would like to thank Tien-Cuong Dinh for many useful discussions about complex Sobolev spaces and for his support during the preparation of this work. Finally, we would like to thank the anonymous referee for many suggestions that greatly improved the paper.

\section{Regularization of functions in complex Sobolev spaces}\label{analysisDS}

Let $(X,\omega)$ be a compact K\"ahler manifold of dimension $n$. Let $W^{1,2}(X)$ be the Sobolev space of Borel measurable functions $u$ on $X$ such that $u$ and $\nabla u$ are square-integrable functions with respect to the measure $\omega^n$. We denote by $W^{1,2}_*(X)$ the set of functions $u$ in $W^{1,2}(X)$ such that there exists a closed positive $(1,1)$-current $T$ such that 
\begin{equation}\label{du wedge dc u <= T}du\wedge d^c u\leq T \text{ in the sense of currents.}\end{equation}
Here $d^c:= (i/2\pi) (\overline{\partial} - \partial)$. Recall that by \cite{Vigny}, $W^{1,2}_*(X)$ is a Banach space endowed with the norm 
\[\|u\|_*^2 := \int_X |u|^2 \omega^n + \inf \Big \{ \|T\|_X : = \int_X T\wedge \omega^{n-1} \Big \},\]
where the infimum is taken over all closed positive $(1,1)$-currents $T$ that satisfy \eqref{du wedge dc u <= T}.

This space is called \textit{the complex Sobolev space} and was introduced by Dinh-Sibony in \cite{DS_decay}. We refer the reader to \cite{Vigny} for further properties and \cite{DoNguyenW12} for a higher version of this space. We also refer to \cite{DLW,DLW2,Vigny_expo-decay-birational,Vu_nonkahler_topo_degree} for applications in complex dynamics, to \cite{DinhMarinescuVu,DKC_Holder-Sobolev} for applications in the complex Monge-Amp\`ere equation, and to \cite{Vu-log-diameter,Vu-diameter} for applications in complex geometry.

A function $u$ in $W^{1,2}_*(X)$ is a priori not well-defined pointwise. However, one can define good representatives of $u$ as follows (see also \cite{DinhMarinescuVu,Vigny-Vu-Lebesgue,Vu-diameter}). Let $U$ be a local chart of $X$. Let $\chi$ be a smooth radial cut-off function on $\C^n$ such that $\chi$ is compactly supported, $0\leq \chi \leq 1$, and $\int_{\C^n} \chi d \Leb = 1$, where $\Leb$ is the Lebesgue measure in $\C^n$. Define 
\[u_\varepsilon (z) := \varepsilon^{-2n} \int_{\mathbb{C}^n} u(z-x)\chi(x/\varepsilon) d\Leb,\]
for $\varepsilon \in (0,1]$. We call $u_\varepsilon$ the \textit{standard regularization} of $u$. A Borel function $u': X\rightarrow \mathbb{R}$ is said to be a \textit{good representative} of $u$ if, for every local chart $U$ and every standard regularization $(u_\varepsilon)_\varepsilon$ of $u$ on $U$, we have that $u_\varepsilon$ converges pointwise to $u'$ as $\varepsilon \rightarrow 0^+$ outside some pluripolar set in $U$. Good representatives always exist and differ from each other by a pluripolar set (see \cite[Theorem 2.10]{DinhMarinescuVu}). In this paper, we always consider good representatives of $u$.

 The following regularization theorem has been obtained by Vigny in \cite{Vigny}.

\begin{theorem}\cite[Theorem 10]{Vigny}\label{regularizeVig} Let $u$ be a function in $W^{1,2}_*(X)$.
Then there exists a sequence of smooth functions $(u_k)$ such that $u_k$ converges weakly as distributions to $u$. Moreover, there exists a constant $C$ that does not depend on $u$ and $k$, such that $\|u_k\|_* \leq C \|u\|_*$ for every $k$.
\end{theorem}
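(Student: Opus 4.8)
The plan is to build the smooth approximants by a single geodesic (exponential) convolution, in the spirit of Demailly's regularization of quasi-psh potentials, and to control their seminorm $\|\cdot\|_*$ by feeding a rank-one Cauchy--Schwarz estimate into the Lelong monotonicity of the trace measure of the dominating current. Normalize $\|u\|_*=1$ and fix a closed positive $(1,1)$-current $T$ with $du\wedge d^c u\le T$ and $M:=\int_X T\wedge\omega_X^{n-1}\le 2$. The one local ingredient is that convolution does not increase the seminorm: in a holomorphic chart, if $u*\chi_\varepsilon$ is the standard regularization then $\partial_j(u*\chi_\varepsilon)=(\partial_j u)*\chi_\varepsilon$, so for every $(1,0)$-vector $\xi$, Cauchy--Schwarz against the probability measure $\chi_\varepsilon\,d\Leb$ gives $|\langle\partial(u*\chi_\varepsilon),\xi\rangle|^2\le(|\langle\partial u,\xi\rangle|^2)*\chi_\varepsilon=\langle(du\wedge d^c u)*\chi_\varepsilon,i\xi\wedge\bar\xi\rangle\le\langle T*\chi_\varepsilon,i\xi\wedge\bar\xi\rangle$, i.e. $d(u*\chi_\varepsilon)\wedge d^c(u*\chi_\varepsilon)\le T*\chi_\varepsilon$. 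Globally, for $\varepsilon$ below the injectivity radius one sets $u_\varepsilon(x):=\varepsilon^{-2n}\int_{T_xX}u(\exp_x v)\,\chi(|v|_x/\varepsilon)\,J_\varepsilon(x,v)\,dv$ with a smooth normalization $J_\varepsilon$ making the kernel a probability measure; then $u_\varepsilon\in C^\infty(X)$, $u_\varepsilon\to u$ in $L^1(X)$ (hence as distributions) and $\int_X|u_\varepsilon|^2\omega_X^n\le C\int_X|u|^2\omega_X^n$.

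Next I would estimate $du_\varepsilon\wedge d^c u_\varepsilon$. Differentiating under the integral and expanding $\exp_x$ by its Jacobi fields gives $\partial u_\varepsilon(x)=P_\varepsilon(x)+\rho_\varepsilon(x)$, where $P_\varepsilon(x)$ is the average of the radially parallel-transported values $\partial u(\exp_x v)$ and $\rho_\varepsilon(x)$ collects the curvature remainders; schematically $\rho_\varepsilon(x)=\varepsilon^{-2n}\int \partial u(\exp_x v)\circ B_\varepsilon(x,v)\,\chi J_\varepsilon\,dv$ with $\|B_\varepsilon\|\le C\varepsilon^2$. Applying the local Cauchy--Schwarz trick to $P_\varepsilon$ and to $\rho_\varepsilon$ separately and using $du\wedge d^c u\le T$ yields, for each $\delta>0$,
\[
du_\varepsilon\wedge d^c u_\varepsilon\ \le\ (1+\delta)\,T^\flat_\varepsilon\ +\ C_\delta\,\varepsilon^4\,(\operatorname{tr}_{\omega_X}T)^\flat_\varepsilon\,\omega_X,
\]
where $T^\flat_\varepsilon$ is the radially transported geodesic convolution of $T$, a smooth positive $(1,1)$-form with $\int_X T^\flat_\varepsilon\wedge\omega_X^{n-1}\le C M$, and $(\operatorname{tr}_{\omega_X}T)^\flat_\varepsilon$ is the convolution of the trace function $\operatorname{tr}_{\omega_X}T$. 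Both currents on the right are positive, but neither is closed.

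Two observations turn this into the assertion. First (taming the error), by the Lelong monotonicity of $r\mapsto r^{2-2n}\int_{B(x,r)}T\wedge\omega_X^{n-1}$ one has $\int_{B(x,\varepsilon)}T\wedge\omega_X^{n-1}\le C\,\varepsilon^{2n-2}M$ uniformly in $x$, hence $(\operatorname{tr}_{\omega_X}T)^\flat_\varepsilon\le C\,\varepsilon^{-2}M$ pointwise; the error term is therefore dominated by $C_\delta\,\varepsilon^2 M\,\omega_X$, a constant multiple of the K\"ahler form, which is closed, positive and of vanishing mass. Second (making the main term closed), write $T=\gamma_T+dd^c\varphi_T$ with $\gamma_T$ a fixed smooth representative ($\|\gamma_T\|_{\mathscr C^2}\le C M$) and $\varphi_T$ quasi-psh, and let $\varphi_{T,\varepsilon}$ be the same geodesic convolution applied to $\varphi_T$; then $\hat T_\varepsilon:=\gamma_T+dd^c\varphi_{T,\varepsilon}$ is smooth and closed, since $dd^c$ of a function is given by a coordinate-independent formula. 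Running the Jacobi-field expansion once more, now on $dd^c\varphi_T=T-\gamma_T$, and using again the concentration bound (for $dd^c\varphi_T$) together with the $\bmo$-type estimate $(|\partial\varphi_T|)^\flat_\varepsilon\le C\,\varepsilon^{-1}M$, one obtains $T^\flat_\varepsilon\le\hat T_\varepsilon+C M\,\omega_X$ and $\hat T_\varepsilon\ge -C M\,\omega_X$. Combining, $du_\varepsilon\wedge d^c u_\varepsilon\le S_\varepsilon:=(1+\delta)\hat T_\varepsilon+C M\,\omega_X$, which is closed, positive, and of mass $\int_X S_\varepsilon\wedge\omega_X^{n-1}=(1+\delta)M+C M\,\vol(X)\le C'$; hence $\|u_\varepsilon\|_*^2\le C\int_X|u|^2\omega_X^n+C'\le C''$, and $u_k:=u_{1/k}$ (for $k$ large) satisfies the conclusion.

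The main obstacle is exactly the error control underlying these two observations. A priori the curvature remainders produce terms of the form $(\text{integrable function})\cdot\omega_X$, and such a term cannot in general be dominated by a closed positive current of comparable mass --- a $(2n-2)$-dimensionally concentrated spike of integral $\eta$ already forces mass $\gtrsim\eta\,\varepsilon^{2-2n}$ for $n\ge 2$, so no soft argument will do. What makes everything work is the uniform Lelong-type concentration estimate $\int_{B(x,\varepsilon)}T\wedge\omega_X^{n-1}\le C\varepsilon^{2n-2}M$, which forces those functions to be merely bounded (of size $\varepsilon^{-2}M$) rather than spiky, so that the $\varepsilon^2$ or $\varepsilon^4$ gain of the Jacobi expansion converts them into genuine closed positive currents of mass $\lesssim\|u\|_*^2$; pairing this with the regularization of the potential of $T$, which produces closedness of the main term for free, closes the loop. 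I expect the bookkeeping of the Jacobi remainders --- in particular verifying that $\hat T_\varepsilon$ dominates $T^\flat_\varepsilon$ up to a $CM\,\omega_X$ error and is bounded below by the same quantity --- to be the most delicate part; the rest is the standard density machinery.
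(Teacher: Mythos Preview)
Your approach is genuinely different from the one the paper cites. Vigny's proof (following Dinh--Sibony) does not use geodesic convolution; instead one blows up the diagonal $\Delta\subset X\times X$, writes $[\widetilde\Delta]=\Theta'+dd^c\varphi$ on the blow-up, regularizes $\varphi$ by $\varphi_k=\chi(\cdot+k)\circ\varphi-k$, and pushes forward $\gamma\wedge(dd^c\varphi_k+\Theta)$ and $\gamma\wedge(\Theta-\Theta')$ to obtain \emph{closed positive} $(n,n)$-forms $K_k^\pm$ on $X\times X$ with $K_k^+-K_k^-\to[\Delta]$. One then sets $u_k(x)=\int_y u(y)K_k(x,y)$ and $T_k(x)=\|K_k(x,\cdot)\|_{L^1}\int_y(K_k^++K_k^-)(x,y)\wedge T(y)$; the inequality $du_k\wedge d^cu_k\le T_k$ follows from a rank-one Cauchy--Schwarz step, and $T_k$ is automatically closed and positive because $K_k^\pm$ are. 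No curvature commutator ever enters.

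Your exponential-convolution route is plausible in outline, and you have correctly isolated the mechanism that keeps the errors tame (Lelong monotonicity gives $\int_{B(x,\varepsilon)}T\wedge\omega_X^{n-1}\le C\varepsilon^{2n-2}M$, whence the trace average is $O(\varepsilon^{-2}M)$). But the decisive step --- the pointwise $(1,1)$-form inequality $T^\flat_\varepsilon\le\hat T_\varepsilon+CM\,\omega_X$ --- is asserted, not proved. This is a bound on the commutator between radially-transported geodesic averaging and $dd^c$, applied to the unbounded quasi-psh function $\varphi_T$; schematically it involves curvature contracted with first and second distributional derivatives of $\varphi_T$, and while your scaling heuristics suggest the correct orders, turning them into an honest inequality of $(1,1)$-forms requires tracking the full tensorial Jacobi expansion, which you do not do. The auxiliary gradient estimate $(|\partial\varphi_T|)^\flat_\varepsilon\le C\varepsilon^{-1}M$ is likewise not standard and would itself need proof. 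You explicitly flag this as ``the most delicate part'', but as written the proposal stops short of it. The Dinh--Sibony kernel is preferable precisely because it sidesteps this commutator entirely: closedness and positivity of the dominating current are built into the kernel, so there is nothing left to estimate.
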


We will establish some properties of complex Sobolev functions in the next sections. A general strategy is to approximate using smooth functions. However, the weak convergence of this theorem is not enough for our purpose (see also the comment about the proof of Theorem~\ref{integrationbypartdsh}). Our goal in this section is to prove a stronger regularization theorem for functions in the complex Sobolev space, for which we will additionally obtain the pointwise convergence of the approximants to $u$.

Theorem~\ref{regularizeVig} has been proved in \cite{Vigny} using the method of Dinh-Sibony in \cite{DS_regula}.  We will further refine this method to obtain the following theorem.

\begin{theorem}\label{regularize}
     Let $u$ be a function in $W^{1,2}_*(X)$. Let $T$ be a closed positive $(1,1)$-current on $X$ such that $du\wedge d^c u \leq T$. Then there exists a sequence $(u_k)$ of smooth functions on $X$ such that $u_k \rightarrow u$ pointwise outside a pluripolar set. Moreover, there exist closed positive $(1,1)$-forms $T_k$ and a constant $C$ depending only on $X,\omega$ such that
    \[d u_k\wedge d^c u_k \leq T_k \qquad \text{and} \qquad \|T_k\|_X \leq C\|T\|_X.\]
    If $u$ is bounded, then $u_k$ are uniformly bounded.
\end{theorem}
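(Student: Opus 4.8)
The plan is to revisit the Dinh--Sibony regularization used by Vigny in the proof of Theorem~\ref{regularizeVig}, recast it as an \emph{averaging operator}, and exploit this to upgrade weak convergence to pointwise convergence outside a pluripolar set. Concretely, for each $\varepsilon\in(0,1]$ the construction provides a smooth family of maps $\Phi_{\varepsilon,a}\colon X\to X$, parametrized by $a$ in a fixed finite-dimensional ball, with $\Phi_{0,a}=\id$ --- obtained by gluing, via a partition of unity subordinate to a finite holomorphic atlas, the local coordinate translations, so that $a\mapsto\Phi_{\varepsilon,a}(x)$ is a submersion for every $x$ and the antiholomorphic part of $d\Phi_{\varepsilon,a}$ is $O(\varepsilon)$ --- together with a smooth probability measure $\mu_\varepsilon$ on that ball, such that
\[(P_\varepsilon u)(x):=\int u\bigl(\Phi_{\varepsilon,a}(x)\bigr)\,d\mu_\varepsilon(a)\]
is smooth, $P_\varepsilon u\to u$ in $L^2(X)$, and there are smooth closed positive $(1,1)$-forms $T_\varepsilon$ with $d(P_\varepsilon u)\wedge d^c(P_\varepsilon u)\le T_\varepsilon$ and $\|T_\varepsilon\|_X\le C\|T\|_X$, where $C=C(X,\omega)$; this last assertion is the quantitative content of Theorem~\ref{regularizeVig}, and follows from the Cauchy--Schwarz inequality for the positive Hermitian form $\beta\mapsto i\beta\wedge\overline\beta$ on $(1,0)$-forms combined with the smallness of the antiholomorphic part of $d\Phi_{\varepsilon,a}$. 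Since $\mu_\varepsilon$ is a probability measure, $\|P_\varepsilon u\|_{L^\infty}\le\|u\|_{L^\infty}$, which gives the last statement of the theorem; taking $u_k:=P_{1/k}u$ and $T_k$ the corresponding forms, it then remains only to prove that $P_\varepsilon u\to u$ pointwise outside a pluripolar set as $\varepsilon\to0$.

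For this, fix a holomorphic chart $U$ identified with a ball in $\C^n$ carrying the flat K\"ahler form $\beta$, and let $x$ vary over a relatively compact subset of $U$. By the change of variables $y=\Phi_{\varepsilon,a}(x)$, on $U$ the operator takes the form $(P_\varepsilon u)(x)=\int u(y)\,K_\varepsilon(x,y)\,d\Leb(y)$, where $K_\varepsilon(x,\cdot)\ge0$, $\int K_\varepsilon(x,\cdot)\,d\Leb=1$, $\supp K_\varepsilon(x,\cdot)\subset B(x,C\varepsilon)$ and $K_\varepsilon(x,y)\le C\varepsilon^{-2n}$. As the standard regularization $u_\varepsilon$ is given by a kernel with the same properties and both kernels have integral $1$, writing $\overline u_{x,\varepsilon}$ for the average of $u$ over $B(x,C\varepsilon)$ we obtain
\[\bigl|(P_\varepsilon u)(x)-u_\varepsilon(x)\bigr|\le C\varepsilon^{-2n}\int_{B(x,C\varepsilon)}\bigl|u-\overline u_{x,\varepsilon}\bigr|\,d\Leb\le C'\varepsilon^{1-2n}\int_{B(x,C\varepsilon)}|\nabla u|\,d\Leb\]
by the Poincar\'e--Wirtinger inequality, hence, by the Cauchy--Schwarz inequality and the pointwise bound $|\nabla u|^2\,d\Leb\le C\,\mu_T$ with $\mu_T:=T\wedge\beta^{n-1}$ (a consequence of $du\wedge d^c u\le T$ upon taking traces),
\[\bigl|(P_\varepsilon u)(x)-u_\varepsilon(x)\bigr|\le C''\,\varepsilon^{1-n}\,\mu_T\bigl(B(x,C\varepsilon)\bigr)^{1/2}.\]
If the Lelong number $\nu(T,x)$ vanishes then $\mu_T(B(x,r))=o(r^{2n-2})$ as $r\to0$, so the right-hand side tends to $0$ and $(P_\varepsilon u)(x)-u_\varepsilon(x)\to0$. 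Now $u_\varepsilon(x)$ converges, outside a pluripolar set, to the good representative of $u$ at $x$, by the very definition of a good representative; and by Siu's theorem $\{x\in X:\nu(T,x)>0\}$ is a countable union of analytic subsets of $X$ of positive codimension, hence pluripolar. Since $X$ is covered by finitely many charts, $P_\varepsilon u\to u$ pointwise outside a pluripolar set, which completes the proof.

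The main obstacle is the first step: one must realize the Dinh--Sibony/Vigny regularization as an averaging operator whose local kernels are probability densities supported in a $C\varepsilon$-ball with $L^\infty$-bound $O(\varepsilon^{-2n})$ --- so that it is comparable with the standard mollification --- while at the same time ensuring that the majorizing forms $T_\varepsilon$ are closed, positive, and of mass bounded by $\|T\|_X$. Reconciling these requirements is precisely the refinement of \cite{DS_regula,Vigny} announced before the statement; it rests on the fact that the maps $\Phi_{\varepsilon,a}$ are holomorphic away from the overlaps of the atlas and that they, together with their antiholomorphic parts, differ from the identity by $O(\varepsilon)$. Granting this, the comparison with the standard regularization and the identification of the exceptional set via Lelong numbers and Siu's theorem are routine.
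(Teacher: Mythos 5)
The overall plan --- compare the regularization against the standard mollification $u_\varepsilon$ via Poincar\'e--Wirtinger and show the difference vanishes where $\nu(T,x)=0$ --- is a genuinely different route from the paper's, but it rests on a claim you never establish. You posit an averaging operator $P_\varepsilon$ whose local kernel $K_\varepsilon(x,\cdot)$ is a probability density supported in $B(x,C\varepsilon)$ with the $L^\infty$-bound $O(\varepsilon^{-2n})$, and which simultaneously produces closed positive majorants $T_\varepsilon$ with $\|T_\varepsilon\|_X\leq C\|T\|_X$. You justify the mass bound by citing Theorem~\ref{regularizeVig}, but Vigny's theorem is proved with the Dinh--Sibony kernel $K_k$ coming from the blow-up of the diagonal, not with a gluing of local translations; you cannot transfer the conclusion to a different operator. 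Moreover, for the actual Dinh--Sibony kernel the $L^\infty$-bound you need is false once $n\geq 2$: Proposition~\ref{singK} shows $H_k(0,y)\lesssim e^{2k}/|y|^{2n-2}$, an integrable but genuinely unbounded singularity at $y=0$. Conversely, a glued-translation operator would give a bounded kernel, but then the closedness and positivity of $T_\varepsilon$ --- together with the mass bound --- become the problem: on chart overlaps the maps $\Phi_{\varepsilon,a}$ are not holomorphic and the resulting averages of $T$ need be neither closed nor positive, which is exactly the obstruction the blow-up construction of \cite{DS_regula} is designed to circumvent. You flag this tension yourself as ``the main obstacle,'' but it is not resolved by the refinement announced before the statement: that refinement is a quantitative analysis of the Dinh--Sibony kernel's support and singularity, not a reconstruction of it as a bounded probability kernel.

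What the paper actually does, and why the bounded-kernel step is not needed there: it keeps the Dinh--Sibony kernel, quantifies its support (Proposition~\ref{suppKk}: $\supp K_k(0,\cdot)\subset B(0,e^{-k})$) and its singularity profile (Proposition~\ref{singK}), and then estimates $|u_k(0)-u(0)|$ directly by splitting off a Lebesgue-point term and controlling the oscillation term by John--Nirenberg applied to $u$ (which is BMO by \cite[Proposition 6]{Vigny}), with the local BMO-norm on $B(0,2e^{-k})$ dominated by $\sqrt{\nu(T,0,4e^{-k})}\to 0$ --- this is how the unbounded singularity of $K_k$ is absorbed. A second, milder discrepancy: you invoke Siu's theorem for the pluripolarity of $\{\nu(T,\cdot)>0\}$, which is correct, but the paper gets the pluripolarity of the full exceptional set (non-Lebesgue points together with positive Lelong number) in one step from \cite[Theorem~1.1]{Vigny-Vu-Lebesgue}. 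Finally, the paper does not get smoothness of $u_k$ in one pass: the Dinh--Sibony kernel is only $L^1$, so the construction must be iterated to reach $\mathscr{C}^1$ and then perturbed to smooth $\widetilde{u_k}$; your outline would face the same issue if its kernel were merely $L^1$, and omits it.
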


The following corollary is useful for us.

\begin{corollary}\label{convergence}
    Let $\mu$ be a non-pluripolar finite Borel measure on $X$. Let $u$ be a bounded function in $W^{1,2}_*(X)$, and let $(u_k)$ be the sequence in Theorem~\ref{regularize}. Then
    \[\lim_{k\rightarrow \infty} \int_X |u_k - u | d\mu = 0.\]
\end{corollary}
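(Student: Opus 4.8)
The plan is to deduce the corollary directly from the two conclusions of Theorem~\ref{regularize} together with the dominated convergence theorem. First I would record that, since $\mu$ is a non-pluripolar measure, it assigns zero mass to every pluripolar set. By Theorem~\ref{regularize}, the smooth approximants $u_k$ converge pointwise to (a good representative of) $u$ outside some pluripolar set $P$; as $\mu(P)=0$, we conclude that $u_k \to u$ holds $\mu$-almost everywhere on $X$. Note that the choice of good representative of $u$ is immaterial here, since two good representatives differ only on a pluripolar set, which is again $\mu$-negligible.

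Next, since $u$ is bounded, the last assertion of Theorem~\ref{regularize} ensures that the $u_k$ are uniformly bounded; thus there is a constant $M>0$ with $\|u_k\|_{L^\infty} \le M$ for all $k$ and $\|u\|_{L^\infty} \le M$. Hence $|u_k - u| \le 2M$ pointwise, and the constant function $2M$ lies in $L^1(\mu)$ because $\mu$ is finite. Applying the dominated convergence theorem to the sequence $|u_k - u|$, which converges to $0$ $\mu$-almost everywhere and is dominated by $2M \in L^1(\mu)$, gives $\int_X |u_k - u|\, d\mu \to 0$, as claimed.

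Since the argument is a routine application of dominated convergence, there is no real obstacle. The only two points where the hypotheses genuinely enter are: the uniform boundedness of the $u_k$ (to produce an integrable dominating function, using that $u$ is bounded and $\mu$ is finite), and the fact that the convergence in Theorem~\ref{regularize} is valid outside a \emph{pluripolar} set rather than merely a Lebesgue-null set (so that it is $\mu$-almost everywhere convergence even when $\mu$ is singular with respect to Lebesgue measure).
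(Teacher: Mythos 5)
Your proof is correct and follows essentially the same approach as the paper: both exploit that $\mu$-a.e. convergence follows from pointwise convergence off a pluripolar set, then use uniform boundedness of the $u_k$ and finiteness of $\mu$ to pass to $L^1(\mu)$ convergence. The only cosmetic difference is that you invoke the dominated convergence theorem by name, whereas the paper writes out the bounded-convergence estimate $\int_X |u_k-u|\,d\mu \le 2M\,\mu(\{|u_k-u|>\delta\}) + \delta\,\mu(X)$ explicitly; these are the same argument.
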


\begin{proof}
    Since $\mu$ is non-pluripolar, by Theorem~\ref{regularize}, we have $u_k$ converges to $ u$ $\mu$-almost everywhere. Since $|u|,|u_k|$ are uniformly bounded, the result follows from the dominated convergence theorem. 
\end{proof}

To prove Theorem~\ref{regularize}, we first recall the regularization method of Dinh-Sibony in \cite{DS_regula}. Let $\Delta$ denote the diagonal of $X\times X$. Let $ \pi: \widetilde{X\times X} \to X\times X$ be the blow up of $X\times X$ along $\Delta$. Let $\widetilde{\Delta}:=\pi^{-1}(\Delta)$ be the exceptional divisor. Let $\gamma$ be a closed, strictly positive $(n-1,n-1)$-form on $\widetilde{X\times X}$. We note that $\pi_*(\gamma \wedge [\widetilde{\Delta}])$ is a closed positive current of bi-dimension $(n,n)$ supported on $\Delta$. Hence $\pi_*(\gamma \wedge [\widetilde{\Delta}]) = c [\Delta]$ for some constant $c > 0$. By rescaling, we can assume that $\pi_*(\gamma \wedge [\widetilde{\Delta}]) = [\Delta]$.

Since $[\widetilde{\Delta}]$ is a closed positive $(1,1)$-current, we can pick a quasi-psh function $\varphi$ and a smooth closed $(1,1)$-form $\Theta'$ such that $\sup_{\widetilde{X\times X}} \varphi = 0$ and $dd^c\varphi=[\widetilde{\Delta}]-\Theta'$. Note that $\varphi$ is smooth out of $\widetilde{\Delta}$ and $\varphi^{-1}(-\infty)=\widetilde{\Delta}$. Let $\chi:\R\rightarrow\R$ be a smooth, increasing, convex function such that $\chi\equiv0$ on $(-\infty,-1]$, $\chi(t)=t$ on $[1,+\infty)$, and $0\leq\chi'(t)\leq 1$. Define $\chi_k(t):=\chi(t+k)-k$ and $\varphi_k=\chi_k\circ \varphi$. We have $\varphi_k$ is smooth outside $\widetilde{\Delta}$ and equal $-k$ on the open neighborhood $\{\varphi < - k - 1\}$ of $\widetilde{\Delta}$. Thus $\varphi_k$ is smooth for every $k$. Since $0\leq \chi'(t) \leq 1$, we get $\chi(t) \geq \chi(t+1)-1$. Therefore $\varphi_k \geq \varphi_{k+1}$. Since $\sup_{\widetilde{X\times X}} \varphi = 0$, we infer that $\varphi_k$ decreases to $\varphi$.

Let $\Theta$ be a smooth closed positive $(1,1)$-form such that $\Theta - \Theta'$ is positive. We have
\[dd^c\varphi_k=(\chi_k''\circ\varphi)d\varphi\wedge d^c\varphi+(\chi_k'\circ\varphi)dd^c\varphi\geq (\chi_k'\circ\varphi)dd^c\varphi=-(\chi_k'\circ\varphi)\Theta'\geq-\Theta.\]
Define $\Theta_k^+:=dd^c\varphi_k+\Theta$ and $\Theta^-_k:=\Theta-\Theta'$. Then $\Theta_k^+,\Theta_k^-$ are closed positive $(1,1)$-forms and $\Theta_k^+-\Theta_k^-$ converges weakly to $[\widetilde{\Delta}]$ as currents. Define
\[\widetilde{K_k^\pm} :=\gamma\wedge \Theta_k^\pm \quad \text{ and } \quad K_k^\pm:=\pi_*\Big(\widetilde{K_k^\pm}\Big).\]
Then $K_k^\pm$ are closed positive $(n,n)$-currents and smooth outside $\Delta$ (thus are $L^1$ forms). Since $\pi_*(\gamma \wedge [\widetilde{\Delta}]) = [\Delta]$, we get that $K_k^+-K_k^- $ converges to $[\Delta]$ weakly as currents. Now, we put $\widetilde{K_k}:=\widetilde{K_k^+}-\widetilde{K_k^-}$ and $K_k:=K_k^+-K_k^-$.

The kernel $K_k$ was used by Vigny in the proof of Theorem~\ref{regularizeVig} as well as in the proof of \cite[Theorem 1.1]{DS_regula}. We now prove some properties of this kernel.

\begin{proposition}\label{suppKblow}
    $\supp \Big(\widetilde{K_k}\Big) \subset \varphi^{-1}([-\infty,-k+1])$ and thus converges to $\widetilde{\Delta}$ as $k\rightarrow \infty$.
\end{proposition}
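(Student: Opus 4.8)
The plan is to show that the signed smooth $(n,n)$-form $\widetilde{K_k}=\widetilde{K_k^+}-\widetilde{K_k^-}$ vanishes identically on the open set $\{\varphi>-k+1\}$, which is precisely the complement of $\varphi^{-1}([-\infty,-k+1])$; the convergence to $\widetilde{\Delta}$ then follows formally.

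First I would rewrite $\widetilde{K_k}$ so as to isolate its dependence on $\varphi_k$. By definition $\widetilde{K_k^+}=\gamma\wedge\Theta_k^+=\gamma\wedge(dd^c\varphi_k+\Theta)$ and $\widetilde{K_k^-}=\gamma\wedge\Theta_k^-=\gamma\wedge(\Theta-\Theta')$, so that
\[\widetilde{K_k}=\gamma\wedge\big(dd^c\varphi_k+\Theta'\big).\]
Substituting the defining relation $\Theta'=[\widetilde{\Delta}]-dd^c\varphi$ turns this into $\widetilde{K_k}=\gamma\wedge\big(dd^c(\varphi_k-\varphi)+[\widetilde{\Delta}]\big)$.

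Next comes the elementary but decisive observation about the cut-off: since $\chi(t)=t$ on $[1,+\infty)$, we have $\chi_k(t)=\chi(t+k)-k=t$ for every $t\geq 1-k$. Set $\Omega_k:=\{\varphi>-k+1\}$. This set is disjoint from $\widetilde{\Delta}$ because $\varphi^{-1}(-\infty)=\widetilde{\Delta}$, and it is open because $\varphi$ is smooth (hence continuous) on $\widetilde{X\times X}\setminus\widetilde{\Delta}$, which contains $\Omega_k$. On $\Omega_k$ one has $\varphi_k=\chi_k\circ\varphi=\varphi$, and both functions are smooth there, so $dd^c(\varphi_k-\varphi)=0$ on $\Omega_k$; moreover $[\widetilde{\Delta}]$ restricts to $0$ on $\Omega_k$. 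Plugging these into the formula above yields $\widetilde{K_k}=0$ on $\Omega_k$, and therefore $\supp(\widetilde{K_k})\subset\widetilde{X\times X}\setminus\Omega_k=\varphi^{-1}([-\infty,-k+1])$.

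Finally, the closed sets $\varphi^{-1}([-\infty,-k+1])$ decrease in $k$ with intersection $\varphi^{-1}(-\infty)=\widetilde{\Delta}$, so the supports of the $\widetilde{K_k}$ are eventually contained in any prescribed neighborhood of $\widetilde{\Delta}$, which is the claimed convergence. I do not expect any real obstacle: the only points to handle with care are the bookkeeping of the two sign conventions in $dd^c\varphi=[\widetilde{\Delta}]-\Theta'$, and the fact --- already established during the construction of $\varphi_k$ --- that each $\varphi_k$ is genuinely smooth, so that $\widetilde{K_k}$ is an honest signed smooth form whose vanishing may be tested pointwise on the open set $\Omega_k$.
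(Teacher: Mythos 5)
Your proof is correct and follows essentially the same line as the paper's: both reduce the claim to showing that $\Theta'+dd^c\varphi_k=[\widetilde{\Delta}]$ on the open set $\{\varphi>-k+1\}$, and hence that $\widetilde{K_k}=\gamma\wedge(\Theta'+dd^c\varphi_k)$ vanishes there. The only cosmetic difference is that you note directly that $\chi_k(t)=t$ for $t\geq 1-k$, so $\varphi_k=\varphi$ on this set, whereas the paper reaches the same conclusion via the chain-rule expansion $dd^c\varphi_k=(\chi_k''\circ\varphi)\,d\varphi\wedge d^c\varphi+(\chi_k'\circ\varphi)\,dd^c\varphi$ together with $\chi_k''\equiv 0$, $\chi_k'\equiv 1$ on $(-k+1,\infty)$.
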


\begin{proof}
Recall that $\widetilde{K_k} = \gamma \wedge (\Theta_k ^+ -\Theta_k ^-)$ where
\[\Theta_k ^+ -\Theta_k ^- = \Theta' + dd^c\varphi_k=\Theta'+ (\chi_k''\circ\varphi)d\varphi\wedge d^c\varphi+(\chi_k'\circ\varphi)dd^c\varphi.\]
By definition, we have $\chi''_k\equiv0$ outside $(-k-1,-k+1)$ and $\chi_k'\equiv 1$ on $[-k+1,\infty)$. This deduces that $\Theta' + dd^c \varphi_k = \Theta' + dd^c \varphi $ on $\varphi^{-1}((-k+1,\infty))$ which is an open subset since $\varphi$ is smooth outside $\widetilde{\Delta}$. Since $\Theta' +  dd^c\varphi=[\widetilde{\Delta}]$, $\Theta' + dd^c\varphi_k $ must have support inside $\varphi^{-1}([-\infty,-k+1])$. Therefore, $\widetilde{K_k}$ must have support inside $\varphi^{-1}([-\infty,-k+1])$.
\end{proof}

Since we want pointwise convergence, we only need to work on a local chart around a point of $X$. Fix a point $x_0 \in X$ and consider a chart of $X\times X$ centered at $(x_0,x_0)$. Let $(x,y)=(x_1,\ldots,x_n,y_1,\ldots,y_n)$, $|x_j|<3$, $|y_j|<3$, for $j=1,\ldots,n$, be local holomorphic coordinates of this chart  such that $\Delta=\{y=0\}$.  This chart can be covered by $n$ sectors $S_1,\ldots,S_n$, where $S_l$ is defined by $|x_j|<3$, $|y_j|<3|y_l|$ for $j\in\{1,\ldots,n\}\setminus\{l\}$, and $l=1,\ldots,n$.

The blow-up of this chart can be covered by $n$ charts $\widetilde{S_1},\ldots,\widetilde{S_n}$ where $\widetilde{S_l}$ is defined by $|x_j|<3$, $\widetilde{y_l}=y_l$, $\widetilde{y_j}=y_j/y_l$, $|y_l|<3$, $|y_j|<3|y_l|$ for $j\in\{1,\ldots,n\}\setminus\{l\}$ and $l=1,\ldots,n$. The equation for $\widetilde{\Delta}$ in $\widetilde{S_l}$ is $\widetilde{y_l} = 0$.

We want to control the support of $K_k$ on this chart.

\begin{proposition}\label{suppKk}
$\supp(K_k(0,\cdot))\subset B(0,Ce^{-k})$ for some uniform constant $C>0$.
\end{proposition}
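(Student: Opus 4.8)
\emph{Proof idea for Proposition~\ref{suppKk}.} The plan is to push the support estimate of Proposition~\ref{suppKblow} down through $\pi$ and combine it with the local structure of $\varphi$ near the exceptional divisor. Since $\pi$ is proper and restricts to a biholomorphism over $(X\times X)\setminus\Delta$, the set $\pi\big(\supp\widetilde{K_k}\big)$ is closed and contains $\supp K_k$, and for $y\neq 0$ the fibre $\pi^{-1}(0,y)$ is a single point; hence the slice $K_k(0,\cdot)$ is supported in $\{0\}\cup\{y\neq 0:\pi^{-1}(0,y)\in\supp\widetilde{K_k}\}$. By Proposition~\ref{suppKblow}, $\supp\widetilde{K_k}\subset\varphi^{-1}([-\infty,-k+1])$, so it suffices to show that $\varphi(\pi^{-1}(0,y))\le -k+1$ forces $|y|\le e^{-k}$ (up to a harmless normalization of constants explained at the end).

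Next I would describe $\varphi$ in the blow-up charts. On $\widetilde{S_l}$ the exceptional divisor is the reduced smooth hypersurface $\{\widetilde{y_l}=0\}$, so by the Lelong--Poincar\'e formula $[\widetilde{\Delta}]=dd^c\log|\widetilde{y_l}|$ there; combined with $dd^c\varphi=[\widetilde{\Delta}]-\Theta'$ this yields $dd^c(\varphi-\log|\widetilde{y_l}|)=-\Theta'$, a smooth $(1,1)$-form. By elliptic regularity $h_l:=\varphi-\log|\widetilde{y_l}|$ is smooth on $\widetilde{S_l}$, and in particular bounded, say $|h_l|\le C_0$, on a fixed neighbourhood of $\widetilde{\Delta}\cap\widetilde{S_l}$ lying over $(x_0,x_0)$, with $C_0$ depending only on $X$, $\omega$ and the fixed data $\gamma,\Theta',\varphi$. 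Given $y\neq 0$, pick an index $l$ with $|y_l|=\max_j|y_j|$; then $(0,y)\in S_l$, its preimage in $\widetilde{S_l}$ has coordinates $x=0$, $\widetilde{y_l}=y_l$, $\widetilde{y_j}=y_j/y_l$, so $\varphi(\pi^{-1}(0,y))=\log|y_l|+h_l(\pi^{-1}(0,y))$.

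Finally I would combine the two. For $k$ large enough that $\varphi^{-1}([-\infty,-k+1])$ lies in the above neighbourhood, $\varphi(\pi^{-1}(0,y))\le -k+1$ gives $\log|y_l|\le -k+1+C_0$, i.e.\ $|y_l|\le e^{-k+1+C_0}$, and since $|y|\le\sqrt{n}\,|y_l|$ in the sector $S_l$ we get $|y|\le C_1e^{-k}$ with $C_1:=\sqrt{n}\,e^{1+C_0}$ an absolute constant. Replacing $\varphi$ by $\varphi-\log C_1$ from the start of the construction normalizes $C_1$ to $1$ and gives $\supp(K_k(0,\cdot))\subset B(0,e^{-k})$; this replacement is legitimate because the normalization $\sup_{\widetilde{X\times X}}\varphi=0$ was only used to guarantee that $\varphi_k$ decreases to $\varphi$, which still holds for all large $k$ after an additive shift, and the finitely many small $k$ are irrelevant.

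The only point requiring real care is the second step: one must check that the smooth remainder $h_l$ is bounded with a constant \emph{independent of $k$} on the relevant shrinking neighbourhood of the exceptional divisor, and that the combinatorics of deciding into which chart $\widetilde{S_l}$ the fibre $\pi^{-1}(0,y)$ falls is handled uniformly in $y$. Beyond this bookkeeping — and the harmless tracking of the multiplicative constant — there is no analytic difficulty.
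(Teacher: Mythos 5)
Your approach is essentially the paper's: push the support estimate of Proposition~\ref{suppKblow} down through $\pi$, express $\varphi$ in the blow-up charts as $\log|\widetilde{y_l}|$ plus a smooth bounded remainder, and convert $\varphi\le -k+1$ into a decay bound on $|y|$. The paper uses the same local potential decomposition (written there as $\varphi=\log|\widetilde{y_1}|-v$) and the same pushdown; you are a bit more explicit about choosing the sector $S_l$ via $|y_l|=\max_j|y_j|$, and about invoking Lelong--Poincar\'e, but this is cosmetic.

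One correction: in your final normalization step the sign is reversed. Shifting $\varphi$ to $\varphi-\log C_1$ \emph{enlarges} the sublevel set $\{\varphi\le -k+1\}$ and thus the support, yielding a radius $\sim C_1^2 e^{-k}$ rather than $e^{-k}$; the shift that kills the constant is $\varphi\mapsto\varphi+\log C_1$. Your justification that $\sup\varphi=0$ is inessential (so a shift is harmless) is correct, but note the paper never actually pins the constant down either — both the ``omit $v$'' step and the passage from $|\widetilde{y_1}|<e^{-k}$ to $|y_j|<e^{-k}$ on the sector $S_1$ (where only $|y_j|<3|y_1|$ is guaranteed) silently absorb multiplicative constants. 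Since the proposition is only invoked later through inequalities with $\lesssim$, the cleaner reading is that both proofs establish $\supp(K_k(0,\cdot))\subset B(0,Ce^{-k})$ for a uniform $C$, which suffices; your normalization (with the corrected sign) is a legitimate way to make the literal statement true but is not needed.
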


\begin{proof}
By symmetry, we only need to consider $\widetilde{S_1}$. We can choose a local potential $v$ of $\Theta'$ such that in the coordinate $(x,\widetilde{y})$ of $\widetilde{S_1}$, $\varphi=\log|\widetilde{y_1}|-v$. By Proposition~\ref{suppKblow}, we have
\[\widetilde{S_1}\cap\supp\Big(\widetilde{K_k}\Big)\subset \{(x,\widetilde{y})\in\widetilde{S_1}:\ \log|\widetilde{y_1}|-v(x,\widetilde{y})\in  [-\infty,-k+1]\}.\] 
Since $v$ is smooth, we can bound it on $\widetilde{S_1}$ by some constant. This implies that $\widetilde{S_1}\cap\supp\Big(\widetilde{K_k}\Big)\subset\{(x,\widetilde{y})\in \widetilde{S_1}:\ |\widetilde{y_1}|<Ce^{-k}  \}$ for some uniform constant $C>0$. We push down to $X\times X$ to see that $S_1\cap\supp(K_k)\subset\{(x,y):\ |y_j|<e^{-k}\textnormal{ for }j=1,\ldots,n\}$ (note that on this sector, $|y_j|<3|y_1|$ for $j=2,\ldots,n$). Since $(x_0,x_0) = (0,0)$ in this coordinate, the result follows.
\end{proof}

Next, we want to control the singularity of $K_k$ on this chart. Let $H_k$ be a coefficient of $K_k$ in these coordinates. It is known by \cite[Lemma 3.1]{DS_regula} that 
\[H_k(0,y)\lesssim \dfrac{A_k}{|y|^{2n-2}} \]
for some constant $A_k$ depending only on $k$. Our next proposition quantifies explicitly $A_k$.

\begin{proposition}\label{singK}
    $H_k(0,y)\lesssim \dfrac{e^{2k}}{|y|^{2n-2}}\cdot$
\end{proposition}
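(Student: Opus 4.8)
The plan is to carry out the local computation behind \cite[Lemma 3.1]{DS_regula} on each blow-up chart $\widetilde{S_l}$, while keeping explicit track of the dependence on $k$, using the description of $\varphi$ from the proofs of Propositions~\ref{suppKblow} and~\ref{suppKk}. By symmetry it suffices to work in $\widetilde{S_1}$, where $\pi(x,\widetilde y)=(x,y)$ with $y_1=\widetilde y_1$ and $y_j=\widetilde y_j\widetilde y_1$ for $j\ge 2$, and where we may write $\varphi=\log|\widetilde y_1|-v$ for a smooth local potential $v$ of $\Theta'$. Recall that off $\widetilde\Delta$ the map $\pi$ is biholomorphic and $K_k=(\pi^{-1})^*\widetilde{K_k}$ with $\widetilde{K_k}=\gamma\wedge\big(\Theta'+(\chi_k''\circ\varphi)\,d\varphi\wedge d^c\varphi+(\chi_k'\circ\varphi)\,dd^c\varphi\big)$.

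First I would bound the coefficients of $\widetilde{K_k}$ on $\widetilde{S_1}\setminus\widetilde\Delta$. Since $\log|\widetilde y_1|$ is pluriharmonic there, $dd^c\varphi=-dd^c v$ on $\widetilde{S_1}\setminus\widetilde\Delta$; as $0\le\chi_k'\le 1$ and $\gamma,v$ are smooth, the terms $\gamma\wedge\Theta'$ and $\gamma\wedge(\chi_k'\circ\varphi)\,dd^c\varphi$ then have coefficients $O(1)$. In the remaining term, $d\varphi\wedge d^c\varphi$ is the sum of $d\log|\widetilde y_1|\wedge d^c\log|\widetilde y_1|$, whose coefficient is $\lesssim|\widetilde y_1|^{-2}$, and of terms of size $O(|\widetilde y_1|^{-1})$ and $O(1)$. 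Because $\chi_k''$ vanishes outside $(-k-1,-k+1)$ and $v$ is bounded, on $\supp(\chi_k''\circ\varphi)$ we have $|\widetilde y_1|=e^{\varphi+v}\gtrsim e^{-k}$, hence $|\widetilde y_1|^{-2}\lesssim e^{2k}$. Altogether every coefficient of $\widetilde{K_k}$ is $\lesssim e^{2k}$ on $\widetilde{S_1}\setminus\widetilde\Delta$.

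Next I would push down to obtain $H_k(0,\cdot)$. The slice $\{x=0\}$ meets $\Delta$ only at the origin, and its preimage in $\widetilde{S_1}$ is $\{x=0\}$ with coordinate $\widetilde y$ mapping to $y$ by $y_1=\widetilde y_1$, $y_j=\widetilde y_j\widetilde y_1$; this map is biholomorphic off $\widetilde\Delta$, with holomorphic Jacobian determinant $\widetilde y_1^{\,n-1}$. Hence restricting $\widetilde{K_k}$ to $\{x=0\}$ and pushing it forward multiplies its top-degree coefficient by $|\widetilde y_1|^{-2(n-1)}=|y_1|^{-2(n-1)}$, so that $H_k(0,y)\lesssim e^{2k}|y_1|^{-2(n-1)}$ by the previous step. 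Since $\widetilde{S_1}$ is the sector where $|y_1|$ is comparable to $|y|$, this yields $H_k(0,y)\lesssim e^{2k}/|y|^{2n-2}$.

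The only genuine difficulty is the bookkeeping: one must verify that the sole singular contribution to $\widetilde{K_k}$ away from $\widetilde\Delta$ is $(\chi_k''\circ\varphi)\,d\varphi\wedge d^c\varphi$ — in particular that $dd^c\varphi$ is bounded there, which relies on the pluriharmonicity of $\log|\widetilde y_1|$ off $\{\widetilde y_1=0\}$ — and that the mildly singular cross terms, of size $O(|\widetilde y_1|^{-1})=O(e^k)$ on the support, together with the Jacobian factor of the blow-down, are handled correctly; none of them affects the power $e^{2k}$ or the order $|y|^{-(2n-2)}$ of the singularity.
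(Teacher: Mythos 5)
Your proof is correct and follows essentially the same route as the paper's: decompose $\widetilde{K_k}=\gamma\wedge\big(\Theta'+(\chi_k''\circ\varphi)d\varphi\wedge d^c\varphi+(\chi_k'\circ\varphi)dd^c\varphi\big)$ on $\widetilde{S_1}$, observe that only the $\chi_k''$ term is singular, use that $\chi_k''$ is supported where $\log|\widetilde y_1|-v\in[-k-1,-k+1]$ (hence $|\widetilde y_1|\gtrsim e^{-k}$) to bound the coefficients of $\widetilde{K_k}$ by $e^{2k}$, and push down via the blow-down Jacobian $\widetilde y_1^{\,n-1}$ to pick up the factor $|y_1|^{-2(n-1)}\sim|y|^{-(2n-2)}$ on the sector $S_1$. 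The only place where you are slightly less explicit than the paper is the push-down bookkeeping: you focus on the top $y$-degree coefficient and wave at the "mildly singular cross terms," whereas the paper records all coefficients as finite sums $L(x,y_1,y_2/y_1,\ldots,y_n/y_1)P(y)\,y_1^{-m}\overline y_1^{-p}$ with $P$ homogeneous and $\deg P + 2n - 2\ge m+p$, from which the uniform bound $e^{2k}|y|^{-(2n-2)}$ for every coefficient follows on $S_1$; this degree inequality is the precise substitute for your informal remark that the cross terms do not worsen the singularity.
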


\begin{proof} Since $\widetilde{K_k}$ is a smooth form on $\widetilde{S_1}$, we can write $\widetilde{K_k}$ as finite sums of forms of type
$$\Phi(x,\widetilde{y})=L(x,\widetilde{y})dx_I\wedge d\overline{x}_{I'}\wedge d\widetilde{y}_J\wedge d\overline{\widetilde{y}}_{J'},$$
where $L$ is a smooth function, $I,I',J,J'$ are subsets of $\{1,\ldots,n\}$ and 
$$dx_I=dx_{i_1}\wedge\cdots\wedge dx_{i_m}\text{ if }I=\{i_1,\ldots,i_m\}.$$

Let $v$ be the local potential of $\Theta'$ such that $\varphi=\log|\widetilde{y_1}|-v$ in the coordinate $(x,\widetilde{y})$ of $\widetilde{S_1}$. In this coordinate, we can write 
    \begin{align}\label{describe Kktilde}
        dd^c\varphi_k+\Theta' &= \Theta' + (\chi''_k\circ (\log|\widetilde{y_1}|-v))d(\log|\widetilde{y_1}|-v)\wedge d^c(\log|\widetilde{y_1}|-v)\\ \nonumber
    &+(\chi_k'\circ(\log|\widetilde{y_1}|-v))dd^c(\log|\widetilde{y_1}|)-(\chi_k'\circ(\log|\widetilde{y_1}|-v))dd^cv.
    \end{align}
     The first term $\Theta'$ is a fixed form. The second term only takes account when $\log|\widetilde{y_1}|-v\in[-k-1,-k+1]$. The third term contributes nothing since $dd^c(\log|\widetilde{y_1}|) = [\widetilde{y_1} = 0]$ and $\chi_k' \equiv 0$ in an open neighborhood of $\{\widetilde{y_1} = 0\}$. The last term is also uniformly bounded since $v$ is fixed and $|\chi_k'| \leq 1$.

    Recall that $\widetilde{K_k}=\gamma \wedge (dd^c\varphi_k+\Theta')$. We can use the above descriptions of \eqref{describe Kktilde} to control $L(x,\widetilde{y})$. Note that
    \[\partial \log |\widetilde{y_1}|=\frac{1}{2\widetilde{y_1}}d\widetilde{y_1},\
   \overline{\partial} \log |\widetilde{y_1}|=\frac{1}{2\overline{\widetilde{y_1}}}d\overline{\widetilde{y_1}}.\]
   We can then bound $L(x,\widetilde{y})$ by $1/|\widetilde{y_1}|^2$ (which takes into account when $\log|\widetilde{y_1}|-v\in[-k-1,-k+1]$). This deduces that $L(x,\widetilde{y})\lesssim e^{2k}$ on $\widetilde{S_1}$.

   We push down $\widetilde{K_k}$ to $S_1$. We see that $\pi_*(\Phi)$ is obtained from $\Phi$ by replacing $\widetilde{y_1}$ with $ y_1$, and $\widetilde{y_j}$ with $ y_j/y_1$ for $j = 2,\ldots,n$. There are at most $2k-2$ factors of the form $d(y_j/y_1) = dy_j/y_1 - y_jdy_1/y_1^2$ or their conjugate. Hence, the coefficients of $\pi_*(\Phi)$ on $S_1$ are finite sum of the type
$$L(x,y_1,y_2/y_1,\ldots,y_n/y_1)P(y)y_1^{-m}\overline{y}_1^{-p},$$
where $P$ is a homogeneous polynomial in $y$ and $\overline{y}$ (this polynomial only depends on $I,I',J,J'$) such that $\deg(P)+2n-2\geq m+p$. Recall also that on sector $S_1$, we have $|y_j| < 3|y_1|$ for $j = 2,\ldots,n$. Therefore, 
    \[H_k(0,y)\lesssim\frac{e^{2k}}{|y|^{2n-2}}\]
on $S_1$. By symmetry, the proof is complete.
\end{proof}

We also need the following lemma about bounded mean oscillation functions.

\begin{lemma}\label{Cor JohnNirenberg}\cite{JN61-bmo}
    Let $\Omega$ be a bounded domain in $\R^n$, and $u$ be a bounded mean oscillation function in $\Omega$. Let $p$ be a positive constant. Then there exists a positive constant $C$ depending only on $n,p$ such that
    \[\sup\limits_{Q}  \ \frac{1}{\Leb(Q)}\int_Q \Big|u-m_Q(u)\Big|^p\ d\Leb\leq C\|u\|^p_{\bmo(\Omega)}.\]
    Here, the supremum is taken over all the cubes $Q \subset \Omega$.
\end{lemma}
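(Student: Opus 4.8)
The plan is to deduce the estimate from the classical John--Nirenberg inequality, which is the substance of \cite{JN61-bmo}. Recall that this inequality provides dimensional constants $c_1 = c_1(n) > 0$ and $c_2 = c_2(n) > 0$ with the following property: for every cube $Q \subset \Omega$ and every $\lambda > 0$,
\[
\Leb\big(\{x \in Q:\ |u(x) - m_Q(u)| > \lambda\}\big) \leq c_1\, \exp\!\Big(-\frac{c_2\, \lambda}{\|u\|_{\bmo(\Omega)}}\Big)\, \Leb(Q),
\]
where one uses that $\|u\|_{\bmo(Q)} \leq \|u\|_{\bmo(\Omega)}$ for every cube $Q \subset \Omega$, since sub-cubes of $Q$ are sub-cubes of $\Omega$, so that applying John--Nirenberg on $Q$ produces exponential decay with rate at least $c_2/\|u\|_{\bmo(\Omega)}$. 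If $\|u\|_{\bmo(\Omega)} = 0$ then $u$ is a.e.\ constant and the conclusion is trivial, so we may assume $\|u\|_{\bmo(\Omega)} > 0$.

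Next I would fix a cube $Q \subset \Omega$ and rewrite the $L^p$ oscillation via the layer-cake formula for $L^p$ norms,
\[
\int_Q |u - m_Q(u)|^p\, d\Leb = p \int_0^{\infty} \lambda^{p-1}\, \Leb\big(\{x \in Q:\ |u(x) - m_Q(u)| > \lambda\}\big)\, d\lambda.
\]
Inserting the John--Nirenberg bound and substituting $t = c_2\lambda/\|u\|_{\bmo(\Omega)}$ gives
\[
\int_Q |u - m_Q(u)|^p\, d\Leb \leq p\, c_1\, c_2^{-p}\, \|u\|_{\bmo(\Omega)}^p\, \Leb(Q) \int_0^{\infty} t^{p-1} e^{-t}\, dt = c_1\, c_2^{-p}\, \Gamma(p+1)\, \|u\|_{\bmo(\Omega)}^p\, \Leb(Q).
\]
Dividing by $\Leb(Q)$ and taking the supremum over all cubes $Q \subset \Omega$ yields the claim with $C := c_1\, c_2^{-p}\, \Gamma(p+1)$, which depends only on $n$ and $p$.

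The only genuine input here is the John--Nirenberg inequality itself; once it is granted (hence the citation of \cite{JN61-bmo}), the remaining argument is the routine layer-cake computation above, so there is no real obstacle. The one point that deserves a word of care is the localization observation $\|u\|_{\bmo(Q)} \leq \|u\|_{\bmo(\Omega)}$, which is what makes the exponential decay rate, and hence the final bound, uniform over all cubes $Q \subset \Omega$.
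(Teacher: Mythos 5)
Your proof is correct and is the standard derivation of the $L^p$ form of the John--Nirenberg inequality from the exponential form, which is precisely what the paper's citation of \cite{JN61-bmo} invokes (the paper itself gives no proof, only the reference). Both the layer-cake computation and the localization observation $\|u\|_{\bmo(Q)} \leq \|u\|_{\bmo(\Omega)}$ are handled correctly, and the constant $C = c_1 c_2^{-p}\Gamma(p+1)$ depends only on $n$ and $p$ as required.
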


We now prove Theorem~\ref{regularize}.

\begin{proof}[Proof of Theorem~\ref{regularize}]
Let $T$ be the closed positive $(1,1)$-current such that $du\wedge d^c u \leq T$. Define $$u_k(x):=\int_{y\in X}u(y)K_k(x,y).$$
    We consider the following set
    \[A:= \{x\in X \text{ such that } x \text{ is a Lebesgue point of } u \text{ and } \nu(T,x) = 0\}.\]
    By \cite[Theorem 1.1]{Vigny-Vu-Lebesgue}, $X\setminus A$ is a pluripolar set. We claim that $u_k(x) \rightarrow u(x)$ for every $x \in A$ as $k\rightarrow \infty$. By Proposition~\ref{suppKk}, this is a local problem.

    Fix a point $x_0 \in A$ and consider a local holomorphic coordinate around $x_0$. Since $K_k(x,y) \to [\Delta]$ as $k\to \infty$, $K_k(0,y) $ will converges to the Dirac measure at $0$ as $k\to \infty$. Therefore, $\int_{y\in X} u(0) K_k(0,y) \to u(0)$ as $k\to \infty$. By Proposition~\ref{singK}, we only need to show that
    \[\lim\limits_{k\rightarrow \infty}\int_{B(0,e^{-k})}|u(x)-u(0)|\frac{e^{2k}}{|x|^{2n-2}}d\Leb(x)=0.\]
    Observe that
\begin{align*}
    \int_{B(0,e^{-k})}|u(x)-u(0)|\frac{e^{2k}}{|x|^{2n-2}}d\Leb(x)
    &\leq \int\limits_{B(0,e^{-k})}\Big|u(x)-m_{B(0,e^{-k})}(u)\Big|\frac{e^{2k}}{|x|^{2n-2}}d\Leb(x) \\&+
    \Big| m_{B(0,e^{-k})}(u)-u(0) \Big|\int_{B(0,e^{-k})}\frac{e^{2k}}{|x|^{2n-2}}d\Leb(x).
\end{align*}
The second term on the right-hand side converges to $0$ as $k\rightarrow\infty$ since $0$ is a Lebesgue point and $\displaystyle \int_{B(0,e^{-k})}\frac{e^{2k}}{|x|^{2n-2}}d\Leb(x)$ is bounded from above by a constant which only depends on $n$. To conclude, it is thus enough to show that the first term converges to $0$ as $k\rightarrow\infty$.

By H\"older's inequality, we have
\begin{align*}
    &\int_{B(0,e^{-k})}\Big|u(x)-m_{B(0,e^{-k})}(u)\Big|\frac{e^{2k}}{|x|^{2n-2}}d\Leb(x)\\ \nonumber
    &\leq e^{2k}\Big(\int_{B(0,e^{-k})}\Big|u(x)-m_{B(0,e^{-k})}(u)\Big|^{2n-1}d\Leb(x)\Big)^{\frac{1}{2n-1}}\Big(\int_{B(0,e^{-k})}\frac{1}{|x|^{2n-1}}d\Leb(x)\Big)^{\frac{2n-2}{2n-1}}\\ \nonumber
    &=C_1\Big(\frac{1}{\Leb(B(0,e^{-k}))}\int_{B(0,e^{-k})}\Big|u(x)-m_{B(0,e^{-k})}(u)\Big|^{2n-1}d\Leb(x)\Big)^{\frac{1}{2n-1}},
\end{align*}
where $C_1$ depends only on $n$.  By \cite[Proposition 6]{Vigny}, $u$ is a bounded mean oscillation function. Therefore, by Lemma~\ref{Cor JohnNirenberg} (we can take a change of variables at $0$), we have
\[ \int_{B(0,e^{-k})}\Big|u(x)-m_{B(0,e^{-k})}(u)\Big|\frac{e^{2k}}{|x|^{2n-2}}d\Leb(x)\leq C_2\|u\|_{\bmo(B(0,2e^{-k}))},\]
where $C_2$ depends only on $n$. 

For $r>0$, put $$\nu(T,x,r):= r^{2(1-n)} \int_{B(x,r)}T\wedge (dd^c |z|^2)^{n-1}.$$ 
It is known that $\nu(T,x,r)$ is an increasing function on $r$ and converges to the Lelong number $\nu(T,x)$ of $T$ at $x$ as $r\to 0^+$. By the definition of $\bmo$-norm and the proof of \cite[Proposition 6]{Vigny}, we have
$$\|u\|_{\bmo(B(0,2e^{-k}))}\leq C_3 \sup\limits_{B(y,s)\subset B(0,2e^{-k})}\sqrt{\nu(T,y,s)}\leq C_4 \sqrt{\nu\Big(T,0,4e^{-k}\Big)},$$
where $C_3,C_4$ depend only on $n$. This deduces the claim since we have $\nu(T,0)=0$.

By \cite[Theorem 10]{Vigny}, we can choose
\[T_k(x):= \|K_k(x,\cdot)\|_{L^1} \int_{y\in X}  (K_k^+(x,y) + K_k^-(x,y)) \wedge T(y).\]
Then $T_k$ is a closed positive $L^{1+\delta}$-form ($\delta$ can choose depending on $n$, see \cite[Lemma 2.1]{DS_regula}) such that $\|T_k\|_X \leq C_5 \|T\|_X$ ($C_5$ depends only on $X,\omega$) and $du_k \wedge d^c u_k \leq T_k$.

To get more regularity of $u_k$ and $T_k$, we need to apply this process several times. Thus, we need to show that pointwise convergence is preserved under this process. Define
\[A_k:= \{x\in X \text{ such that } x \text{ is a Lebesgue point of } u_k \text{ and } \nu(T_k,x) = 0\}.\]
We only need to show that $A\subset A_k$ for every $k$. This is clear as $K_k^+(x,\cdot),K_k^-(X,\cdot)$ are $L^1$-integrable functions.

Applying this process several times, we get a sequence of $\mathscr{C}^1$ functions $(u_k)$ and a sequence of closed positive $\mathscr{C}^1$ $(1,1)$-form $T_k$ such that $du_k\wedge d^c u_k \leq T_k$ (see \cite[Lemma 2.1]{DS_regula}). Let $\varepsilon_k$ be a sequence decreases to $0$. Then for each $k$ we can pick a smooth function $\widetilde{u_k}$ and a closed smooth $(1,1)$-form $\widetilde{T_k}$ such that 
\[\|\widetilde{u_k} - u_k\|_{\mathscr{C}^0} \leq \varepsilon_k, \quad d \widetilde{u_k}\wedge d^c \widetilde{u_k} \leq du_k \wedge d^c u_k + \varepsilon_k \omega,  \quad T_k -\varepsilon_k\omega \leq \widetilde{T_k} \leq  T_k +\varepsilon_k\omega .\]
We infer that $d \widetilde{u_k}\wedge d^c \widetilde{u_k} \leq du_k \wedge d^c u_k + \varepsilon_k \omega \leq \widetilde{T_k} + 2 \varepsilon_k\omega$. This is our desired sequence.

When $u$ is bounded, we can bound $\|u_k\|_{L^\infty}$ by $\|K_k\|_{L^1} \|u\|_{L^\infty}$. Note that by Proposition~\ref{singK} and since 
\[ \int_{B(0,e^{-k})}\frac{e^{2k}}{|x|^{2n-2}}d\Leb(x)\] 
is bounded from above by a constant which only depends on $n$, we can bound uniformly $\|K_k\|_{L_1}$. The proof is complete.
\end{proof}

\section{Integration by parts formula}\label{intbypart}

In this section, we will establish a new integration by parts formula for functions in the complex Sobolev space, generalization of results in \cite{Vu-diameter} to the big cohomology class setting. 
 Let $\theta$ be a closed smooth $(1,1)$-form in a big cohomology class.  We denote by $\PSH(X,\theta)$ the set of $\theta$-psh functions. Let $\theta_1,\ldots,\theta_m$ be closed smooth $(1,1)$-forms in big cohomology classes and $\varphi_j \in \PSH(X,\theta_j)$ for $j=1,\ldots, m$. We denote by
\[(\theta_1 + dd^c \varphi_1) \wedge \cdots \wedge (\theta_m + dd^c \varphi_m)\]
the \textit{non-pluripolar product} of closed positive $(1,1)$-currents $\theta_j + dd^c \varphi_j$ for $j = 1,\ldots,m$. In this paper, we always consider non-pluripolar products of currents.
It is known that the sequence of positive currents
\[\mathbf{1}_{\bigcap\{\varphi_j > V_{\theta_j}-k\}} (\theta_1 + dd^c \max(\varphi_1,V_{\theta_1}-k)) \wedge \cdots \wedge (\theta_m + dd^c \max(\varphi_m,V_{\theta_m}-k)\]
is increasing to $(\theta_1 + dd^c \varphi_1) \wedge \cdots \wedge (\theta_n + dd^c \varphi_n)$. We refer the reader to \cite{BEGZ,Viet-generalized-nonpluri} for more properties of non-pluripolar products. Recall that a dsh function is a difference of two quasi-psh function (see \cite{DS_tm}).
Here is our main result of this section.

\begin{theorem}\label{integrationbypartdsh}
Let $(X,\omega)$ be a compact K\"ahler manifold of dimension $n$. Let $u,v_1,\ldots,v_m$ be bounded functions in $W^{1,2}_*(X)$. Let $\theta_1,\ldots,\theta_{n-1}$ be closed $(1,1)$-forms in big cohomology classes. Let $\varphi_j \in \PSH(X,\theta_j)$ for $j=1,\ldots,n-1$. Define
\begin{equation}\label{define R}R :=  (\theta_1 + dd^c \varphi_1 )\wedge \cdots \wedge ( \theta_{n-1} + dd^c \varphi_{n-1} ) .\end{equation}
Let $\psi$ be a bounded dsh function and $\rho$ be a smooth function on $\R^m$. Then we have
    \begin{align}\label{intbypart formula} \int_X  \rho(v_1,\ldots,v_m)du\wedge d^c \psi \wedge R &= 
-\sum_{j=1}^m \int_X u\partial_j \rho(v_1,\ldots,v_m)  d v_j \wedge d^c \psi \wedge R \\ \nonumber
&-\int_X u  \rho(v_1,\ldots,v_m)  dd^c \psi \wedge R.
\end{align}
\end{theorem}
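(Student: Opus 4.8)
The plan is to reduce the integration by parts formula to the smooth setting via multiple regularizations, controlling everything through the complex Sobolev norm bounds of Theorem~\ref{regularize}. First I would fix notation: write $R_k$ for a regularization of the current $R$ obtained by replacing each $\varphi_j$ with $\max(\varphi_j, V_{\theta_j}-k)$ (so that locally the potentials become bounded and the non-pluripolar product agrees with the Bedford--Taylor product on the set $\bigcap\{\varphi_j > V_{\theta_j}-k\}$), and then further smoothing if needed. Simultaneously I would apply Theorem~\ref{regularize} to $u, v_1, \ldots, v_m$ to get smooth sequences $u^{(\ell)}, v_j^{(\ell)}$ with $du^{(\ell)}\wedge d^c u^{(\ell)} \le T^{(\ell)}$, $\|T^{(\ell)}\|_X \le C\|T\|_X$, uniformly bounded since $u, v_j$ are bounded, and converging pointwise off a pluripolar set; likewise I would approximate the bounded dsh function $\psi = \psi_1 - \psi_2$ (difference of quasi-psh) by smooth $\psi^{(\ell)}$, say via a standard decreasing regularization of each $\psi_i$, keeping $dd^c\psi^{(\ell)}$ bounded in mass and $d\psi^{(\ell)}\wedge d^c\psi^{(\ell)}$ dominated by a positive current of controlled mass.

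For the smooth approximants, \eqref{intbypart formula} is just the ordinary Leibniz rule plus Stokes' theorem applied to $d\big(u^{(\ell)}\rho(v_1^{(\ell)},\ldots,v_m^{(\ell)}) d^c\psi^{(\ell)} \wedge R_k\big)$ (here $R_k$ is closed, so it passes through the exterior derivative), giving the identity with $R$ replaced by $R_k$ and all functions replaced by their smooth versions. The remaining work is to pass to the limit. I would first send the smoothing parameter $\ell \to \infty$ with $k$ fixed: on the open set where the potentials $\max(\varphi_j, V_{\theta_j}-k)$ are bounded, $R_k$ is a Bedford--Taylor product of currents with bounded potentials, and the key tool is Corollary~\ref{convergence}, which gives $L^1$-convergence of $u^{(\ell)} \to u$, $v_j^{(\ell)} \to v_j$ against the (non-pluripolar, finite) trace measures of $R_k$, $d\psi\wedge d^c\psi\wedge R_k$, etc. The Cauchy--Schwarz-type bounds coming from $du^{(\ell)}\wedge d^c u^{(\ell)} \le T^{(\ell)}$ with uniformly bounded mass, together with uniform $L^\infty$ bounds and continuity of $\rho$ and its derivatives, let me control the gradient terms $du^{(\ell)}\wedge d^c\psi^{(\ell)}\wedge R_k$ and $dv_j^{(\ell)}\wedge d^c\psi^{(\ell)}\wedge R_k$ and pass to the limit. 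This yields \eqref{intbypart formula} with $R$ replaced by $R_k$ (and now using good representatives of $u, v_j, \psi$). Then I would send $k \to \infty$: the truncated products $R_k$ increase to $R = \langle\cdot\rangle$ by the monotonicity of non-pluripolar products recalled before the theorem statement, and since $u, v_j, \psi$ are bounded and $\rho$, $\partial_j\rho$ are bounded on the range, monotone/dominated convergence handles the terms without derivatives, while the gradient terms are handled by the uniform complex-Sobolev estimates (the masses $\int d u\wedge d^c u\wedge R_k \le \int T\wedge R_k$ stay bounded because $\int_X T\wedge R_k \le \int_X T\wedge\langle R\rangle'$-type cohomological bounds hold, or more carefully by plurifine locality of non-pluripolar products).

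The main obstacle, as the authors themselves flag, is that the non-pluripolar product $du\wedge d^c\psi\wedge R$ is not a priori a well-defined object when the $\varphi_j$ have unbounded potentials — the integrand in \eqref{intbypart formula} must be interpreted through the increasing limit of the truncated products, and one must check that all the relevant currents (the mixed gradient currents $du\wedge d^c v_j\wedge R$, the Hessian current $dd^c\psi\wedge R$ wedged with bounded functions) are of \emph{locally finite mass} and that the truncation limits are independent of the choices, using plurifine locality. Getting the $\ell\to\infty$ and $k\to\infty$ limits to commute appropriately — i.e. establishing the needed uniform-in-$k$ mass bounds on $du^{(\ell)}\wedge d^c\psi^{(\ell)}\wedge R_k$ so that a diagonal extraction works — is where the bulk of the technical care goes; this is exactly the step where Theorem~\ref{regularize} (with its quantitative control $\|T_k\|_X \le C\|T\|_X$ and the preservation of boundedness) is indispensable, since the weaker weak-convergence statement of Theorem~\ref{regularizeVig} would not allow testing against the singular measures attached to $R_k$.
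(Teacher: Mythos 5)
Your high-level plan --- regularize everything via Theorem~\ref{regularize}, apply Stokes to the smooth approximants, then pass to the limit using the quantitative mass control and Corollary~\ref{convergence} --- does overlap with the first step of the paper's argument, but the concrete decomposition you choose diverges from the paper's, and there is a real gap in your treatment of the dsh function $\psi$. The paper deliberately keeps the non-pluripolar product $R$ fixed and global throughout (this is the whole point flagged after the theorem statement); it never truncates the $\varphi_j$. Instead it splits into two steps: (i) $\psi$ a bounded \emph{quasi-psh} function (Theorem~\ref{intbypastpsh}), settled by regularizing only $u,v_1,\dots,v_m$ via Theorem~\ref{regularize} and passing to the limit using Lemmas~\ref{duwedgeR}--\ref{generalconvergence}; and (ii) $\psi=\varphi_1-\varphi_2$ a bounded dsh function with the $\varphi_i$ possibly \emph{unbounded}, handled via the cutoffs $\psi_k=\max(\varphi_1,-k)-\max(\varphi_2,-k)$ and $\phi_k=k^{-1}\max(\varphi_1+\varphi_2,-k)+1$ from \cite{Viet-convexity-weightedclass} together with Lemmas~\ref{convergencenonpluri} and~\ref{key}, finishing by applying step (i) to $\widetilde\rho=\rho\cdot\phi_k$.

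The genuine gap is your handling of $\psi$. A bounded dsh $\psi$ is a difference of quasi-psh functions which may both be unbounded, and ``standard decreasing regularization of each $\psi_i$'' produces smooth quasi-psh approximants whose difference need not stay bounded and whose Hessian currents $dd^c\psi^{(\ell)}\wedge R_k$ converge only \emph{weakly}. Since $u\,\rho(v_1,\dots,v_m)$ is merely bounded Borel, weak convergence is insufficient to pass to the limit in $\int u\,\rho\,dd^c\psi^{(\ell)}\wedge R_k$; the $\phi_k,\psi_k$ machinery in the paper is designed precisely to circumvent this, and your scheme has no replacement for it. A secondary problem with truncating $R$: without the indicator $\mathbf{1}_{\bigcap\{\varphi_j>V_{\theta_j}-k\}}$ the sequence $R_k$ is not monotone in $k$, so monotone/dominated convergence does not justify the $k\to\infty$ step; with the indicator, $\mathbf{1}R_k$ is no longer closed and Stokes for the smooth approximants fails. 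You would essentially have to re-derive the truncation lemmas that appear for other purposes in Section~\ref{section CS} (Lemma~\ref{converge3}, Corollary~\ref{converge4}), and then still reconcile your $R_k$-limits with the paper's definitions of $du\wedge d^c\psi\wedge R$, $dv_j\wedge d^c\psi\wedge R$ and $dd^c\psi\wedge R$, which are given directly as non-pluripolar objects against the un-truncated $R$, not as limits of $R_k$-products.
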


We will define the product $d v\wedge \dc \psi \wedge R$  below. 
We now comment on the proof of Theorem~\ref{integrationbypartdsh}. In \cite{Vu-diameter}, the author proved \eqref{intbypart formula} in the case when $\psi$ is a \textit{bounded quasi-psh function} and $R$ is a product of closed positive $(1,1)$-currents with bounded potential. The idea was to prove the local case first (due to the good approximations in the local setting) and then pass to the global case using a standard gluing argument. However, since the non-pluripolar product is not always well-defined in the local case, we cannot use the same argument in our setting. Therefore, we work directly on the global case (in which the non-pluripolar product is always well-defined). 

We will prove Theorem~\ref{integrationbypartdsh} through several approximation steps. Note that \eqref{intbypart formula} is known when $u,v_1,\ldots,v_m$ are smooth functions and $\psi$ is a bounded quasi-psh function. For the \textit{first step}, we assume that $\psi$ is a bounded quasi-psh function. We then prove convergence lemmas using the sequence constructed in Theorem~\ref{regularize} for $u$ and $v_1,\ldots,v_m$ (see Lemma~\ref{generalconvergence}). A key note is that all the measures we consider are non-pluripolar which is why Theorem~\ref{regularize} is useful (see Corollary~\ref{convergence}). This allows us to prove \eqref{intbypart formula} in the case when $\psi$ is a bounded quasi-psh function (see Theorem~\ref{intbypastpsh}). For the \textit{second step}, we will approximate $\psi$ by dsh functions $\psi_k$ which are difference of bounded quasi-psh functions (see Lemma~\ref{key}).

We now go into details. Let us begin with some definitions. Let $u$ be a \textit{bounded} function in $W^{1,2}_*(X)$. Let $\theta_1,\ldots,\theta_{n-1}$ be closed $(1,1)$-forms in big cohomology classes. Let $\varphi_j \in \PSH(X,\theta_j)$ for $j=1,\ldots,n-1$. Let $R = (\theta_1 + dd^c \varphi_1 )\wedge \cdots \wedge ( \theta_{n-1} + dd^c \varphi_{n-1} )$. For every smooth $1$-form $\Phi$, we put
\[\left \langle du\wedge R,\Phi\right \rangle : = - \int_X u d\Phi \wedge R.\]
Since $d\Phi \wedge R$ is a non-pluripolar measure on $X$ and a good representative is well-defined modulo a pluripolar set, the above integral makes sense. Moreover, as $u$ is a bounded function, $du\wedge R$ is a well-defined current on $X$. We define $\partial u \wedge R$ to be the $(n,n-1)$-current such that $\left \langle \partial u \wedge R,\Phi \right \rangle = \left \langle d u \wedge R,\Phi \right \rangle$ for every smooth $(0,1)$-form $\Phi$, $\overline{\partial} u \wedge R$ to be the $(n-1,n)$-current such that $\left \langle \overline{\partial} u \wedge R,\Phi \right \rangle = \left \langle d u \wedge R,\Phi \right \rangle$ for every smooth $(1,0)$-form $\Phi$. We have $du\wedge R = \partial u \wedge R+\overline{\partial} u\wedge R$. 

Consider a finite cover of $X$ by local charts $\{\mathcal{U}_\alpha\}_\alpha$, and fix a partition of unity $\{\chi_\alpha\}_\alpha$ for $\{\mathcal{U}_\alpha\}_\alpha$. For a form $\Phi$ on $X$ and $\beta \geq 0$, we denote by $\|\Phi\|_{\mathscr{C}^\beta}$ the sum of $\mathscr{C}^\beta$-norms of the coefficients of $\Phi$ on this fixed atlas.

\begin{lemma}\label{duwedgeR} Let $(u_k)_k$ be the sequence in Theorem~\ref{regularize} associated to $u$. Then $du_k \wedge R \rightarrow du\wedge R$ weakly as currents. We have similar convergences for $\partial u$ and $\overline{\partial} u$. In particular, $du\wedge R$, $\partial u \wedge R$, $\overline{\partial} u \wedge R$ are currents of order $0$.
\end{lemma}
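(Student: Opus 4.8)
The strategy is to reduce the weak convergence of $du_k \wedge R$ to the corresponding statement for test forms via the defining identity $\langle du \wedge R, \Phi\rangle = -\int_X u\, d\Phi \wedge R$. Fix a smooth $1$-form $\Phi$ on $X$. Since $d\Phi$ is a smooth $2$-form, we may write $d\Phi \wedge R$ (and, after replacing $\Phi$ by the relevant pieces, each of $\partial\Phi\wedge R$, $\dbar\Phi\wedge R$) against the non-pluripolar product $R$; these are finite signed measures on $X$ with no mass on pluripolar sets, and the total variation is controlled by $\|\Phi\|_{\mathscr{C}^2}$ times a constant depending on $R$ only. So the claim reduces to
\[
\int_X u_k\, d\Phi \wedge R \longrightarrow \int_X u\, d\Phi \wedge R,
\]
and analogous statements for the $\partial$ and $\dbar$ components.

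\textbf{Main step.} The key input is Corollary~\ref{convergence} applied to the measure $\mu$ given by the (total variation of the) signed non-pluripolar measure $d\Phi\wedge R$: writing $d\Phi\wedge R = \mu_+ - \mu_-$ with $\mu_\pm$ non-pluripolar finite positive measures, and noting that by Theorem~\ref{regularize} the functions $u_k$ are uniformly bounded (as $u$ is bounded) and converge to $u$ outside a pluripolar set, we get
\[
\Bigl| \int_X (u_k - u)\, d\Phi\wedge R \Bigr| \leq \int_X |u_k - u|\, d\mu_+ + \int_X |u_k - u|\, d\mu_- \longrightarrow 0
\]
by Corollary~\ref{convergence} applied to $\mu_+$ and $\mu_-$ separately. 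This proves $du_k\wedge R \to du\wedge R$ weakly; since the same argument works verbatim with $d\Phi$ replaced by $\partial\Phi$ (for $\Phi$ a smooth $(0,1)$-form) and by $\dbar\Phi$ (for $\Phi$ a smooth $(1,0)$-form), we obtain the stated convergences for $\partial u\wedge R$ and $\dbar u\wedge R$ as well, and $du\wedge R = \partial u\wedge R + \dbar u\wedge R$ follows by linearity from the smooth approximants.

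\textbf{Order-$0$ claim.} For the last assertion, observe that from the identity $\langle du\wedge R, \Phi\rangle = -\int_X u\, d\Phi\wedge R$ we get the bound $|\langle du\wedge R, \Phi\rangle| \leq \|u\|_{L^\infty}\, \|d\Phi \wedge R\|_{TV} \leq C\, \|u\|_{L^\infty}\, \|R\|_X\, \|\Phi\|_{\mathscr{C}^1}$ using a fixed partition of unity, where the comparison $\|d\Phi\wedge R\|_{TV} \leq C\|\Phi\|_{\mathscr{C}^1}\|R\|_X$ holds because locally $R$ is a positive current and $d\Phi$ a smooth form. A bound of the form $|\langle S, \Phi\rangle| \leq C\|\Phi\|_{\mathscr{C}^0}$, however, is exactly the statement that $S$ has order $0$; to get it from the $\mathscr{C}^1$-bound one integrates by parts once more, rewriting $\int_X u\, d\Phi\wedge R$ — but $u$ is only Sobolev, so instead one argues directly: $du\wedge R$ acts on $\Phi$ only through $d\Phi$, and a smooth $2$-form paired against the positive measure-type current $R$ with the bounded weight $u$ has total mass controlled by $\|\Phi\|_{\mathscr{C}^0}$ after noting $\Phi \mapsto d\Phi\wedge R$ is, coefficient-wise, bounded by $\|\Phi\|_{\mathscr{C}^1}$; more precisely one uses that each $du_k\wedge R$ is genuinely a current of order $0$ with mass bounded uniformly in $k$ by $\|u_k\|_{L^\infty}\|T_k\|_X^{1/2}\|R\|_X^{1/2} \leq C$ (via Cauchy–Schwarz $du_k\wedge d^c u_k \leq T_k$ and the uniform bounds of Theorem~\ref{regularize}), whence the weak limit $du\wedge R$ is also of order $0$ with the same mass bound. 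The same reasoning gives the order-$0$ property for $\partial u\wedge R$ and $\dbar u\wedge R$.

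\textbf{Expected obstacle.} The only delicate point is justifying that the signed measure $d\Phi\wedge R$ (and its $\partial$, $\dbar$ pieces) is well-defined and non-pluripolar, since $R$ is a non-pluripolar product of currents with possibly unbounded potentials: one must check that pairing the \emph{smooth} form $d\Phi$ against $R$ causes no issue — which is immediate since $R$ itself is a well-defined positive measure-coefficiented current on all of $X$ — and that it carries no mass on pluripolar sets, which is inherited from $R$. Once this is in place, everything reduces cleanly to Corollary~\ref{convergence}, and the uniform mass bounds on $du_k\wedge R$ coming from Theorem~\ref{regularize} close the order-$0$ statement.
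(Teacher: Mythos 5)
Your argument for the weak convergence $du_k\wedge R\to du\wedge R$ is essentially the paper's: reduce to $\int_X(u_k-u)\,d\Phi\wedge R\to 0$ and invoke Corollary~\ref{convergence} on the (non-pluripolar) total variation of $d\Phi\wedge R$. That part is fine.

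The order-$0$ assertion, however, has a real gap. You correctly note that the trivial estimate $|\langle du\wedge R,\Phi\rangle|\leq\|u\|_{L^\infty}\|d\Phi\wedge R\|_{TV}\lesssim\|\Phi\|_{\mathscr{C}^1}$ only gives order $\leq 1$, and you correctly identify the Cauchy--Schwarz inequality $du_k\wedge d^c u_k\leq T_k$ as the right tool for improving it. But the Cauchy--Schwarz step only bounds pairings of the very special form
\[
\Big|\int_X i\partial u_k\wedge\overline\partial v\wedge R\Big|\leq\Big(\int_X T_k\wedge R\Big)^{1/2}\Big(\int_X i\partial v\wedge\overline\partial v\wedge R\Big)^{1/2},
\]
for a \emph{smooth function} $v$ with $i\partial v\wedge\overline\partial v\leq\omega$, not pairings $\langle\partial u_k\wedge R,\Phi\rangle$ with an arbitrary smooth $(0,1)$-form $\Phi$. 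You skip the bridge between these two: one has to write $\Phi$ locally as $\sum_j f_{\alpha,j}\,\overline\partial z_j$, globalize the coordinate functions via a partition of unity to $v_j=\sum_\alpha\chi_\alpha z_j$, and express $\Phi=\sum_{\alpha,j}f_{\alpha,j}(\overline\partial v_j-z_j\overline\partial\chi_\alpha)$ with uniform control $i\partial v_j\wedge\overline\partial v_j\lesssim\omega$, $i\partial\chi_\alpha\wedge\overline\partial\chi_\alpha\lesssim\omega$, $|f_{\alpha,j}|\leq\|\Phi\|_{\mathscr{C}^0}$. Only then does Cauchy--Schwarz yield $|\langle\partial u\wedge R,\Phi\rangle|\lesssim\|\Phi\|_{\mathscr{C}^0}$, which is the order-$0$ statement. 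The sentence in your proof asserting that ``a smooth $2$-form paired against the positive measure-type current $R$ with the bounded weight $u$ has total mass controlled by $\|\Phi\|_{\mathscr{C}^0}$'' does not follow from the surrounding claims and is precisely the point that needs the partition-of-unity construction. Also, the claimed uniform mass bound $\|u_k\|_{L^\infty}\|T_k\|_X^{1/2}\|R\|_X^{1/2}$ has a spurious factor $\|u_k\|_{L^\infty}$: the Cauchy--Schwarz bound on $du_k\wedge R$ involves only $\|T_k\|_X^{1/2}\|R\|_X^{1/2}$ (times a constant from the coordinate/cut-off construction), and the $L^\infty$-norm of $u_k$ is not what controls the mass of a current built from $du_k$.
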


\begin{proof} We argue similarly to \cite[Lemma 2.8]{Vu-diameter}. Let $\Phi$ be a $1$-form on $X$. By definition, we have
\[|\left \langle d u_k \wedge R - d u\wedge R,\Phi \right \rangle| = \Big| \int_{X} (u_k - u)R\wedge d\Phi \Big|\lesssim \|\Phi\|_{\mathscr{C}^1}\int_{X} |u_k - u| R\wedge \omega.\]
Here $\lesssim$ depends only on $X,\omega$ and $\{\mathcal{U}_\alpha\}_\alpha$. Since $R $ is non-pluripolar, $R\wedge \omega$ is a non-pluripolar measure. Thus, by Corollary~\ref{convergence},
\[|\left \langle d u_k \wedge R - d u\wedge R,\Phi \right \rangle| \rightarrow 0 \text{ as }k\rightarrow \infty.\]
By considering $\Phi$ as $(0,1)$ and $(1,0)$ form, we get the same conclusion for $\partial u \wedge R$ and $\overline{\partial} u \wedge R$. The first part follows.

Now, we prove the second part. We have $i\partial u_k \wedge \overline{\partial} u_k \leq T_k$ on $X$ where $T_k$ is a closed positive $(1,1)$-form and $\|T_k\|_X \leq C_1 \|T\|_X$ for some constant $C_1$ that does not depend on $u$ and $k$. Let $v$ be a smooth function on $X$ such that $i\partial v\wedge \overline{\partial} v \leq \omega$. By Cauchy-Schwarz inequality, we have
\begin{align*}\Big|\int_{X} i\partial u_k \wedge \overline{\partial} v \wedge R \Big| &\leq \Big(\int_{X} i\partial u_k \wedge \overline{\partial} u_k \wedge R\Big)^{1/2} \cdot \Big(\int_{X} i\partial v \wedge \overline{\partial} v \wedge R\Big)^{1/2} \\
&\leq \Big(\int_{X} T_k \wedge R\Big)^{1/2} \|R\|_X^{1/2} 
\lesssim \|T_k\|_X^{1/2} \|R\|_X^{1/2}
\leq C_1^{1/2} \|T\|_X^{1/2} \|R\|_X^{1/2}.\end{align*}
Here, $\lesssim$ depends only on $\theta_1,\ldots,\theta_{n-1}$. We note that at this point, we bound these forms by some constant times $\omega$ and use the monotonicity of the non-pluripolar product (see, e.g., \cite[Theorem 1.1]{Lu-Darvas-DiNezza-mono}) to obtain:
\[\int_X T_k \wedge R \leq \int_X T_k \wedge (C\omega + dd^c \varphi_1) \wedge \cdots \wedge (C\omega+dd^c\varphi_{n-1}) \leq \int_X T_k \wedge (C\omega)^{n-1} = C^{n-1} \|T_k\|_X.\]
Letting $k\rightarrow \infty$, we get the bound
\begin{equation}\label{bound partial u wedge partial bar v R}\Big|\int_{X} i\partial u \wedge \overline{\partial} v \wedge R \Big| \leq C_2,\end{equation}
where $C_2$ is a constant that does not depend on $v$.

Let $\Phi$ be a $(0,1)$-form on $X$. Write $ \Phi|_{\mathcal{U}_\alpha} = \sum_{j=1}^n f_{\alpha,j} \overline{\partial}z_j $ where $|f_{\alpha,j}| \leq \|\Phi|_{\mathcal{U}_\alpha}\|_{\mathscr{C}^0} \leq \|\Phi\|_{\mathscr{C}^0}$. Put $v_j = \sum_\alpha \chi_\alpha z_j$. Then $v_j$ is a well-defined smooth function on $X$. We have
\[\Phi = \sum_\alpha \sum_{j=1}^n\chi_\alpha f_{\alpha,j} \overline{\partial} z_j  =\sum_\alpha \sum_{j=1}^n f_{\alpha,j} (\overline{\partial} v_j - z_j \overline{\partial} \chi_\alpha).\]
Since $\partial v_j = z_j \partial \chi_\alpha + \chi_\alpha \partial z_j$ and $\overline{\partial} v_j = z_j \overline{\partial} \chi_\alpha + \chi_\alpha \overline{\partial} z_j$ on $\mathcal{U_\alpha}$, we can bound $i\partial v_j \wedge \overline{\partial} v_j \leq C_3 \omega$ by some constant $C_3$ depending only on $\{\mathcal{U}_\alpha\}_\alpha$. Moreover, we can bound $|z_j|$ by some constant $C_4$ and $i\partial \chi_\alpha \wedge \overline{\partial} \chi_\alpha \leq C_5 \omega$ for some constant $C_5$. Both $C_4$ and $C_5$ depend only on $\{\mathcal{U}_\alpha\}_\alpha$. Using \eqref{bound partial u wedge partial bar v R}, we get
\[
\Big|\int_{X} i\partial u \wedge \Phi \wedge R \Big| \lesssim \|\Phi\|_{\mathscr{C}^0}
\]
where $\lesssim$ does not depend on $\Phi$. Thus, $\partial u \wedge R$ is a current of order $0$. Similarly, $\overline{\partial} u \wedge R$ is also a current of order $0$. The proof is complete.
\end{proof}

\subsection{First step} Let $\psi$ be a \textit{bounded} non-negative quasi-psh function on $X$. Let $C_\psi>0$ such that $\psi$ is a $C_\psi\omega$-psh function.  We define $du \wedge d^c \psi \wedge R = d\psi \wedge d^c u \wedge R$ by
\[\left \langle du \wedge d^c \psi \wedge R,\chi \right \rangle := -\int_X u d\chi \wedge d^c \psi \wedge R - \int_X u \chi dd^c \psi \wedge R\]
for smooth function $\chi$ on $X$. When $u$ is smooth, this notion coincides with the usual wedge product by classical integration by parts formula.

This is a well-defined current since $u$ is a bounded function and $\psi$ is a bounded quasi-psh function (which implies that both $d\chi \wedge d^c \psi \wedge R$ and $\chi dd^c \psi \wedge R$ are non-pluripolar measures).

\begin{lemma}\label{dudcwedgeR}
    Let $(u_k)_k$ be the sequence in Lemma~\ref{regularize} associated to $u$. Then $du_k \wedge d^c \psi \wedge R \rightarrow du \wedge d^c \psi\wedge R$ weakly as currents. In particular, $du\wedge d^c \psi\wedge R$ is a current of order $0$ and hence a signed measure.
\end{lemma}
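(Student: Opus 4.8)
The plan is to test the convergence against an arbitrary smooth function $\chi$ and reduce everything to already-established convergences of non-pluripolar masses. By definition,
\[
\left\langle du_k \wedge d^c \psi \wedge R, \chi \right\rangle = -\int_X u_k\, d\chi \wedge d^c \psi \wedge R - \int_X u_k\, \chi\, dd^c \psi \wedge R,
\]
and the same identity holds for $u$ in place of $u_k$ (this is the definition of $du\wedge d^c\psi\wedge R$). So it suffices to show that $\int_X u_k\, d\chi \wedge d^c \psi \wedge R \to \int_X u\, d\chi \wedge d^c \psi \wedge R$ and $\int_X u_k\,\chi\, dd^c\psi\wedge R \to \int_X u\,\chi\, dd^c\psi\wedge R$. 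For the second term, note that $\psi$ being a bounded quasi-psh function means $dd^c\psi = (\theta'+dd^c\psi) - \theta'$ where $\theta'$ is a smooth form and $\theta'+dd^c\psi$ is a closed positive current with bounded potential; hence $\chi\, dd^c\psi\wedge R$ is a difference of non-pluripolar finite Borel measures. Since the $(u_k)$ are uniformly bounded (here we use that $u$ is bounded, so Theorem \ref{regularize} yields uniformly bounded $u_k$) and $u_k \to u$ outside a pluripolar set, Corollary \ref{convergence} applied to each of these measures gives the convergence of the second term.

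For the first term, the difficulty is that $d\chi\wedge d^c\psi\wedge R$ is not a measure of a single sign and, more seriously, involves $d^c\psi$ which is not a coefficient-bounded object. The plan here is to integrate by parts once more to move the derivative off $\psi$: writing things out in terms of $\partial,\overline\partial$, one has, for smooth $u_k$ and bounded quasi-psh $\psi$,
\[
\int_X u_k\, d\chi \wedge d^c \psi \wedge R = -\int_X \psi\, d(u_k\, d^c\chi) \wedge R = -\int_X \psi\, du_k \wedge d^c\chi \wedge R - \int_X \psi\, u_k\, dd^c\chi\wedge R,
\]
valid since $R$ is closed and all products are non-pluripolar (with $\psi$ bounded). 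Both terms on the right are now controlled: $dd^c\chi\wedge R$ is a difference of non-pluripolar measures with $\psi$ bounded, so Corollary \ref{convergence} handles the last term just as above; and $du_k\wedge d^c\chi\wedge R = du_k\wedge R \wedge d^c\chi$ converges by Lemma \ref{duwedgeR} (the $(n-1,n)$ and $(n,n-1)$ parts $\partial u_k\wedge R$, $\overline\partial u_k\wedge R$ converge weakly as currents of order $0$, and $\psi\, d^c\chi$ is a bounded Borel $1$-form against which order-zero currents can be tested after a further standard regularization of $\psi\, d^c\chi$ — or, more cleanly, one expands $\psi\, d^c\chi$ and uses that $\psi$ times a smooth form is a bounded coefficient and the currents $\partial u_k\wedge R$ have uniformly bounded mass by the Cauchy–Schwarz bound \eqref{bound partial u wedge partial bar v R} in the proof of Lemma \ref{duwedgeR}). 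Passing to the limit in both displayed formulas, the limit of $\left\langle du_k\wedge d^c\psi\wedge R,\chi\right\rangle$ equals $\left\langle du\wedge d^c\psi\wedge R,\chi\right\rangle$, which is the desired weak convergence.

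The main obstacle is the term $\int_X \psi\, du_k\wedge d^c\chi\wedge R$: one must justify testing the order-zero currents $\partial u_k\wedge R$ (resp. $\overline\partial u_k\wedge R$) against the merely bounded Borel form $\psi\, d^c\chi$ and interchanging this with the limit $k\to\infty$. The clean way is a two-stage approximation: first regularize $\psi$ by a decreasing sequence of bounded smooth functions $\psi^{(\ell)}$ (possible since $\psi$ is quasi-psh, via $\max$-regularization followed by smoothing), prove the identity with $\psi^{(\ell)}$ in place of $\psi$ where all forms are smooth and Lemma \ref{duwedgeR} applies directly, and then let $\ell\to\infty$ using dominated convergence for the non-pluripolar measures $|du_k\wedge d^c\chi\wedge R|$-a.e. (uniformly in $k$, via the uniform mass bound). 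Once the weak convergence is established, the ``in particular'' follows: the uniform bound $\|du_k\wedge d^c\psi\wedge R\|$ coming from the Cauchy–Schwarz estimate and the uniform control $\|T_k\|_X\le C\|T\|_X$, $\|\chi\|_{\mathscr C^2}$-type bounds show the limit current has finite mass against $\mathscr C^0$ test functions, hence is a signed measure.
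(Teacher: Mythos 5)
The paper's proof is much more direct than yours: it simply observes that, because $\psi$ is a bounded quasi-psh function, the $(n,n)$-currents $d\chi\wedge d^c\psi\wedge R$ and $\chi\,dd^c\psi\wedge R$ are \emph{already} signed measures whose total variations put no mass on pluripolar sets (this is asserted in the paragraph defining $du\wedge d^c\psi\wedge R$, and follows from Bedford--Taylor type theory adapted to non-pluripolar products, e.g.\ via Cauchy--Schwarz: $|d\chi\wedge d^c\psi\wedge R|\le\frac{\lambda}{2}\,d\chi\wedge d^c\chi\wedge R+\frac{1}{2\lambda}\,d\psi\wedge d^c\psi\wedge R$). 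With that, $|\langle d(u_k-u)\wedge d^c\psi\wedge R,\chi\rangle|\le\int_X|u_k-u|\,|d\chi\wedge d^c\psi\wedge R|+\int_X|u_k-u|\,|\chi|\,|dd^c\psi\wedge R|$, and both terms go to $0$ directly by Corollary~\ref{convergence}. No integration by parts in $\chi$ is needed. Your concern that ``$d\chi\wedge d^c\psi\wedge R$ involves $d^c\psi$ which is not a coefficient-bounded object'' is the thing you missed: what matters is that the \emph{product} is a non-pluripolar measure, not that $d^c\psi$ itself have bounded coefficients.

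Your extra integration by parts creates the obstacle you then have to fight: you need to test the order-zero currents $\partial u_k\wedge R$, $\overline\partial u_k\wedge R$ against the merely bounded Borel form $\psi\,d^c\chi$, and then interchange $k\to\infty$ with the $\psi^{(\ell)}\to\psi$ approximation. There is a genuine gap in the way you close this: a uniform mass bound $\sup_k\|du_k\wedge d^c\chi\wedge R\|_X<\infty$ is not enough to make the two-stage limit commute. Uniform integrability (e.g.\ a uniform bound $|du_k\wedge d^c\chi\wedge R|(E)\le f(\capa_\omega(E))$ as in Corollary~\ref{boundbycap}) is what you would actually need so that $\sup_k\int_X|\psi^{(\ell)}-\psi|\,|du_k\wedge d^c\chi\wedge R|\to 0$ as $\ell\to\infty$; ``dominated convergence uniformly in $k$'' as written is not a theorem. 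Such a capacity estimate can be proved from Cauchy--Schwarz before this lemma, but you do not invoke it, and invoking Corollary~\ref{boundbycap} directly would be out of order since its proof uses Lemma~\ref{dudcwedgeR}. For the ``in particular'' part your Cauchy--Schwarz argument is essentially the paper's, so that piece is fine.
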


\begin{proof}
    Let $\chi$ be a smooth function on $X$. We have
    \begin{align*}|\left \langle d(u_k-u)\wedge d^c \psi \wedge R ,\chi\right \rangle| 
&= \Big| \int_X (u_k-u) d\chi\wedge d^c \psi \wedge R + \int_X (u_k-u)dd^c \psi \wedge R\Big| \\
&\leq  \int_X |u_k-u| |d\chi\wedge d^c \psi \wedge R| + \int_X |u_k-u| |dd^c \psi\wedge R|.
\end{align*}
Here, for a signed measure $\mu$, we denote $|\mu|$ as the total variation of $\mu$. Since $\psi$ is a bounded quasi-psh function, both $|d\chi\wedge d^c \psi \wedge R|$ and $|dd^c \psi \wedge R|$ put no mass on pluripolar sets. Hence, by Corollary~\ref{convergence}, the first part follows.

We now prove the second part. By Cauchy-Schwarz inequality, we have
\[\Big| \int_X \chi du_k\wedge d^c \psi \wedge R \Big| \leq \Big|\int_X du_k \wedge d^c u_k \wedge R \Big|^{1/2} \Big|\int_X \chi^2 d\psi \wedge d^c \psi \wedge R \Big|^{1/2}\lesssim \|\chi\|_{\mathscr{C}^0}.\]
Here, $\lesssim$ does not depend on $\chi$ (we argue similarly to Lemma~\ref{duwedgeR}). Letting $k\rightarrow \infty$, we get
\[\Big| \int_X \chi du\wedge d^c \psi \wedge R \Big| \lesssim \|\chi\|_{\mathscr{C}^0}.\]
Hence, $du\wedge d^c \psi \wedge R$ is a current of order $0$. The proof is complete.
\end{proof}

\begin{corollary}\label{boundbycap}
   There exists a continuous function $f:[0,\infty)\rightarrow [0,\infty)$ satisfying $f(0) = 0$, depending on $X,\omega,R,\|u\|_*,C_\psi,\|\psi\|_{L^
   \infty}$, such that
    \begin{equation}\label{bound by cap formula}|du_k\wedge d^c \psi \wedge R|(E) \leq f ( \capa_\omega (E) )\end{equation}
    for every $k$ and for every Borel subset $E$ of $X$. As a consequence, the total variation $\mu$ of $du \wedge d^c \psi \wedge R$ is a non-pluripolar measure.
\end{corollary}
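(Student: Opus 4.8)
The plan is to reduce \eqref{bound by cap formula}, by a Cauchy--Schwarz inequality, to a domination-by-capacity estimate for the single fixed positive measure $d\psi\wedge d^c\psi\wedge R$, all the dependence on $k$ being absorbed into a uniform multiplicative constant. Arguing as in the proof of Lemma~\ref{dudcwedgeR}, for any continuous $\chi$ with $|\chi|\le 1$ supported in an open set $U$, Cauchy--Schwarz for the positive semi-definite pairing $(\alpha,\beta)\mapsto i\alpha\wedge\overline\beta\wedge R$ on $(1,0)$-forms gives
\[\Big|\int_X\chi\,du_k\wedge d^c\psi\wedge R\Big|\le\Big(\int_X du_k\wedge d^c u_k\wedge R\Big)^{1/2}\Big(\int_U d\psi\wedge d^c\psi\wedge R\Big)^{1/2}.\]
Taking the supremum over such $\chi$ and the infimum over open $U\supset E$ (outer regularity of the total variation and of the measure $d\psi\wedge d^c\psi\wedge R$) yields
\[|du_k\wedge d^c\psi\wedge R|(E)\le\Big(\int_X du_k\wedge d^c u_k\wedge R\Big)^{1/2}\Big(\int_E d\psi\wedge d^c\psi\wedge R\Big)^{1/2}.\]
By Theorem~\ref{regularize} one has $du_k\wedge d^c u_k\le T_k$ with $T_k$ closed positive and $\|T_k\|_X\le C\|T\|_X\lesssim\|u\|_*^2$, so, exactly as in the proof of Lemma~\ref{duwedgeR}, $\int_X du_k\wedge d^c u_k\wedge R\le\int_X T_k\wedge R\lesssim\|T_k\|_X\lesssim\|u\|_*^2$ with a constant depending only on $X,\omega,\theta_1,\dots,\theta_{n-1},R$. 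Hence it suffices to produce a continuous $f_0$ with $f_0(0)=0$, depending only on $X,\omega,R,\psi$, such that $(d\psi\wedge d^c\psi\wedge R)(E)\le f_0(\capa_\omega(E))$ for every Borel $E$.

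Next I would show that the fixed finite measure $d\psi\wedge d^c\psi\wedge R$ charges no pluripolar set. Using $\psi\ge 0$ and that $\psi$ is $C\omega$-psh, the identity $2\,d\psi\wedge d^c\psi=dd^c(\psi^2)-2\psi\,dd^c\psi$ together with $-2\psi(dd^c\psi+C\omega)\le 0$ and $0\le\psi\le\|\psi\|_{L^\infty}$ gives, first for smooth $\psi$ and then in general by approximating $\psi$ from above by smooth quasi-psh functions and passing to the limit (continuity of non-pluripolar products along decreasing sequences of bounded quasi-psh potentials),
\[0\le d\psi\wedge d^c\psi\wedge R\le\tfrac12\,S_\psi\wedge R,\qquad S_\psi:=dd^c(\psi^2)+C'\omega,\]
where $C'=C'(\psi)$ is chosen large enough that the closed positive $(1,1)$-current $S_\psi$ has the bounded potential $\psi^2$. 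Thus $S_\psi\wedge R$ is a non-pluripolar product of $n$ currents, hence puts no mass on pluripolar sets (\cite{BEGZ}), and therefore neither does $d\psi\wedge d^c\psi\wedge R$.

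The final ingredient is the soft fact that any finite positive Borel measure $\nu$ on $X$ charging no pluripolar set is dominated by capacity: there is a non-decreasing $f_0\colon[0,\infty)\to[0,\infty)$ with $f_0(t)\to 0$ as $t\to0^+$ and $\nu(E)\le f_0(\capa_\omega(E))$ for all Borel $E$, and $f_0$ can be taken continuous. This follows from the countable subadditivity of $\capa_\omega$: if it failed one could choose open $E_j$ with $\capa_\omega(E_j)\le 2^{-j}$ and $\nu(E_j)\ge\varepsilon>0$, whence $\capa_\omega\big(\bigcup_{j\ge N}E_j\big)\le 2^{1-N}$, so $E_\infty:=\bigcap_N\bigcup_{j\ge N}E_j$ is pluripolar while $\nu(E_\infty)=\lim_N\nu\big(\bigcup_{j\ge N}E_j\big)\ge\varepsilon$, a contradiction. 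Applying this to $\nu=d\psi\wedge d^c\psi\wedge R$ and combining with the Cauchy--Schwarz bound proves \eqref{bound by cap formula} with $f(t):=C^{1/2}\|u\|_*\,f_0(t)^{1/2}$, uniformly in $k$.

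For the consequence, Lemma~\ref{dudcwedgeR} and the uniform total-mass bound (the case $E=X$ above) show $du_k\wedge d^c\psi\wedge R\to\nu:=du\wedge d^c\psi\wedge R$ weakly as measures, so $|\nu|(U)\le\liminf_k|du_k\wedge d^c\psi\wedge R|(U)\le f(\capa_\omega(U))$ for every open $U$; since a pluripolar set lies in some $\{v<-t\}$ with $\capa_\omega(\{v<-t\})\to 0$ as $t\to\infty$, this forces $\mu(E)=|\nu|(E)=0$ on pluripolar $E$, i.e.\ $\mu$ is non-pluripolar (and in fact $\mu(E)\le f(\capa_\omega(E))$ for all Borel $E$ by outer regularity). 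I expect the main obstacle to be making the reduction to the bounded-potential current $S_\psi$ fully rigorous at the level of non-pluripolar products, and, more conceptually, recognizing that the ``non-pluripolar $\Rightarrow$ capacity-dominated'' step is precisely what supplies a modulus $f$ independent of $k$ --- without which neither \eqref{bound by cap formula} nor the non-pluripolarity of $\mu$ would follow from the convergence in Lemma~\ref{dudcwedgeR}.
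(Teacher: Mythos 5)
Your overall strategy coincides with the paper's: apply the pointwise Cauchy--Schwarz inequality to pull out a uniform factor $\bigl(\int_X du_k\wedge d^c u_k\wedge R\bigr)^{1/2}\lesssim\|u\|_*$, then reduce everything to bounding the fixed measure $d\psi\wedge d^c\psi\wedge R$ by a modulus of $\capa_\omega$, and finally pass to $\mu$ via weak convergence. The one genuine difference is \emph{how} that modulus is produced. The paper invokes the same decomposition $d\psi\wedge d^c\psi = dd^c(\psi^2)-\psi\,dd^c\psi$ you use, but then cites \cite[Theorem~1.1]{Lu-comparison-capacity}, which directly furnishes a capacity--volume comparison for such mixed Monge--Amp\`ere measures. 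You instead prove that $d\psi\wedge d^c\psi\wedge R$ charges no pluripolar set (by dominating it with the bounded-potential current $S_\psi = dd^c(\psi^2)+C'\omega$ and using \cite{BEGZ}), and then derive the existence of a modulus $f_0$ abstractly from non-pluripolarity via countable subadditivity of $\capa_\omega$ and a Borel--Cantelli-type contradiction. This is a more elementary and self-contained route, though less explicit: Lu's theorem gives a concrete $f$, whereas your $f_0$ is only defined as $\sup\{\nu(E):\capa_\omega(E)\le t\}$ and then regularized. Both arguments deliver exactly the dependence $f=f(X,\omega,R,\|u\|_*,\psi)$ claimed in the corollary. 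The only step you flag as delicate --- passing $d\psi\wedge d^c\psi\le\tfrac12 S_\psi$ through the wedge with $R$ at the level of non-pluripolar products --- is indeed the one place needing care, but it is standard for bounded quasi-psh potentials and is in the same spirit as what the paper does implicitly before invoking Lu's theorem.
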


\begin{proof}
Let $E$ be a Borel subset of $X$. By Cauchy-Schwarz inequality, we have
    \begin{align*} \int_E  |du_k\wedge d^c \psi \wedge R | &\leq \Big|\int_E du_k \wedge d^c u_k \wedge R \Big|^{1/2} \Big|\int_E d\psi \wedge d^c \psi \wedge R \Big|^{1/2} \\
    &\leq \Big|\int_E T_k \wedge R \Big|^{1/2} \Big|\int_E d\psi \wedge d^c \psi \wedge R \Big|^{1/2} 
    \lesssim \Big|\int_E d\psi \wedge d^c \psi \wedge R \Big|^{1/2},\end{align*}
  where $\lesssim$ depends only on $X,\omega,R,\|u\|_*$. We note that here we need to use the monotonicity of the non-pluripolar product to bound $\int_X T_k \wedge R$ by some uniform constant (see also the proof of Lemma~\ref{duwedgeR}). Now, since $\psi$ is a bounded quasi-psh function and $d\psi \wedge d^c \psi = dd^c (\psi^2) - \psi dd^c \psi$, we can bound
  \begin{align*}d\psi \wedge d^c \psi \wedge R &\leq ((C_\psi +\|\psi\|_{L^\infty})\omega + dd^c (\psi + \psi^2)) \wedge R \\ &\leq \frac{1}{n}\cdot \Big((C_\psi +\|\psi\|_{L^\infty})\omega + \sum_{j=1}^{n-1}\theta_j + dd^c (\psi + \psi^2 + \sum_{j=1}^{n-1}\varphi_j)\Big)^n \\
  &\leq \frac{1}{n} \cdot (\|\psi\|_{L^\infty}+\|\psi^2\|_{L^\infty})\capa_{\Theta,\Psi}(\cdot).\end{align*}
  Here $\capa_{\Theta,\Psi}$ is the Monge-Amp\`ere capacity introduced in \cite{DiNezzaLu-capacity}, defined by:
  \begin{align*}
\capa_{\Theta,\Psi}(E):= &\sup \Big\{\int_E (\Theta + dd^c u)^n: u\in \PSH(X,\Theta): \Psi-1\leq u\leq \Psi\Big\}, \\&\text{ with }
  \Theta = (\|\psi\|_{L^\infty}+\|\psi^2\|_{L^\infty})^{-1} ((C_\psi +\|\psi\|_{L^\infty})\omega + \sum_{j=1}^{n-1}\theta_j),\\ &\text{ and }
  \Psi = (\|\psi\|_{L^\infty}+\|\psi^2\|_{L^\infty})^{-1} \sum_{j=1}^{n-1}\varphi_j.
\end{align*}
  Then, by \cite[Theorem 1.1]{Lu-comparison-capacity}, we get that the function $f$ satisfies \eqref{bound by cap formula}.

  Assume that $E$ is a pluripolar set. Let $E_l$ be a decreasing sequence of open sets such that $E = \bigcap_{l} E_l $. By Lemma~\ref{dudcwedgeR}, we have
    \[\mu(E_l) \leq \liminf_{k\to \infty} \int_{E_l} |du_k\wedge d^c \psi \wedge R|.\]
    By \eqref{bound by cap formula}, we have $\mu(E_l) \leq f(\capa_\omega(E_l))$ for every $l$. Letting $l \rightarrow \infty$, we infer that $\mu$ is a non-pluripolar measure. The proof is complete.
\end{proof}

\begin{lemma}\label{generalconvergence}
    Let $v_1,\ldots,v_m$ be bounded functions in $W^{1,2}_*(X)$. Let $v_{j,k}$ be the sequence as in Lemma \ref{regularize} associated to $v_j$ for $j=1,\ldots,m$. Let $\rho$ be a smooth function on $\R^m$. Then
    \[\rho(v_{1,k},\ldots,v_{m,k})du_k \wedge d^c \psi \wedge R \rightarrow \rho(v_1,\ldots,v_m) du \wedge d^c \psi \wedge R\]
    weakly as currents.
\end{lemma}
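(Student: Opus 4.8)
The plan is to test both sides against an arbitrary smooth function $\chi$ on $X$ and estimate the difference using the triangle inequality by interpolating: first replace $u_k$ by $u$ (keeping the regularized $v_{j,k}$), then replace each $v_{j,k}$ by $v_j$. More precisely, writing $\vec v_k = (v_{1,k},\ldots,v_{m,k})$ and $\vec v = (v_1,\ldots,v_m)$, I would bound
\[
\Big| \int_X \chi\,\rho(\vec v_k)\,du_k\wedge d^c\psi\wedge R - \int_X \chi\,\rho(\vec v)\,du\wedge d^c\psi\wedge R \Big|
\]
by the sum of
\[
\Big| \int_X \chi\,\rho(\vec v_k)\,d(u_k-u)\wedge d^c\psi\wedge R \Big|
\quad\text{and}\quad
\Big| \int_X \chi\,\big(\rho(\vec v_k)-\rho(\vec v)\big)\,du\wedge d^c\psi\wedge R \Big|.
\]
For the second term, note that by Theorem~\ref{regularize} and the fact (Corollary~\ref{boundbycap}) that the total variation $\mu$ of $du\wedge d^c\psi\wedge R$ is non-pluripolar, each $v_{j,k}\to v_j$ $\mu$-a.e.; since the $v_{j,k}$ are uniformly bounded (the $v_j$ being bounded) and $\rho$ is smooth hence Lipschitz on the relevant compact box, $\rho(\vec v_k)\to\rho(\vec v)$ in $L^1(\mu)$ by the dominated-convergence argument used in Corollary~\ref{convergence}; together with $\chi$ bounded this term tends to $0$.

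The first term is the delicate one. The natural move is to integrate by parts: by the definition of $du_k\wedge d^c\psi\wedge R$ (valid since $u_k$ is smooth, so it is the genuine wedge product), $\chi\rho(\vec v_k)$ is only $\mathscr{C}^1$, not smooth, so I would instead argue directly via Cauchy--Schwarz exactly as in the proof of Lemma~\ref{dudcwedgeR}:
\[
\Big| \int_X \chi\,\rho(\vec v_k)\,d(u_k-u)\wedge d^c\psi\wedge R \Big|
\le \Big( \int_X \chi^2\rho(\vec v_k)^2\, d\psi\wedge d^c\psi\wedge R \Big)^{1/2}\Big( \int_X d(u_k-u)\wedge d^c(u_k-u)\wedge R\Big)^{1/2}.
\]
The first factor is bounded uniformly in $k$ because $\chi$, $\rho(\vec v_k)$ are uniformly bounded and $d\psi\wedge d^c\psi\wedge R$ is a fixed finite measure ($\psi$ bounded quasi-psh). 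The main obstacle is controlling the second factor: one does not have $d(u_k-u)\wedge d^c(u_k-u)\wedge R\to 0$ from the weak convergence $u_k\to u$ alone. Here I would use that $du_k\wedge d^c u_k\le T_k$ with $\|T_k\|_X\le C\|T\|_X$, so $\int_X du_k\wedge d^c u_k\wedge R$ is bounded, and that by Lemma~\ref{duwedgeR} and a polarization/Cauchy--Schwarz argument the bilinear pairings $\int_X \eta\, du_k\wedge d^c u_k\wedge R$ converge for suitable test objects; combining with the lower-semicontinuity of $u\mapsto \int \eta\,du\wedge d^c u\wedge R$ along the approximation one gets $\limsup_k \int_X d(u_k-u)\wedge d^c(u_k-u)\wedge R = 0$. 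In the write-up I expect this should be folded into an earlier auxiliary statement (an $L^2(R)$-type convergence $du_k\to du$), after which Lemma~\ref{generalconvergence} follows by the Cauchy--Schwarz splitting above.

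So in summary: (i) reduce to testing against $\chi\in\mathscr{C}^\infty(X)$; (ii) split into the ``change $u_k\to u$'' term and the ``change $v_{j,k}\to v_j$'' term; (iii) handle the latter by $L^1$-convergence with respect to the non-pluripolar measure $\mu$, exactly as in Corollary~\ref{convergence}; (iv) handle the former by Cauchy--Schwarz against $d\psi\wedge d^c\psi\wedge R$, reducing it to the energy convergence $\int_X d(u_k-u)\wedge d^c(u_k-u)\wedge R\to 0$, which is the crux and follows from the uniform bound $\|T_k\|_X\lesssim\|T\|_X$ together with the weak convergences established in Lemma~\ref{duwedgeR}.
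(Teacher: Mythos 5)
Your outer decomposition (test against smooth $\chi$, split into ``change $u_k\to u$'' and ``change $v_{j,k}\to v_j$'') and your treatment of the second piece are reasonable and close in spirit to Corollary~\ref{convergence}. The gap is in the first piece. You reduce it, via Cauchy--Schwarz, to showing
\[
\int_X d(u_k-u)\wedge d^c(u_k-u)\wedge R \;\longrightarrow\; 0,
\]
and you claim this follows from the uniform bound $\|T_k\|_X\lesssim\|T\|_X$ together with Lemma~\ref{duwedgeR} and a polarization/lower-semicontinuity argument. This does not close. Lemma~\ref{duwedgeR} gives weak convergence of the \emph{linear} objects $du_k\wedge R\to du\wedge R$; it says nothing about the \emph{quadratic} energies $\int du_k\wedge d^c u_k\wedge R$, which are only bounded (from $\|T_k\|_X$) and lower-semicontinuous along weak limits. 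In particular, the $\limsup$ of the error energy need not vanish: the Dinh--Sibony-type regularization of Theorem~\ref{regularize} produces pointwise convergence off a pluripolar set plus the uniform bound on $T_k$, but it is \emph{not} a strong $L^2(R)$-approximation of gradients the way mollification in $W^{1,2}(\R^n,\Leb)$ is. Moreover, at this point in the paper the quantity $\int d(u_k-u)\wedge d^c(u_k-u)\wedge R$ is not even defined (there is no notion of $du\wedge d^c u\wedge R$ for $u\in W^{1,2}_*(X)$ against singular $R$), and the singular Cauchy--Schwarz inequality you would need (Theorem~\ref{CSdsh}) is only established \emph{later}, in Section~\ref{section CS}, partly relying on the present lemma — so invoking it here would be circular.

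The paper's argument avoids the energy-convergence question entirely and interpolates the other way around. Using quasi-continuity, one replaces $(v_1,\ldots,v_m)$ by continuous $(\widetilde v_1,\ldots,\widetilde v_m)$ agreeing with $(v_1,\ldots,v_m)$ off an open set $E$ of small $\capa_\omega$-capacity. With $\chi\,\rho(\widetilde v)$ continuous and the total variations of $du_k\wedge d^c\psi\wedge R$ uniformly bounded, Lemma~\ref{dudcwedgeR} gives $\rho(\widetilde v)\,du_k\wedge d^c\psi\wedge R\to\rho(\widetilde v)\,du\wedge d^c\psi\wedge R$. The error from substituting $\rho(\widetilde v)$ back to $\rho(v)$ is bounded \emph{uniformly in $k$} by $2\|\chi\|_{L^\infty}\|\rho\|_{L^\infty}\,f(\capa_\omega(E))$ via the uniform capacity estimate of Corollary~\ref{boundbycap}, and is small as $\capa_\omega(E)\to 0$. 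The final step, replacing $\rho(v)$ by $\rho(v_{1,k},\ldots,v_{m,k})$ against $du_k\wedge d^c\psi\wedge R$, is again controlled by the same uniform capacity bound together with an argument as in Corollary~\ref{convergence}. The key mechanism is thus the \emph{uniform-in-$k$ capacity bound} on the family of measures $|du_k\wedge d^c\psi\wedge R|$, which substitutes for the strong energy convergence that your plan needs but that the regularization does not supply.
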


\begin{proof}
Recall that a good representative is well-defined modulo a pluripolar set. Then, by Corollary~\ref{boundbycap}, all the currents make sense. Let $\chi$ be a smooth function on $X$. By quasi-continuity property of complex Sobolev functions (see \cite{Vigny}), for $\varepsilon>0$, there exists an open set $E$ such that $\capa_\omega ( E) < \varepsilon$, and a continuous function $\widetilde{v_1},\ldots,\widetilde{v_m}$ such that $v_j = \widetilde{v_j}$ on $X\setminus E$ for $j=1,\ldots,m$. Therefore, by Lemma~\ref{dudcwedgeR}, we have
    \[\int_X \chi \rho(\widetilde{v_{1}},\ldots,\widetilde{v_{m}})du_k \wedge d^c \psi \wedge R \rightarrow \int_X \chi \rho(\widetilde{v_{1}},\ldots,\widetilde{v_{m}}) du \wedge d^c \psi \wedge R.\]

    On the other hand, by Corollary~\ref{boundbycap}, we can control
    \begin{align*}
\Big|\int_X \chi \rho(\widetilde{v_{1}},\ldots,\widetilde{v_{m}})&du_k \wedge d^c \psi \wedge R - \int_X \chi \rho(v_1,\ldots,v_m) du_k \wedge d^c \psi \wedge R \Big| \\
&= \Big|\int_E \chi (\rho(\widetilde{v_{1}},\ldots,\widetilde{v_{m}})-\rho(v_1,\ldots,v_m) ) du_k \wedge d^c \psi \wedge R \Big|\\
&\leq 2 \|\chi\|_{L^\infty}\|\rho\|_{L^\infty([-M_1,M_1]\times \cdots\times[-M_m,M_m])} f (\capa_\omega(E)),
 \end{align*}
 where $M_j = \|v_j\|_{L^\infty}$ for $j=1,\ldots,m$. Since $E$ is open, we also have $|du\wedge d^c \psi \wedge R|(E) \leq f (\capa_\omega(E))$. Noting that $f$ is continuous and $f(0) = 0$, letting $\varepsilon \rightarrow 0^+$, we get
 \[\rho(v_{1},\ldots,v_{m})du_k \wedge d^c \psi \wedge R \rightarrow \rho(v_1,\ldots,v_m) du \wedge d^c \psi \wedge R\]
 weakly as currents.

 The remaining is to control
 \[\Big|\int_X \chi \rho(v_{1,k},\ldots,v_{m,k})du_k \wedge d^c \psi \wedge R - \int_X \chi \rho(v_1,\ldots,v_m) du_k \wedge d^c \psi \wedge R \Big|.\]
 By Corollary~\ref{boundbycap}, we can bound $|du_k\wedge d^c \psi\wedge R|$ uniformly by $f(\capa_\omega (\cdot) )$. Now, arguing similarly to Lemma~\ref{convergence}, the proof is complete.
\end{proof}

We finish the first step by proving the following integration by parts formula.

\begin{theorem}\label{intbypastpsh}
    Let $u,v_1,\ldots,v_m$ be bounded functions in $W^{1,2}_*(X)$. Let $\theta_1,\ldots,\theta_{n-1}$ be closed $(1,1)$-forms in big cohomology classes. Let $\varphi_j \in \PSH(X,\theta_j)$ for $j=1,\ldots,n-1$. Let $R= (\theta_1+dd^c \varphi_1) \wedge \cdots \wedge (\theta_n+dd^c \varphi_n)$. Let $\psi$ be a bounded quasi-psh function. Let $\rho$ be a smooth function in $\R^m$. Then we have
    \begin{align*} \int_X  \rho(v_1,\ldots,v_m) du\wedge d^c \psi \wedge R &= 
-\sum_{j=1}^m \int_X u \partial_j\rho(v_1,\ldots,v_m) d v_j \wedge d^c \psi \wedge R\\ &- \int_X u \rho(v_1,\ldots,v_m) dd^c \psi \wedge R.
\end{align*}
\end{theorem}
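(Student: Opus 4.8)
\textbf{Proof proposal for Theorem~\ref{intbypastpsh}.}

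The plan is to reduce everything to the smooth case by a double approximation: regularize $u,v_1,\dots,v_m$ simultaneously via Theorem~\ref{regularize}, and exploit that all measures in play are non-pluripolar so that the good representatives are seen by them. Concretely, let $(u_k)_k$ and $(v_{j,k})_k$ be the sequences associated to $u$ and $v_j$ by Theorem~\ref{regularize}. Since these are $\mathscr{C}^\infty$ (or at least $\mathscr{C}^1$, which suffices) and $\psi$ is a bounded quasi-psh function, the classical integration by parts formula gives, for each $k$,
\begin{align*}
\int_X \rho(v_{1,k},\dots,v_{m,k})\, du_k\wedge d^c\psi\wedge R
&= -\sum_{j=1}^m \int_X u_k\, \partial_j\rho(v_{1,k},\dots,v_{m,k})\, dv_{j,k}\wedge d^c\psi\wedge R\\
&\quad - \int_X u_k\, \rho(v_{1,k},\dots,v_{m,k})\, dd^c\psi\wedge R.
\end{align*}
It therefore remains to pass to the limit $k\to\infty$ in each of the three terms.

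For the left-hand side, Lemma~\ref{generalconvergence} gives directly that $\rho(v_{1,k},\dots,v_{m,k})\, du_k\wedge d^c\psi\wedge R$ converges weakly to $\rho(v_1,\dots,v_m)\, du\wedge d^c\psi\wedge R$; testing against the constant function $1$ handles this term. For the last term on the right, $dd^c\psi\wedge R$ is a fixed non-pluripolar measure (as $\psi$ is bounded quasi-psh), so $u_k\,\rho(v_{1,k},\dots,v_{m,k})$ converges to $u\,\rho(v_1,\dots,v_m)$ pointwise outside a pluripolar set by Theorem~\ref{regularize}, hence $dd^c\psi\wedge R$-a.e.; since these functions are uniformly bounded (using that $u$ is bounded and $u_k$ are uniformly bounded, likewise $v_j$, $v_{j,k}$, and $\rho$ is smooth hence bounded on the relevant compact box), dominated convergence applies. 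The genuinely delicate term is the middle sum $\sum_j \int_X u_k\, \partial_j\rho(v_{1,k},\dots,v_{m,k})\, dv_{j,k}\wedge d^c\psi\wedge R$: here the measure $dv_{j,k}\wedge d^c\psi\wedge R$ itself varies with $k$ and is only a signed measure. I would handle it exactly as in Lemma~\ref{generalconvergence}: by Corollary~\ref{boundbycap} applied to $v_j$ in place of $u$, the total variations $|dv_{j,k}\wedge d^c\psi\wedge R|$ are uniformly dominated by $f(\capa_\omega(\cdot))$ for a single continuous $f$ with $f(0)=0$; using quasi-continuity of the bounded Sobolev functions $u,v_1,\dots,v_m$ one replaces $u_k\,\partial_j\rho(v_{1,k},\dots,v_{m,k})$ on the complement of a small-capacity open set by a continuous function, applies the weak convergence of Lemma~\ref{dudcwedgeR} (with $v_j$ as the differentiated function), and controls the error on the exceptional set by the uniform capacity bound. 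Letting the capacity of the exceptional set go to $0$ and invoking $f(0)=0$ closes the argument.

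The main obstacle is precisely this middle term: one must upgrade the weak convergence $dv_{j,k}\wedge d^c\psi\wedge R\to dv_j\wedge d^c\psi\wedge R$ (Lemma~\ref{dudcwedgeR}) against the \emph{non-smooth, $k$-dependent} integrand $u_k\,\partial_j\rho(v_{1,k},\dots,v_{m,k})$. The uniform capacity estimate of Corollary~\ref{boundbycap} is the key tool that makes the cut-off argument robust, and one should be a little careful that Corollary~\ref{boundbycap} and Lemma~\ref{dudcwedgeR} were stated with $u$ as the differentiated function but apply verbatim to any bounded $W^{1,2}_*$ function, in particular to $v_j$; I would remark on this explicitly. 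With those pieces in place, all three terms pass to the limit and the identity of Theorem~\ref{intbypastpsh} follows.
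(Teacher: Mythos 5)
Your proposal is correct and follows essentially the same route as the paper: regularize $u,v_1,\dots,v_m$ simultaneously via Theorem~\ref{regularize}, apply the classical integration by parts formula to the smooth approximants, and pass to the limit in each term using the non-pluripolarity of the relevant measures together with the capacity/quasi-continuity machinery of Corollary~\ref{boundbycap} and Lemma~\ref{generalconvergence}. The only cosmetic difference is that the paper invokes Lemma~\ref{generalconvergence} directly (which, after the evident relabeling in which $v_j$ plays the role of the differentiated function and $u\,\partial_j\rho(v_1,\dots,v_m)$ plays the role of the smooth coefficient, covers your ``delicate'' middle term verbatim), whereas you re-derive that convergence by hand; both are fine, and your explicit remark that Corollary~\ref{boundbycap} and Lemma~\ref{dudcwedgeR} apply to any bounded $W^{1,2}_*$ function is a worthwhile clarification.
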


\begin{proof}
    We use Lemma~\ref{regularize} and Lemma~\ref{generalconvergence} to pass to the case when all $u,v_1,\ldots,v_m$ are smooth functions. The result follows by the usual integration by part formula.
\end{proof}

\begin{remark}\label{intbypartdshbound}
    We note that in Theorem~\ref{intbypastpsh}, one can consider $\psi = \gamma_1 - \gamma_2$, where $\gamma_1$ and $\gamma_2$ are bounded quasi-psh functions.
\end{remark}

\subsection{Second step}\label{bounded dsh section}

Consider the case when $\psi$ is merely a bounded dsh function. Put $\psi = \gamma_1 - \gamma_2$, where $\gamma_1$ and $\gamma_2$ are quasi-psh functions. We define $dd^c \psi \wedge R = dd^c \gamma_1 \wedge R - dd^c \gamma_2 \wedge R$ in the non-pluripolar sense. Let $\psi_k = \max(\gamma_1,-k) - \max(\gamma_2,-k)$. By \cite[Lemma 2.6]{Viet-convexity-weightedclass} (see also \cite[Proposition 2.4]{Vu_DoHS-quantitative1}), the sequence of positive measures 
\[\mathbf{1}_{\{\gamma_1 >-k\} \cap \{\gamma_2>-k\}} d\psi_k \wedge d^c \psi_k \wedge R\]
has uniformly bounded mass and converges to a non-pluripolar measure which we denote by $d\psi \wedge d^c \psi \wedge R$. The key of this fact is the following estimate (which we also use frequently later):

\begin{lemma}\label{dv wedge dc v control}
    Let $u_1$ and $u_2$ be bounded $C\omega$-psh functions. Let $T$ be a closed positive $(n-1,n-1)$-current. Put $v = u_1 - u_2$. Then we have
    \[\int_X dv\wedge d^c v \wedge T \leq 2 C \|v\|_{L^\infty} \|T\|_X.\]
\end{lemma}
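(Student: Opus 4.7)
The plan is to reduce the estimate to the integration by parts identity
\[
\int_X dv \wedge d^c v \wedge T = -\int_X v\, dd^c v \wedge T,
\]
which holds for every bounded dsh function $v$ and every closed positive $(n-1,n-1)$-current $T$. Granting this for the moment, I would use $v = \varphi_1 - \varphi_2$ together with the decomposition $dd^c \varphi_j = (C\omega + dd^c \varphi_j) - C\omega$ for $j=1,2$, in which the $C\omega$-contributions cancel when the difference is taken. This yields
\[
\int_X dv \wedge d^c v \wedge T = -\int_X v\,(C\omega + dd^c \varphi_1) \wedge T + \int_X v\,(C\omega + dd^c \varphi_2) \wedge T.
\]

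Next, each $(C\omega + dd^c \varphi_j) \wedge T$ is a positive measure whose total mass equals $C \int_X \omega \wedge T = C\|T\|_X$, since $T$ being closed forces $\int_X dd^c \varphi_j \wedge T = 0$. Bounding $|v|$ pointwise by $\|v\|_{L^\infty}$ and adding the two contributions then produces the desired inequality $2C\|v\|_{L^\infty}\|T\|_X$.

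The main obstacle is justifying the integration by parts identity for bounded dsh $v$ and a \emph{general} closed positive $(n-1,n-1)$-current $T$, since Theorem~\ref{intbypastpsh} is only stated for $R$ a non-pluripolar product of closed positive $(1,1)$-currents. I would handle this by expanding both sides using $v = \varphi_1 - \varphi_2$ and reducing to the four classical identities
\[
\int_X d\varphi_i \wedge d^c \varphi_j \wedge T = -\int_X \varphi_i\, dd^c \varphi_j \wedge T \qquad (i,j \in \{1,2\}),
\]
each involving two bounded quasi-psh functions. These are standard in the Bedford--Taylor framework: one approximates each $\varphi_j$ by a decreasing sequence of smooth $(C+\varepsilon)\omega$-psh functions (Demailly regularization), for which Stokes' theorem applies directly, and then passes to the limit using the uniform $L^\infty$-bound of the approximants together with the continuity of Bedford--Taylor products under decreasing sequences of bounded quasi-psh potentials. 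Once this identity is in hand the two short paragraphs above conclude the proof.
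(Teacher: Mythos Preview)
Your proposal is correct and follows essentially the same approach as the paper: integrate by parts to get $-\int_X v\,dd^c v\wedge T$, decompose $dd^c v$ as the difference of the two positive currents $C\omega+dd^c\varphi_j$, and bound each term by $\|v\|_{L^\infty}\cdot C\|T\|_X$. The paper simply invokes ``Stokes theorem'' for the integration by parts, whereas you spell out why it is legitimate via Bedford--Taylor regularization; this extra care is fine but not something the paper dwells on, since the identity $\int_X d\varphi_i\wedge d^c\varphi_j\wedge T=-\int_X\varphi_i\,dd^c\varphi_j\wedge T$ for bounded quasi-psh functions is classical and does not require Theorem~\ref{intbypastpsh}.
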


\begin{proof}
    By Stokes' theorem, we have 
    \begin{align*}\int_X dv\wedge d^c v \wedge T  & = -\int_X v dd^c v \wedge T \\
&=-\int_X v (dd^c u_1 +C\omega) \wedge T + \int_X v (dd^c u_2 +C\omega) \wedge T  \\
 &\leq 2 C \|v\|_{L^\infty} \|T\|_X, \end{align*}
 as desired.
\end{proof}

Let $\chi$ be a smooth function on $X$, we can define 
\[2d\chi \wedge d^c \psi \wedge R:= d(\chi+\psi)\wedge d^c (\chi+\psi)\wedge R - d\chi\wedge d^c\chi \wedge R - d\psi\wedge d^c\psi \wedge R.\]
Put $\phi_k = k^{-1} \max (\gamma_1+\gamma_2, -k)+1$. The following convergence result is useful for us (see \cite[proof of Theorem 2.7]{Viet-convexity-weightedclass}):
\begin{equation} \label{convergencenonpluri1}
d \chi \wedge d^c \psi \wedge R = \lim_{k\rightarrow \infty} \phi_k d\chi \wedge d^c \psi_k \wedge R \quad \text{and} \quad \chi  dd^c \psi \wedge R  = \lim_{k\rightarrow \infty} \chi \phi_k dd^c \psi_k \wedge R.
\end{equation}

Let $u$ be a bounded function in $W^{1,2}_*(X)$. Define
\[\left \langle d u \wedge d^c \psi \wedge R ,\chi\right \rangle := -\int_X u d \chi \wedge d^c \psi \wedge R  - \int_X u \chi   dd^c \psi \wedge R.\]
When $u$ is a bounded quasi-psh function (which is still an element in $W^{1,2}_*(X)$), this notion coincides with the notion introduced in \cite{Viet-convexity-weightedclass} by \cite[Theorem 2.7]{Viet-convexity-weightedclass}. This is a well-defined current since $u$ is a bounded function and both $d \chi \wedge d^c \psi \wedge R$ and $\chi   dd^c \psi \wedge R$ are non-pluripolar measures.

\begin{lemma}\label{convergencenonpluri}
    $ d u \wedge d^c \psi \wedge R =\displaystyle \lim_{k\rightarrow \infty} \phi_k du\wedge d^c\psi_k \wedge R$ 
    weakly as currents.
\end{lemma}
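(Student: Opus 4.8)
The goal is to upgrade the two convergences in \eqref{convergencenonpluri1} (which concern $d\chi\wedge d^c\psi\wedge R$ and $\chi\, dd^c\psi\wedge R$ separately) to the single convergence $du\wedge d^c\psi\wedge R=\lim_k \phi_k\, du\wedge d^c\psi_k\wedge R$, where $u$ is a bounded $W^{1,2}_*$ function rather than a smooth test function. The plan is to test against an arbitrary smooth $\chi$ on $X$ and unwind both sides using the defining formula. By definition,
\[
\langle \phi_k\, du\wedge d^c\psi_k\wedge R,\chi\rangle = -\int_X u\, d(\chi\phi_k)\wedge d^c\psi_k\wedge R - \int_X u\chi\phi_k\, dd^c\psi_k\wedge R,
\]
and expanding $d(\chi\phi_k)=\phi_k\,d\chi+\chi\,d\phi_k$, I would split this into three integrals: $-\int_X u\phi_k\, d\chi\wedge d^c\psi_k\wedge R$, $-\int_X u\chi\, d\phi_k\wedge d^c\psi_k\wedge R$, and $-\int_X u\chi\phi_k\, dd^c\psi_k\wedge R$. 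The first and third should converge to $-\int_X u\, d\chi\wedge d^c\psi\wedge R$ and $-\int_X u\chi\, dd^c\psi\wedge R$ respectively, giving exactly $\langle du\wedge d^c\psi\wedge R,\chi\rangle$; the second is an error term that must vanish as $k\to\infty$.

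For the first and third terms, I would first reduce to $u$ smooth: approximate $u$ by the sequence $(u_l)$ of Theorem~\ref{regularize}, and use the uniform bounds on $du_l\wedge d^c u_l\le T_l$ together with Cauchy--Schwarz (as in Lemma~\ref{duwedgeR} and Corollary~\ref{boundbycap}) to control the total variations of $\phi_k\, d\chi\wedge d^c\psi_k\wedge R$ and $\chi\phi_k\, dd^c\psi_k\wedge R$ uniformly in $k$ by a capacity function $f$; the key point is that $0\le\phi_k\le 1$ and $\psi_k=\max(\varphi_1,-k)-\max(\varphi_2,-k)$ is a difference of bounded quasi-psh functions with $L^\infty$-norm bounded independently of $k$ (bounded by $\|\psi\|_{L^\infty}$ plus a constant), so Lemma~\ref{dv wedge dc v control} applies with a $k$-independent constant. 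Passing to good representatives and invoking Corollary~\ref{convergence}, the $u$-for-$u_l$ swap is uniformly controlled, and then for $u$ smooth the convergence is immediate from \eqref{convergencenonpluri1}.

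The main obstacle is the error term $-\int_X u\chi\, d\phi_k\wedge d^c\psi_k\wedge R$. Here I would apply Cauchy--Schwarz in the form
\[
\Big|\int_X u\chi\, d\phi_k\wedge d^c\psi_k\wedge R\Big| \le \|u\chi\|_{L^\infty}\Big(\int_X d\phi_k\wedge d^c\phi_k\wedge R\Big)^{1/2}\Big(\int_{\{-k-1<\varphi_1+\varphi_2\}\cap\{\varphi_1,\varphi_2>-k\}} d\psi_k\wedge d^c\psi_k\wedge R\Big)^{1/2},
\]
noting that $d\phi_k$ and $d^c\psi_k$ are supported on $\{\varphi_1+\varphi_2>-k-1\}$ and $\{\min(\varphi_1,\varphi_2)>-k\}$ respectively. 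The second factor: by Lemma~\ref{dv wedge dc v control} the full integral $\int_X d\psi_k\wedge d^c\psi_k\wedge R$ is bounded uniformly in $k$, and one shows the integral restricted to $\{\varphi_1+\varphi_2>-k-1\}$ tends to $0$ since $\{\varphi_1+\varphi_2>-k-1\}$ decreases to a set on which $\psi$ is already "caught" and $\psi_k=\psi$ eventually there, so the contribution concentrates on a shrinking region. For the first factor, $\int_X d\phi_k\wedge d^c\phi_k\wedge R = k^{-2}\int_X d\max(\varphi_1+\varphi_2,-k)\wedge d^c\max(\varphi_1+\varphi_2,-k)\wedge R$, and by Lemma~\ref{dv wedge dc v control} (applied to $\max(\varphi_1+\varphi_2,-k)+k$, which is bounded by $k$ plus a constant times $\sup(\varphi_1+\varphi_2)$, and is the difference of a quasi-psh function and $0$) this is $\lesssim k^{-2}\cdot k = k^{-1}\to 0$. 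Multiplying the two factors gives a bound going to $0$, which finishes the proof. This last estimate — carefully identifying the supports and bounding $\int d\psi_k\wedge d^c\psi_k\wedge R$ over the shrinking region — is exactly the step borrowed from the proof of \cite[Theorem 2.7]{Viet-convexity-weightedclass}, and it is where the bulk of the work lies.
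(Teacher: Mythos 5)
Your overall plan matches the paper's proof: test against a smooth $\chi$, observe that $\chi\phi_k\in W^{1,2}_*(X)$, apply Theorem~\ref{intbypastpsh} to produce the three terms $-\int_X u\phi_k\,d\chi\wedge d^c\psi_k\wedge R$, $-\int_X u\chi\,d\phi_k\wedge d^c\psi_k\wedge R$, $-\int_X u\chi\phi_k\,dd^c\psi_k\wedge R$, and treat the first and third by swapping $u$ for the regularizations $u_l$ of Theorem~\ref{regularize}, controlling the swap uniformly in $k$ by Cauchy--Schwarz and Lemma~\ref{dv wedge dc v control}, using \eqref{convergencenonpluri1} for fixed $l$, and then invoking Corollary~\ref{convergence}. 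That part of your argument is correct and is essentially identical to the paper's treatment of $I_{2,k}$ and $I_{3,k}$.

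For the error term $\int_X u\chi\,d\phi_k\wedge d^c\psi_k\wedge R$, both you and the paper Cauchy--Schwarz and bound the $\psi_k$-factor uniformly in $k$ by Lemma~\ref{dv wedge dc v control}. The difference is how the $\phi_k$-energy $\int_X d\phi_k\wedge d^c\phi_k\wedge R$ is shown to vanish. The paper writes $d\phi_k\wedge d^c\phi_k = dd^c(\phi_k^2)-\phi_k\,dd^c\phi_k$ and invokes the convergence result \cite[Theorem 2.2]{Viet-convexity-weightedclass}, using that $\phi_k\nearrow 1$ off a pluripolar set. You instead apply Lemma~\ref{dv wedge dc v control} to $v:=\max(\varphi_1+\varphi_2,-k)+k$, a bounded $(2C)\omega$-psh function with $\|v\|_{L^\infty}\lesssim k$, to get $\int_X d\phi_k\wedge d^c\phi_k\wedge R = k^{-2}\int_X dv\wedge d^c v\wedge R \lesssim k^{-1}$. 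This is a valid and in fact cleaner route: it is quantitative, reuses the same elementary energy lemma already deployed elsewhere in the section, and avoids an appeal to the external convergence theorem. Combined with the uniform bound on the $\psi_k$-factor, it kills the error term at once.

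One side claim in your write-up is wrong, though harmless. You assert that $\{\varphi_1+\varphi_2>-k-1\}$ \emph{decreases} and that $\int d\psi_k\wedge d^c\psi_k\wedge R$ restricted to it tends to zero. These sets in fact \emph{increase} to the complement of the pluripolar set $\{\varphi_1+\varphi_2=-\infty\}$, $\psi_k=\psi$ holds on the increasing sets $\{\min(\varphi_1,\varphi_2)>-k\}$ (not on a shrinking region), and the restricted integral does not tend to zero --- it is only uniformly bounded. None of this affects your conclusion, because the $O(k^{-1})$ decay of the $\phi_k$-factor alone carries the estimate, but the claim as stated should be removed.
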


\begin{proof}
Let $\chi$ be a smooth function on $X$. We only need to show
\begin{equation}\label{convergencenonpluri 1}\int_X \chi  d u \wedge d^c \psi \wedge R = \lim_{k\rightarrow \infty} \int_X \chi \phi_k du\wedge d^c\psi_k \wedge R.\end{equation}
By Cauchy-Schwarz, we have
\begin{align*}d (\chi \phi_k )\wedge d^c (\chi \phi_k) &= \phi_k^2 d\chi \wedge d^c \chi + \chi^2 d\phi_k \wedge d^c \phi_k + \phi_k \chi (d\chi \wedge d^c\phi_k + d\phi_k\wedge d^c \chi) \\
&\leq \phi_k^2 d\chi \wedge d^c \chi + \chi^2 d\phi_k \wedge d^c \phi_k + \phi_k \chi (d\chi \wedge d^c\chi + d\phi_k\wedge d^c \phi_k).
\end{align*}
Since $ 0\leq \phi_k\leq 1$ and $\phi_k$ is a bounded quasi-psh function, we infer that $\chi \phi_k$ is a bounded function in $W^{1,2}_*(X)$. Now, by Theorem~\ref{intbypastpsh}, we get
\begin{align}\label{convergencenonpluri 2} \int_X \chi \phi_k du\wedge d^c\psi_k \wedge R = &-\int_X u \chi d\phi_k \wedge d^c \psi_k \wedge R - \int_X u \phi_k d\chi \wedge d^c \psi_k \wedge R  \\ \nonumber &- \int_X u \chi \phi_k dd^c \psi_k \wedge R.
\end{align}

Let $I_{1,k},I_{2,k}$ and $I_{3,k}$ denote the first, second, and third term of the right-hand side of \eqref{convergencenonpluri 2}. We first estimate $I_{1,k}$. Let $C$ be a positive constant such that $\gamma_1$ and $\gamma_2$ are $C\omega$-psh functions. By Cauchy-Schwarz and Lemma~\ref{dv wedge dc v control}, we have
\begin{align*}\Big| \int_X u \chi d\phi_k \wedge d^c \psi_k \wedge R \Big| &\leq \|u\chi\|_{L^\infty} \Big|\int_X d\phi_k\wedge d^c \phi_k \wedge R \Big|^{1/2} \Big|\int_X d\psi_k\wedge d^c \psi_k \wedge R \Big|^{1/2} \\
&\leq \Big|\int_X d\phi_k \wedge d^c \phi_k \wedge R \Big|^{1/2}\cdot (2 C \|\psi\|_{L^\infty} \|R\|_X)^{1/2}.
\end{align*}
Write $d\phi_k \wedge d^c \phi_k = dd^c (\phi_k)^2 - \phi_k dd^c \phi_k$. Since $\phi_k$ increases to the constant function $1$ on $X$ outside the set $\{\gamma_1 = -\infty\} \cup \{\gamma_2 = -\infty\}$, by \cite[Theorem 2.2]{Viet-convexity-weightedclass}, $I_{1,k}\rightarrow 0$ as $k\rightarrow \infty$.

Next, we estimate $I_{2,k}$. Let $(u_l)_l$ be the sequence in Lemma~\ref{regularize} associated to $u$. By \eqref{convergencenonpluri1}, we have
\[\int_X u_l \phi_k d\chi \wedge d^c \psi_k \wedge R \rightarrow \int_X u_l d\chi \wedge d^c \psi \wedge R\]
as $k \rightarrow \infty$. By Cauchy-Schwarz and Lemma~\ref{dv wedge dc v control}, we have
\begin{align*} \Big|\int_X (u_l-u) \phi_k d\chi \wedge d^c \psi_k \wedge R \Big| &\leq \Big| \int_X |u_l-u|^2 d\chi\wedge d^c \chi \wedge R \Big|^{1/2} \Big|\int_X d\psi_k \wedge d^c \psi_k \wedge R \Big|^{1/2}\\
&\leq \Big| \int_X |u_l-u|^2 d\chi\wedge d^c \chi \wedge R \Big|^{1/2} \cdot (2 C \|\psi\|_{L^\infty} \|R\|_X)^{1/2}.
\end{align*}
Since $d\chi \wedge d^c \chi \wedge R$ and $d\chi \wedge d^c \psi \wedge R$ are non-pluripolar measures, by Corollary~\ref{convergence}, we infer that
\[I_{2,k} \rightarrow \int_X u d\chi \wedge d^c \psi \wedge R \text{ as } k \rightarrow \infty.\]

Finally, we estimate $I_{3,k}$. By \eqref{convergencenonpluri1}, we have
\[\int_X u_l \chi \phi_k d d^c \psi_k \wedge R \rightarrow \int_X u_l \chi d d^c \psi \wedge R\]
as $k \rightarrow \infty$. Write 
\[\phi_k dd^c \psi_k \wedge R = \phi_k ( (C\omega + dd^c \max(\gamma_1,-k))\wedge R - (C\omega + dd^c \max(\gamma_2,-k))\wedge R ).\]
Since $\phi_k (C\omega + dd^c \max(\gamma_i,-k))\wedge R$ increases in $k$ to $(C\omega + dd^c\gamma_i)\wedge R$ for $i=1,2$, we can bound
\[\int_X |\chi||u_l-u| \phi_k (C\omega + dd^c \max(\gamma_i,-k))\wedge R \leq \int_X |\chi| |u_l-u|(C\omega + dd^c \gamma_i) \wedge R.\]
 Since $(C\omega + dd^c \gamma_i) \wedge R$ and $dd^c \psi \wedge R$ are non-pluripolar measures, by Lemmma~\ref{convergence}, we infer that
\[I_{3,k} \rightarrow \int_X u \chi dd^c \psi \wedge R \text{ as } k \rightarrow \infty.\]
Thus, \eqref{convergencenonpluri 1} follows from \eqref{convergencenonpluri 2} and the above estimates. The proof is complete.
\end{proof}

We have the following direct corollary.

\begin{corollary}\label{nonpluripolarmeasure}
    $du\wedge d^c \psi \wedge  R$ is a non-pluripolar measure.
\end{corollary}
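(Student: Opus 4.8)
The plan is to deduce Corollary~\ref{nonpluripolarmeasure} from the approximation $du\wedge d^c\psi\wedge R = \lim_{k\to\infty}\phi_k\, du\wedge d^c\psi_k\wedge R$ established in Lemma~\ref{convergencenonpluri}, together with a uniform capacity bound for the approximating measures $\phi_k\, du\wedge d^c\psi_k\wedge R$ which is already available from the first step. Concretely, each $\psi_k=\max(\varphi_1,-k)-\max(\varphi_2,-k)$ is a difference of two \emph{bounded} quasi-psh functions, so by Remark~\ref{intbypartdshbound} and Corollary~\ref{boundbycap} the total variation of $du_l\wedge d^c\psi_k\wedge R$ is controlled by $f_k(\capa_\omega(\cdot))$ for some continuous function $f_k$ with $f_k(0)=0$. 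The issue is that the function $f_k$ produced by Corollary~\ref{boundbycap} depends on $\psi_k$ and hence on $k$, so a naive limit does not immediately give a $k$-independent control; this is the main obstacle.

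To get around it, I would revisit the proof of Corollary~\ref{boundbycap}. By Cauchy--Schwarz, for any Borel set $E$,
\[
\int_E |du_l\wedge d^c\psi_k\wedge R|\leq \Big(\int_E du_l\wedge d^c u_l\wedge R\Big)^{1/2}\Big(\int_E d\psi_k\wedge d^c\psi_k\wedge R\Big)^{1/2}\lesssim \Big(\int_E d\psi_k\wedge d^c\psi_k\wedge R\Big)^{1/2},
\]
where $\lesssim$ depends only on $X,\omega,R,\|u\|_*$ (and not on $k$). Next I would write $d\psi_k\wedge d^c\psi_k = dd^c(\psi_k^2)-\psi_k\, dd^c\psi_k$ and note $\psi_k^2\leq \|\psi\|_{L^\infty}^2$ uniformly, while $dd^c\psi_k\wedge R = (C\omega+dd^c\max(\varphi_1,-k))\wedge R - (C\omega+dd^c\max(\varphi_2,-k))\wedge R$ where each term \emph{increases} in $k$ to $(C\omega+dd^c\varphi_i)\wedge R$ by \cite[Theorem 2.2]{Viet-convexity-weightedclass}. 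Thus $|dd^c\psi_k\wedge R|(E)\leq \big((C\omega+dd^c\varphi_1)\wedge R + (C\omega+dd^c\varphi_2)\wedge R\big)(E)$, a fixed non-pluripolar measure. Feeding this into \cite[Theorem 1.1]{Lu-comparison-capacity} (applied to the bounded function $\psi_k$, uniformly in $k$) yields a \emph{single} continuous function $f$ with $f(0)=0$, depending only on $X,\omega,R,\|u\|_*,\psi$, such that $|du_l\wedge d^c\psi_k\wedge R|(E)\leq f(\capa_\omega(E))$ for all $k,l$. Letting $l\to\infty$ (using Lemma~\ref{dudcwedgeR} applied with $\psi_k$ in place of $\psi$, which is legitimate since $\psi_k$ is a bounded dsh function of the type allowed in Remark~\ref{intbypartdshbound}) gives $|du\wedge d^c\psi_k\wedge R|(E)\leq f(\capa_\omega(E))$ uniformly in $k$, and multiplying by $0\leq\phi_k\leq 1$ preserves this bound for $|\phi_k\, du\wedge d^c\psi_k\wedge R|$.

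Finally, let $E$ be a pluripolar set and choose a decreasing sequence of open sets $E_l$ with $E\subset\bigcap_l E_l$ and $\capa_\omega(E_l)\to 0$ (possible since $\capa_\omega(E)=0$). Denoting by $\mu$ the total variation of $du\wedge d^c\psi\wedge R$, the weak convergence of Lemma~\ref{convergencenonpluri} and lower semicontinuity of mass on open sets give
\[
\mu(E_l)\leq \liminf_{k\to\infty}\,\big|\phi_k\, du\wedge d^c\psi_k\wedge R\big|(E_l)\leq f(\capa_\omega(E_l)).
\]
Letting $l\to\infty$ and using $f(0)=0$ forces $\mu(E)=0$, so $\mu$ puts no mass on pluripolar sets; hence $du\wedge d^c\psi\wedge R$ is a non-pluripolar measure. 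The only genuinely delicate point is ensuring the capacity bound is uniform in $k$, which is exactly what the monotonicity of $(C\omega+dd^c\max(\varphi_i,-k))\wedge R$ and the uniform bound on $\|\psi_k\|_{L^\infty}$ provide.
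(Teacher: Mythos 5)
The overall shape of your argument (uniform capacity-type bound for the approximants, weak convergence, outer-regularity / lower semicontinuity on open sets, shrink to the pluripolar set) is a reasonable elaboration of what the paper's terse conclusion must be invoking, and your last paragraph is fine. However, the uniform bound you produce contains a genuine error, and the route through the decomposition $d\psi_k\wedge d^c\psi_k = dd^c(\psi_k^2) - \psi_k\, dd^c\psi_k$ and \cite[Theorem~1.1]{Lu-comparison-capacity} does not close.

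The specific problem is the claim that $(C\omega+dd^c\max(\varphi_i,-k))\wedge R$ increases in $k$ to $(C\omega+dd^c\varphi_i)\wedge R$ as measures, and hence is dominated by it. That monotonicity holds only for the cut-off versions $\mathbf{1}_{\cap_j\{\varphi_j>-k\}}(C\omega+dd^c\max(\varphi_i,-k))\wedge R$ or the weighted versions $\phi_k(C\omega+dd^c\max(\varphi_i,-k))\wedge R$; it is in general false without the cutoff. Indeed, $\max(\varphi_i,-k)\geq\varphi_i$ is \emph{less} singular than $\varphi_i$, so its non-pluripolar product against $R$ has total mass at least as large as that of $(C\omega+dd^c\varphi_i)\wedge R$, and the excess mass tends to concentrate exactly near the polar set of $\varphi_i$ --- so no pointwise domination is available. (Already for $n=1$ and $\varphi_i$ with a Lelong number at a point this fails.) Independently of that, the other half $dd^c(\psi_k^2)\wedge R$ is never controlled: observing $\|\psi_k\|_{L^\infty}^2\leq\|\psi\|_{L^\infty}^2$ bounds a function, not the measure $dd^c(\psi_k^2)\wedge R$, and $\psi_k^2$ is not quasi-psh with $k$-independent constants (its Hessian lower bound degenerates as $k\to\infty$), so \cite[Theorem~1.1]{Lu-comparison-capacity} cannot be applied ``uniformly in $k$'' the way you suggest.

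The fix is to keep the factor $\phi_k$ \emph{inside} the Cauchy--Schwarz estimate, as the paper does for Lemma~\ref{key2}. Distributing $\phi_k=\sqrt{\phi_k}\cdot\sqrt{\phi_k}$ and using $\phi_k^2\leq\phi_k$ gives, for any open $E$,
\[
\big|\phi_k\, du_l\wedge d^c\psi_k\wedge R\big|(E)\leq \Big(\int_E du_l\wedge d^c u_l\wedge R\Big)^{1/2}\Big(\int_E \phi_k\, d\psi_k\wedge d^c\psi_k\wedge R\Big)^{1/2}\lesssim\Big(\int_E d\psi\wedge d^c\psi\wedge R\Big)^{1/2},
\]
since $\phi_k\, d\psi_k\wedge d^c\psi_k\wedge R$ increases to $d\psi\wedge d^c\psi\wedge R$ (the monotonicity behind Lemma~\ref{key2}). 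Now $d\psi\wedge d^c\psi\wedge R$ is a \emph{single} fixed non-pluripolar measure (being a non-pluripolar product), so you do not even need a capacity-comparison function: pass $l\to\infty$ and then $k\to\infty$ using portmanteau on open sets, and finally shrink an open $G_\delta$ neighbourhood of the pluripolar set to conclude $|du\wedge d^c\psi\wedge R|$ vanishes there. Note also that the paper's own proof is organized differently: it first establishes that $du\wedge d^c\psi\wedge R$ has order $0$ via the uniform \emph{total mass} bound from Lemma~\ref{dv wedge dc v control} (which avoids both of the pitfalls above) and then invokes the non-pluripolarity of the approximants; your version tries to prove the stronger local capacity estimate, which requires the $\phi_k$-weighted domination rather than the unweighted one you used.
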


\begin{proof}
Let $\chi$ be a smooth function on $X$. Let $(u_l)_l$ be the sequence in Lemma~\ref{regularize} associated to $u$. Since $\chi \phi_k \in W^{1,2}_*(X)$, by Lemma~\ref{generalconvergence}, we have
\[\int_X \chi \phi_k du_l \wedge  d^c \psi_k \wedge R \to \int_X \chi \phi_k du \wedge  d^c \psi_k \wedge R \text{ as } l \to \infty.\]
By Cauchy-Schwarz and Lemma~\ref{dv wedge dc v control}, we have
    \begin{align*} \Big|\int_X \chi \phi_k du_l \wedge  d^c \psi_k \wedge R \Big| &\leq \|\chi\|_{\mathscr{C}^0} \Big| \int_X  du_l\wedge d^c u_l \wedge R \Big|^{1/2} \Big|\int_X d\psi_k \wedge d^c \psi_k \wedge R \Big|^{1/2}\\
&\lesssim \|\chi\|_{\mathscr{C}^0}.
\end{align*}
Thus, by Lemma~\ref{convergencenonpluri}, $du\wedge d^c \psi \wedge  R$ is a current of order $0$.

We now argue similarly to Corollary~\ref{boundbycap}. Let $E$ be a pluripolar set and $E_l$ be a decreasing sequence of open sets such that $E = \cap_l E_l$. By Corollary~\ref{boundbycap}, there exists some continuous function (does not depend on $k$) $f:[0,\infty) \to [0,\infty)$ such that $f(0)=0$ and
\[|du \wedge d^c \psi_k \wedge R|(E_l) \leq f(\capa_\omega(E_l)) \text{ for all } k \text{ and } l.\]
By Lemma~\ref{convergencenonpluri}, we have 
\[|du\wedge d^c \psi \wedge R|(E_l) \leq \liminf_{k\to \infty}|du \wedge d^c \psi_k \wedge R|(E_l) \leq f(\capa_\omega(E_l)) \]
Thus, $|du\wedge d^c \psi \wedge R|(E)\leq f(\capa_\omega(E_l))$ for all $l$. Since $E$ is pluripolar, this implies $|du\wedge d^c \psi \wedge R|(E) = 0$ and the corollary follows.
\end{proof}

We now can prove the following important lemma.

\begin{lemma}\label{key}
    Let $v$ be a bounded function in $W^{1,2}_*(X)$. Then
    \[\int_X v d u \wedge d^c \psi \wedge R =\displaystyle \lim_{k\rightarrow \infty} \int_X v \phi_k du\wedge d^c\psi_k \wedge R .\]
\end{lemma}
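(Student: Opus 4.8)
The plan is to bootstrap Lemma~\ref{key} from Lemma~\ref{convergencenonpluri} by replacing the test function $v$ with the smooth approximants $(v_l)_l$ produced by Theorem~\ref{regularize}. First I would fix $v$ bounded in $W^{1,2}_*(X)$, let $M := \|v\|_{L^\infty}$, and let $(v_l)_l$ be the associated sequence, which we may take uniformly bounded by $M$ (after rescaling the cut-off, as in the last paragraph of the proof of Theorem~\ref{regularize}) and satisfying $dv_l \wedge d^c v_l \leq S_l$ with $\|S_l\|_X \leq C\|T_v\|_X$. For each fixed smooth $v_l$, the statement $\int_X v_l\, du\wedge d^c\psi\wedge R = \lim_k \int_X v_l \phi_k\, du\wedge d^c\psi_k\wedge R$ is exactly Lemma~\ref{convergencenonpluri} tested against the smooth function $\chi = v_l$. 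So the whole issue is the interchange of the limits in $k$ and in $l$, i.e. a uniform-in-$k$ control of the error when $v_l$ is replaced by $v$.

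The key estimate is a Cauchy--Schwarz bound for $\big|\int_X (v_l - v)\phi_k\, du\wedge d^c\psi_k\wedge R\big|$, uniform in $k$. Writing $du\wedge d^c\psi_k\wedge R$ as a difference of the two pieces coming from $\psi_k = \max(\varphi_1,-k)-\max(\varphi_2,-k)$, one applies Cauchy--Schwarz in the form
\[
\Big|\int_X (v_l-v)\phi_k\, du\wedge d^c\psi_k\wedge R\Big|^2 \leq \Big(\int_X |v_l-v|^2\, du\wedge d^c u\wedge R\Big)\Big(\int_X \phi_k^2\, d\psi_k\wedge d^c\psi_k\wedge R\Big).
\]
The second factor is bounded uniformly in $k$: $0\leq\phi_k\leq 1$, and by Lemma~\ref{dv wedge dc v control} applied to $v = \psi_k = \max(\varphi_1,-k)-\max(\varphi_2,-k)$ one gets $\int_X d\psi_k\wedge d^c\psi_k\wedge R \leq 2C\|\psi_k\|_{L^\infty}\|R\|_X \leq 2C\|\psi\|_{L^\infty}\|R\|_X$ (note $\|\psi_k\|_{L^\infty}\leq\|\psi\|_{L^\infty}$ since truncation at $-k$ does not increase the sup-norm of the difference once $k$ is large, and in any case $\|\max(\varphi_i,-k)\|$ is controlled). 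The first factor involves the non-pluripolar measure $du\wedge d^c u\wedge R$ — which is well-defined and non-pluripolar by Corollary~\ref{boundbycap} with $u$ in place of $\psi$ (or directly by the bound $du\wedge d^c u\wedge R \leq T_u\wedge R$ after the integration-by-parts of Lemma~\ref{duwedgeR}) — and since $v_l \to v$ pointwise outside a pluripolar set with $|v_l-v|\leq 2M$, the dominated convergence theorem gives $\int_X |v_l-v|^2\, du\wedge d^c u\wedge R \to 0$ as $l\to\infty$. Thus $\sup_k \big|\int_X (v_l-v)\phi_k\, du\wedge d^c\psi_k\wedge R\big| \to 0$ as $l\to\infty$.

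With this uniform bound in hand, the proof closes by a standard $3\varepsilon$-argument: given $\varepsilon>0$, choose $l$ so that both $\sup_k|\int_X(v_l-v)\phi_k\,du\wedge d^c\psi_k\wedge R| < \varepsilon$ and $|\int_X(v_l-v)\,du\wedge d^c\psi\wedge R| < \varepsilon$ (the latter using Corollary~\ref{nonpluripolarmeasure}, which says $du\wedge d^c\psi\wedge R$ is a non-pluripolar measure, together with dominated convergence again); then fix this $l$ and use Lemma~\ref{convergencenonpluri} to find $k_0$ with $|\int_X v_l\,du\wedge d^c\psi\wedge R - \int_X v_l\phi_k\,du\wedge d^c\psi_k\wedge R| < \varepsilon$ for $k\geq k_0$. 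Combining the three inequalities yields $|\int_X v\,du\wedge d^c\psi\wedge R - \int_X v\phi_k\,du\wedge d^c\psi_k\wedge R| < 3\varepsilon$ for all $k\geq k_0$, which is the claim. The main obstacle is precisely the uniform-in-$k$ control of the second Cauchy--Schwarz factor; this is where Lemma~\ref{dv wedge dc v control} is essential, since it replaces a potentially divergent mass $\int_X d\psi_k\wedge d^c\psi_k\wedge R$ by the fixed quantity $2C\|\psi\|_{L^\infty}\|R\|_X$. One should also take a moment to verify that $\int_X |v_l - v|^2\,du\wedge d^c u\wedge R$ is genuinely finite and that dominated convergence applies, i.e.\ that $du\wedge d^c u\wedge R$ has finite total mass — which follows from Lemma~\ref{duwedgeR} and the Cauchy--Schwarz argument used there.
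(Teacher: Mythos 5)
Your proof is mathematically sound, but it takes a genuinely different route from the paper's, and there is one point you should be more careful about.

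\textbf{Comparison with the paper.} The paper regularizes $u$ via Theorem~\ref{regularize} (producing $(u_l)_l$), applies the \emph{classical} pointwise Cauchy--Schwarz to the smooth currents $du_l\wedge d^c\psi_k\wedge R$, and deduces the uniform-in-$(k,l)$ bound
\[
\Big|\int_E \phi_k\, du_l\wedge d^c\psi_k\wedge R\Big|\lesssim \Big(\int_E d\psi\wedge d^c\psi\wedge R\Big)^{1/2}.
\]
It then combines this capacity-type bound with the quasi-continuity of $v$ (exactly as in the proof of Lemma~\ref{generalconvergence}) and the weak convergences of Lemmas~\ref{generalconvergence} and~\ref{convergencenonpluri}. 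You instead regularize the test function $v$ via $(v_l)_l$, fix $v_l$, invoke Lemma~\ref{convergencenonpluri} for the smooth $v_l$, and then control the error $\int_X(v_l-v)\phi_k\,du\wedge d^c\psi_k\wedge R$ \emph{uniformly in $k$} by a Cauchy--Schwarz estimate. This leads to a cleaner three-epsilon bookkeeping; the trade-off is that the Cauchy--Schwarz inequality you use, with $u\in W^{1,2}_*(X)$ non-smooth and $\psi_k$ a difference of bounded quasi-psh functions, is precisely Lemma~\ref{CSquasipsh}, which the paper proves only in Section~\ref{section CS}. There is no circularity — that lemma does not rely on Lemma~\ref{key} — but you should be aware that you are invoking a forward reference (and you should cite it explicitly rather than just writing \emph{``one applies Cauchy--Schwarz''}, since that inequality is far from automatic for non-smooth $u$).

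\textbf{A point to fix.} The first Cauchy--Schwarz factor should not be written as $\int_X|v_l-v|^2\,du\wedge d^c u\wedge R$; this product is not one the paper ever defines, and neither Corollary~\ref{boundbycap} (which concerns $du_k\wedge d^c\psi\wedge R$ for $\psi$ quasi-psh, not $\psi=u$) nor Lemma~\ref{duwedgeR} (which only treats the first-order current $du\wedge R$) establishes it. The correct object — and the one that actually appears in Theorem~\ref{CSdsh} and Lemma~\ref{CSquasipsh} — is $\int_X|v_l-v|^2\,\langle T\wedge R\rangle$, where $T$ is a closed positive $(1,1)$-current dominating $du\wedge d^c u$. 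The non-pluripolar product $\langle T\wedge R\rangle$ is a finite non-pluripolar measure on the compact manifold, so your dominated-convergence step goes through verbatim after this substitution: $v_l\to v$ pointwise outside a pluripolar set, $|v_l-v|$ is uniformly bounded, hence $\int_X|v_l-v|^2\langle T\wedge R\rangle\to 0$. With that correction, and with an explicit appeal to Lemma~\ref{CSquasipsh} (noting that the verification of non-circularity you would need is routine), your argument is complete and correct.
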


\begin{proof}
    By Corollary~\ref{nonpluripolarmeasure}, all the integrals make sense. Let $(u_l)_l$ be the sequence in Lemma~\ref{regularize} associated to $u$. Let $E$ be a Borel subset of $X$. By Cauchy-Schwarz, we have
    \[\Big| \int_E \phi_kdu_l \wedge d^c \psi_k \wedge R \Big|\leq \Big|\int_E du_l \wedge d^c u_l \wedge R \Big|^{1/2} \Big|\int_E \phi_k d\psi_k \wedge d^c \psi_k \wedge R\Big|^{1/2} \lesssim \Big(\int_E d\psi\wedge d^c \psi \wedge R\Big)^{1/2}.\]
    Here, $\lesssim$ depending only on $u$ and $R$. Now, using Lemma~\ref{generalconvergence}, Lemma~\ref{convergencenonpluri}, and arguing similarly to Corollary~\ref{convergence}, the lemma follows.
\end{proof}

We now prove the main theorem of this section.

\begin{proof}[Proof of Theorem~\ref{integrationbypartdsh}]
    By Lemma~\ref{key}, we have
    \[\text{LHS} = \lim_{k\rightarrow \infty} \int_X  \rho(v_1,\ldots,v_m)\phi_k du\wedge d^c \psi_k \wedge R, \]
    \[-\text{RHS} =  \lim_{k\rightarrow \infty} \sum_{j=1}^m \int_X u\phi_k\partial_j\rho(v_1,\ldots,v_m) d v_j \wedge d^c \psi_k \wedge R  + \int_X u  \rho(v_1,\ldots,v_m) \phi_k  dd^c \psi_k \wedge R  .\]
    Let $\widetilde{\rho} (v_1,\ldots,v_m,\phi_k) = \rho(v_1,\ldots,v_m)\cdot \phi_k$. Applying Theorem~\ref{intbypastpsh} for $\widetilde{\rho}$, we get
    \begin{align*}
        -\text{LHS} & = \lim_{k\rightarrow \infty} \sum_{j=1}^m \int_X u\phi_k \partial_j\rho(v_1,\ldots,v_m) d v_j \wedge d^c \psi_k \wedge R  +  \int_X u  \rho(v_1,\ldots,v_m) \phi_k  dd^c \psi_k \wedge R \\
        &+ \int_X u \phi_k \rho(v_1,\ldots,v_m) d \phi_k \wedge d^c \psi_k \wedge R .
    \end{align*}
    We now estimate similarly to $I_{1,k}$ in the proof of Lemma~\ref{convergencenonpluri} to obtain that the last term goes to zero as $k\rightarrow \infty$. The proof is complete.
\end{proof}

\section{Cauchy-Schwarz inequality}\label{section CS}

We now prove a Cauchy-Schwarz-type inequality for complex Sobolev functions. This inequality will be used to obtain energy estimates in the next section.

\begin{theorem}\label{CSdsh}
Let $(X,\omega)$ be a compact K\"ahler manifold of dimension $n$. Let $u$ be a bounded function in $W^{1,2}_*(X)$, and $\psi$ be a bounded dsh function. Let $f,g$ be bounded Borel functions on $X$. Let $T$ be a closed positive $(1,1)$-current on $X$ such that $du\wedge d^c u \leq T$. Let $\theta_1,\ldots,\theta_{n-1}$ be closed $(1,1)$-forms in big cohomology classes. Let $\varphi_j \in \PSH(X,\theta_j)$ for $j=1,\ldots,n-1$. Let $R =  (\theta_1 + dd^c \varphi_1 )\wedge \cdots \wedge ( \theta_{n-1} + dd^c \varphi_{n-1} )$.
Then we have
    \begin{equation}\label{CS formula}
        \Big|\int_X fg du\wedge d^c \psi \wedge R \Big| \leq \Big(\int_{X} |f|^2 T\wedge R \Big)^{1/2} \Big( \int_{X} |g|^2 d\psi \wedge d^c \psi \wedge R\Big)^{1/2}.
    \end{equation}
    \end{theorem}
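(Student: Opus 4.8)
The plan is to reduce Theorem~\ref{CSdsh} to the already-established integration by parts formula (Theorem~\ref{integrationbypartdsh}) together with a pointwise Cauchy--Schwarz inequality on regularized data. First I would reduce to the case where $f,g$ are smooth: since $du\wedge d^c\psi\wedge R$, $T\wedge R$ and $d\psi\wedge d^c\psi\wedge R$ are all non-pluripolar (positive, resp.\ signed) measures by Corollary~\ref{nonpluripolarmeasure} and Lemma~\ref{dv wedge dc v control}, and since bounded Borel functions can be approximated in $L^1$ of each of these measures by smooth functions off an arbitrarily small set (using quasi-continuity as in Lemma~\ref{generalconvergence}), it suffices to prove \eqref{CS formula} for $f,g\in\mathscr{C}^\infty(X)$.

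Next, with $f,g$ smooth and fixed, I would run the regularization of Theorem~\ref{regularize}: let $(u_k)$ be the smooth approximants of $u$ with $du_k\wedge d^c u_k\le T_k$, $\|T_k\|_X\le C\|T\|_X$, $u_k$ uniformly bounded, and also approximate $\psi=\varphi_1-\varphi_2$ by the truncations $\psi_m=\max(\varphi_1,-m)-\max(\varphi_2,-m)$ damped by $\phi_m$ exactly as in Section~\ref{bounded dsh section}. For \emph{smooth} $u_k$ and \emph{bounded quasi-psh} truncations the wedge products are honest, and the classical pointwise Cauchy--Schwarz for the positive form $R$ gives, for each smooth $1$-form decomposition,
\[
\Big|\int_X fg\,du_k\wedge d^c(\phi_m\psi_m)\wedge R\Big|
\le\Big(\int_X|f|^2\,du_k\wedge d^c u_k\wedge R\Big)^{1/2}\Big(\int_X|g|^2\,d(\phi_m\psi_m)\wedge d^c(\phi_m\psi_m)\wedge R\Big)^{1/2}.
\]
Here I bound the first factor by $\big(\int_X|f|^2\,T_k\wedge R\big)^{1/2}$, and for the second I expand $d(\phi_m\psi_m)\wedge d^c(\phi_m\psi_m)$ via the same Cauchy--Schwarz trick used in Lemma~\ref{convergencenonpluri} and Lemma~\ref{key}, controlling the cross terms by $\int_X d\phi_m\wedge d^c\phi_m\wedge R$ and $\int_X d\psi_m\wedge d^c\psi_m\wedge R$, the former tending to $0$ by \cite[Theorem 2.2]{Viet-convexity-weightedclass} and the latter bounded by $2C\|\psi\|_{L^\infty}\|R\|_X$ via Lemma~\ref{dv wedge dc v control}.

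Then I would pass to the limit in two stages. Letting $k\to\infty$: the left side converges by Lemma~\ref{generalconvergence} (or directly by Lemma~\ref{dudcwedgeR} since $f,g$ are smooth), and $\int_X|f|^2\,T_k\wedge R\to\int_X|f|^2\,T\wedge R$ because $T_k\to T$ weakly with uniformly bounded mass and $R$ is closed positive (here one uses that $T$ carries no mass on the polar set of $R$, i.e.\ the non-pluripolarity again, plus $|f|^2$ continuous). This yields the inequality with $\psi$ replaced by $\phi_m\psi_m$ and $T_k$ by $T$; then letting $m\to\infty$, Lemma~\ref{key} handles the left side and the right side converges to $\int_X|g|^2\,d\psi\wedge d^c\psi\wedge R$ once the spurious $\phi_m$-terms vanish. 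Finally I would undo the smoothing of $f,g$. The step I expect to be the main obstacle is the joint limit control of the \emph{right-hand} quadratic form: making sure that $\int_X|g|^2\,d(\phi_m\psi_m)\wedge d^c(\phi_m\psi_m)\wedge R\to\int_X|g|^2\,d\psi\wedge d^c\psi\wedge R$ and not to something larger, which requires carefully combining the monotone convergence of the truncated Monge--Amp\`ere-type currents with the vanishing of the $d\phi_m$ contributions, exactly the delicate bookkeeping already carried out in the proof of Lemma~\ref{convergencenonpluri}; the interchange of the $k$- and $m$-limits must be organized so that each uses only convergences established in Section~\ref{intbypart}.
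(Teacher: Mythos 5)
Your proposal takes a genuinely different route from the paper, and I think it has a substantive gap at the step where you try to recover $T$ from the regularized bounds $T_k$.

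The paper does \emph{not} smooth $u$ to prove Theorem~\ref{CSdsh}; it keeps $u\in W^{1,2}_*(X)$ throughout so that the bounding current $T$ in $du\wedge d^c u\le T$ is preserved exactly. The chain of reductions is: start from Lemma~\ref{CSbounded} (taken from \cite{Vu-diameter}, already stated for $u\in W^{1,2}_*$ but only for $R$ with bounded potentials), sharpen to non-pluripolar products (Corollary~\ref{refineCSbounded}), extend to $R$ of minimal singularities by exhausting the ample locus by open sets where the potentials are locally bounded (Lemma~\ref{CSminimalsing}), extend to general $R$ by the truncations $R^{(l)}$ weighted by $\phi_l$ (Corollary~\ref{converge4}, Lemma~\ref{CSquasipsh}), and finally pass from bounded quasi-psh differences $\psi_k$ to bounded dsh $\psi$. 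In every step the right-hand side stays $\int|f|^2\,T\wedge R$ with the \emph{same} $T$.

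Your scheme instead replaces $u$ by the smooth approximants $u_k$ from Theorem~\ref{regularize} and bounds the first factor by $\big(\int_X|f|^2\,T_k\wedge R\big)^{1/2}$. Here is the gap: Theorem~\ref{regularize} only asserts $du_k\wedge d^c u_k\le T_k$ with $\|T_k\|_X\le C\|T\|_X$; it does \emph{not} assert $T_k\to T$ weakly, and the construction in its proof (a convolution of $T$ against $K_k^++K_k^-$ rescaled by $\|K_k(x,\cdot)\|_{L^1}$) gives a sequence whose weak limit is a closed positive current that is in general strictly larger than $T$, not $T$ itself. Even granting a weak limit $\widehat T$, the wedge with the singular measure $R$ does not commute with weak convergence, so the passage $\int_X|f|^2\,T_k\wedge R\to\int_X|f|^2\,T\wedge R$ is not justified. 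At best your argument yields \eqref{CS formula} with $T$ replaced by some $\widehat T\ge T$ satisfying $\|\widehat T\|_X\le C\|T\|_X$, i.e.\ an inequality with a spurious multiplicative constant $C$ on the right, which is weaker than the stated theorem. A secondary issue is that what you call the ``classical pointwise Cauchy--Schwarz for the positive form $R$'' is not pointwise when $R$ is a non-pluripolar product of currents with unbounded potentials; this is precisely the content of Lemma~\ref{CSbounded} and its extensions, and the paper had to localize to the ample locus and truncate $R$ to justify it. That part of your plan is repairable by invoking those lemmas, but the $T_k\to T$ step is not.
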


This theorem does not follow from the usual Cauchy-Schwarz inequality since $u$ and $\psi$ are not in the same space. 
We now comment on the proof of Theorem~\ref{CSdsh}. First, we note that when $R$ and $\psi$ are less singular, this theorem has been proved in \cite{Vu-diameter}. We recall this version here.

\begin{lemma}\cite[Lemma 2.13]{Vu-diameter}\label{CSbounded} Let $u,f,g$ be functions as in Theorem~\ref{CSdsh}. Suppose that $R$ is the product of $n-1$ closed positive $(1,1)$-currents with bounded potential, and $\psi$ is the difference of two bounded quasi-psh functions. Then we have
    \[
        \Big|\int_X fg du\wedge d^c \psi \wedge R \Big| \leq \Big(\int_{X} |f|^2 T\wedge R \Big)^{1/2} \Big( \int_{X} |g|^2 d\psi \wedge d^c \psi \wedge R\Big)^{1/2}.
    \]
\end{lemma}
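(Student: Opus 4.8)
\smallskip
\noindent\emph{Proof proposal for Lemma~\ref{CSbounded}.} The inequality is a Cauchy--Schwarz estimate for the positive semi-definite, measure-valued quadratic form $v \mapsto dv\wedge d^c v\wedge R$, combined with the domination $du\wedge d^c u \le T$. I would first prove it in the fully regularized case where $u$, $\psi$ and the local potentials of the $n-1$ currents composing $R$ are all smooth, and then remove the regularizations one factor at a time; the only delicate removal is that of $u$, which I would handle by a \emph{local} convolution so as to keep the constant in the estimate equal to $1$.

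\textbf{The smooth model case.} Assume $u,\psi$ are smooth and $R$ is a smooth strictly positive $(n-1,n-1)$-form. Fix a volume form $dV$ and write $q_v := (dv\wedge d^c v\wedge R)/dV \ge 0$ and $\beta_{v,w} := (dv\wedge d^c w\wedge R)/dV$ for the densities; at each point $t\mapsto q_{tu+\psi} = t^2 q_u + 2t\,\beta_{u,\psi} + q_\psi$ is a nonnegative quadratic polynomial, so $\beta_{u,\psi}^2 \le q_u q_\psi$ pointwise. Multiplying by $|fg|$, integrating against $dV$, and applying the Cauchy--Schwarz inequality for integrals gives $\big|\int_X fg\,du\wedge d^c\psi\wedge R\big| \le (\int_X f^2\,du\wedge d^c u\wedge R)^{1/2}(\int_X g^2\,d\psi\wedge d^c\psi\wedge R)^{1/2}$, and $du\wedge d^c u\wedge R \le T\wedge R$ then upgrades the first factor to $(\int_X f^2\,T\wedge R)^{1/2}$.

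\textbf{Removing the regularizations.} \emph{(i) $f,g$.} Applied with continuous $f,g$, the estimate just obtained shows that $du\wedge d^c\psi\wedge R$ is absolutely continuous with respect to the finite positive measure $\mu := T\wedge R + d\psi\wedge d^c\psi\wedge R$, with Radon--Nikodym density bounded by the geometric mean of the densities of $T\wedge R$ and $d\psi\wedge d^c\psi\wedge R$; hence it suffices to treat continuous $f,g$ and approximate general bounded Borel $f,g$ in $L^2(\mu)$. \emph{(ii) $R$.} Write each factor of $R$ as $\beta_i + dd^c u_i$ with $u_i$ bounded quasi-psh and replace $u_i$ by a decreasing sequence of smooth quasi-psh approximants (Demailly regularization on the compact K\"ahler $X$); Bedford--Taylor continuity of mixed Monge--Amp\`ere products along decreasing sequences of bounded potentials yields convergence of $T\wedge R$, of $d\psi\wedge d^c\psi\wedge R$ and of $\int fg\,du\wedge d^c\psi\wedge R$, the last via the integration-by-parts definition of this current (dualizing onto smooth test objects). \emph{(iii) $\psi$.} Writing $\psi = \varphi_1 - \varphi_2$ with $\varphi_i$ bounded quasi-psh, bilinearity of $(v,w)\mapsto dv\wedge d^c w\wedge R$ reduces the dsh case to finitely many instances in which $\psi$ equals one of $\varphi_1,\varphi_2,\varphi_1+\varphi_2$, i.e. is bounded quasi-psh; regularize that function by a decreasing sequence of smooth ones and pass to the limit again by Bedford--Taylor continuity. \emph{(iv) $u$.} On a finite chart cover $\{\mathcal{U}_\alpha\}$ with partition of unity $\{\chi_\alpha\}$ set $u_\varepsilon := u * \eta_\varepsilon$ locally, $\eta_\varepsilon$ a standard mollifier; the elementary inequality $du_\varepsilon\wedge d^c u_\varepsilon \le (du\wedge d^c u) * \eta_\varepsilon \le T * \eta_\varepsilon =: T_\varepsilon$ holds and $T_\varepsilon \to T$ weakly. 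Applying the smooth model case locally (with the now-smooth $\psi$ and $R$), gluing via $\{\chi_\alpha\}$, and letting $\varepsilon\to 0^+$ gives the inequality with $T_\varepsilon\wedge R \to T\wedge R$ on the right, so the constant remains exactly $1$.

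\textbf{Main obstacle.} Step (iv) is the crux. One must control the error terms produced by $d$ and $d^c$ of the cutoff functions $\chi_\alpha$ in the gluing: these have the shape $\int u_\varepsilon\,(\text{bounded }1\text{-form})\wedge d^c\psi\wedge R$ or involve $du_\varepsilon\wedge(\cdots)\wedge R$, and are bounded uniformly in $\varepsilon$ by Cauchy--Schwarz together with the energy bound $\int_X du_\varepsilon\wedge d^c u_\varepsilon\wedge R \le \int_X T_\varepsilon\wedge R \lesssim \|T\|_X\,\|R\|_X$, while tending to $0$ because $u_\varepsilon \to u$ in $L^2$ and $R$ is non-pluripolar. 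One must also justify the weak convergence $du_\varepsilon\wedge d^c\psi\wedge R \to du\wedge d^c\psi\wedge R$ tested against continuous weights, which rests on the quasi-continuity of functions in $W^{1,2}_*(X)$ and capacity bounds, exactly in the spirit of the convergence arguments of Section~\ref{intbypart}. Finally, keeping the order of the four limiting procedures compatible (regularize $R$ and $\psi$ first, then $u$, then relax $f$ and $g$) is the remaining bookkeeping point.
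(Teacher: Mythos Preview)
Your proposal is correct and follows essentially the same route the paper attributes to \cite[Lemma~2.13]{Vu-diameter}: a pointwise Cauchy--Schwarz in the smooth model, then a local-to-global gluing, with the regularizations of $R$, $\psi$, and $u$ peeled off by Bedford--Taylor-type continuity. The paper does not reprove the lemma here; it only records that the original argument (for bounded quasi-psh $\psi$) proceeds by ``a local Cauchy--Schwarz inequality and then \dots a gluing argument'', and remarks that the extension to $\psi=\varphi_1-\varphi_2$ with $\varphi_1,\varphi_2$ bounded quasi-psh is immediate---exactly your bilinearity reduction in step~(iii).

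One simplification for your step~(iv): you do not need to produce a single global smooth approximant of $u$ and then chase the error terms carrying $d\chi_\alpha$. With $\psi$ and $R$ already smooth, on each chart $\mathcal U_\alpha$ the local mollification $u_\varepsilon^\alpha$ satisfies $du_\varepsilon^\alpha\to du$ in $L^2_{\rm loc}$, so
\[
\Big|\int_X \chi_\alpha\, fg\, du\wedge d^c\psi\wedge R\Big|
\;\le\;
\Big(\int_X \chi_\alpha\, |f|^2\, T\wedge R\Big)^{1/2}
\Big(\int_X \chi_\alpha\, |g|^2\, d\psi\wedge d^c\psi\wedge R\Big)^{1/2}
\]
follows by passing to the limit in the smooth model inequality applied with weights $\chi_\alpha^{1/2}f$ and $\chi_\alpha^{1/2}g$. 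Summing over $\alpha$ and using the discrete Cauchy--Schwarz inequality $\sum_\alpha a_\alpha^{1/2}b_\alpha^{1/2}\le(\sum_\alpha a_\alpha)^{1/2}(\sum_\alpha b_\alpha)^{1/2}$ together with $\sum_\alpha\chi_\alpha=1$ gives the global estimate with constant exactly~$1$, and no derivatives of $\chi_\alpha$ ever appear.
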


In \cite{Vu-diameter}, the author proved Lemma~\ref{CSbounded} in the case when $\psi$ is a bounded quasi-psh function. However, the arguments presented there can be easily extended to this setting. The wedge product $T \wedge R$ in Lemma~\ref{CSbounded} is in the classical sense. However, since $du\wedge d^c \psi \wedge R$ puts no mass on pluripolar sets, we can switch to the non-pluripolar product.

\begin{corollary}\label{refineCSbounded}
Suppose $R$ and $\psi$ satisfy the conditions in Lemma~\ref{CSbounded}. Then we have
    \[ \Big| \int_X fg du\wedge d^c \psi \wedge R \Big| \leq \Big( \int_X |f|^2 \left \langle T \wedge R  \right \rangle \Big)^{1/2} \Big(\int_X |g|^2 d\psi \wedge d^c \psi \wedge R \Big)^{1/2}. \]
\end{corollary}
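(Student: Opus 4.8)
\textbf{Proof proposal for Corollary~\ref{refineCSbounded}.}

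The plan is to deduce the corollary from Lemma~\ref{CSbounded} by replacing the classical wedge product $T\wedge R$ on the right-hand side by the non-pluripolar product $\left\langle T\wedge R\right\rangle$, which is legitimate precisely because the left-hand side and the measure $d\psi\wedge d^c\psi\wedge R$ are insensitive to pluripolar sets. First I would recall the general fact that for a closed positive $(1,1)$-current $T$ and a closed positive $(n-1,n-1)$-current $R$ which is a product of currents with bounded potentials, one has $\left\langle T\wedge R\right\rangle \leq T\wedge R$ as measures, with equality outside the pluripolar set $P$ where the local potentials of $T$ fail to be locally bounded; more precisely, writing $T = \beta + dd^c\varphi$ with $\beta$ smooth and $\varphi$ quasi-psh, the classical product $T\wedge R$ is well-defined since $R$ has bounded potentials, and $T\wedge R = \left\langle T\wedge R\right\rangle + \mathbf{1}_P\,T\wedge R$, where $\mathbf{1}_P\,T\wedge R$ is carried by the pluripolar set $P = \{\varphi = -\infty\}$.

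The second step is to observe that $P$ is $(d\psi\wedge d^c\psi\wedge R)$-null and that $du\wedge d^c\psi\wedge R$ puts no mass on $P$: indeed by Corollary~\ref{nonpluripolarmeasure} (applicable here since the hypotheses of Lemma~\ref{CSbounded} are stronger than those of that corollary), $du\wedge d^c\psi\wedge R$ is a non-pluripolar measure, and $d\psi\wedge d^c\psi\wedge R = dd^c(\psi^2)\wedge R - 2\psi\,dd^c\psi\wedge R$ is a difference of non-pluripolar measures, hence itself charges no pluripolar set. Consequently, in the conclusion of Lemma~\ref{CSbounded} I may freely restrict all three integrals to $X\setminus P$ without changing their values, and on $X\setminus P$ the classical and non-pluripolar products of $T$ and $R$ agree. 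Therefore
\[
\int_X |f|^2\, T\wedge R \;\geq\; \int_{X\setminus P} |f|^2\, T\wedge R \;=\; \int_{X\setminus P} |f|^2 \left\langle T\wedge R\right\rangle \;=\; \int_X |f|^2 \left\langle T\wedge R\right\rangle,
\]
and plugging this into Lemma~\ref{CSbounded} gives exactly the claimed inequality.

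I expect the only genuinely delicate point to be the bookkeeping in the second step: one must make sure that the measure $du\wedge d^c\psi\wedge R$ appearing on the left is the same object whether computed as a classical or non-pluripolar wedge product, and that Corollary~\ref{nonpluripolarmeasure} indeed applies under the hypotheses of Lemma~\ref{CSbounded} (it does, since "product of currents with bounded potentials" is a special case of "product of $\theta_j$-psh currents" and "difference of bounded quasi-psh" is a bounded dsh function). Once this identification is in place, the estimate is a one-line monotonicity argument, since passing from $T\wedge R$ to $\left\langle T\wedge R\right\rangle$ only decreases the right-hand side. No further regularization is needed beyond what already went into Lemma~\ref{CSbounded}.
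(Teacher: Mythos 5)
Your underlying idea is the same as the paper's: the measure $du\wedge d^c\psi\wedge R$ on the left and $d\psi\wedge d^c\psi\wedge R$ on the right both put no mass on pluripolar sets, while the classical product $T\wedge R$ may charge the polar set $P$ of $T$; cutting $P$ away from the first right-hand factor replaces $T\wedge R$ by $\langle T\wedge R\rangle$, which is what one wants. But the way you finish the argument is logically backwards. Your displayed chain establishes
\[
\int_X |f|^2 \left\langle T\wedge R\right\rangle \;\leq\; \int_X |f|^2\, T\wedge R,
\]
and you then say that plugging this into Lemma~\ref{CSbounded} gives the claim. It does not: that inequality only shows that the quantity you want on the right-hand side of the corollary is \emph{smaller} than the right-hand side of Lemma~\ref{CSbounded}, so it cannot serve as an upper bound for $\Big|\int_X fg\,du\wedge d^c\psi\wedge R\Big|$ by monotonicity — the reduction would have to go in the opposite direction. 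Likewise, the intermediate claim that you may ``restrict all three integrals to $X\setminus P$ without changing their values'' is false for $\int_X |f|^2\,T\wedge R$, which is precisely the integral that can shrink.

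The correct one-line repair, which is what the paper does, is to apply Lemma~\ref{CSbounded} with the Borel function $f$ replaced by $\mathbf{1}_{X\setminus P}f$ (and $g$ unchanged; $f,g$ are arbitrary bounded Borel functions, so this is legitimate). The left-hand side is unchanged because $du\wedge d^c\psi\wedge R$ charges no pluripolar set; the second right-hand factor is unchanged for the same reason; and the first right-hand factor becomes $\int_X \mathbf{1}_{X\setminus P}|f|^2\,T\wedge R = \int_X |f|^2\,\langle T\wedge R\rangle$, which is the identity the paper takes from \cite[Proposition 3.6 (ii)]{Viet-generalized-nonpluri}. That single application yields the corollary outright; no monotonicity step is needed or available. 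Your preparatory observations — non-pluripolarity of the two measures and $T\wedge R=\langle T\wedge R\rangle$ on $X\setminus P$ — are exactly the right ingredients; you simply need to feed $\mathbf{1}_{X\setminus P}f$ into the lemma rather than try to shrink its right-hand side after the fact.
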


\begin{proof}
Let $A$ be the singular set of $T$. Since $du\wedge d^c \psi \wedge R$ puts no mass on pluripolar sets, by Lemma~\ref{CSbounded}, we have
\begin{align*} 
\Big| \int_X fg du\wedge d^c \psi \wedge R \Big| & = \Big| \int_X \mathbf{1}_{X\setminus A}fg du\wedge d^c \psi \wedge R\Big|\\&\leq \Big( \int_X |f|^2\mathbf{1}_{X\setminus A} T\wedge R \Big)^{1/2} \Big(\int_X |g|^2 d\psi \wedge d^c \psi \wedge R \Big)^{1/2} \\
&= \Big( \int_X |f|^2 \left \langle T\wedge R  \right \rangle \Big)^{1/2} \Big(\int_X |g|^2 d\psi \wedge d^c \psi \wedge R \Big)^{1/2}.
\end{align*}
Here, for the last line, we use \cite[Proposition 3.6. (ii)]{Viet-generalized-nonpluri}. The corollary follows.
\end{proof}

To prove Theorem~\ref{CSdsh}, we will follow a standard strategy. The first step is to generalize Corollary~\ref{refineCSbounded} to the case when $R$ is the product of closed positive $(1,1)$-currents of minimal singularities, and $\psi$ is the difference of two bounded quasi-psh functions (see Lemma~\ref{CSminimalsing}). Then we approximate $R$ by products of currents of minimal singularities (see Lemma~\ref{converge4}) and prove the Cauchy-Schwarz inequality in the case when $\psi$ is the difference of two bounded quasi-psh functions (see Lemma~\ref{CSquasipsh}). Finally, we approximate the bounded dsh function $\psi$ by $\psi_k$, which are differences of bounded quasi-psh functions (similar to Section~\ref{intbypart}), and obtain Theorem~\ref{CSdsh}. Let us now go into detail.

\begin{lemma}\label{CSminimalsing}
Suppose that $\varphi_j$ has minimal singularities for $j=1,\ldots,n$, and $\psi$ is the difference of two bounded quasi-psh functions on $X$. Then \eqref{CS formula} holds.
\end{lemma}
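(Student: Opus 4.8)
The plan is to reduce to the already-established bounded-potential case (Corollary~\ref{refineCSbounded}) by approximating the minimal-singularity currents $\theta_j + dd^c\varphi_j$ with currents of bounded potentials, while simultaneously verifying that all the wedge products converge in the appropriate sense. Concretely, set $\varphi_{j,k} := \max(\varphi_j, V_{\theta_j} - k)$, so that $\theta_j + dd^c\varphi_{j,k}$ has bounded potential (here one uses that $\varphi_j$ has minimal singularities, hence $\varphi_j - V_{\theta_j}$ is merely bounded below, and the truncation clips it), and put $R_k := (\theta_1 + dd^c\varphi_{1,k}) \wedge \cdots \wedge (\theta_{n-1} + dd^c\varphi_{n-1,k})$. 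Then $\mathbf{1}_{\bigcap_j\{\varphi_j > V_{\theta_j} - k\}} R_k$ increases to $R$ by the monotone convergence property of non-pluripolar products recalled after \eqref{define R}. Similarly, truncating $\psi = \varphi_1' - \varphi_2'$ is not needed since $\psi$ is already assumed to be a difference of two bounded quasi-psh functions, so Corollary~\ref{refineCSbounded} applies verbatim with $R_k$ in place of $R$.

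The key steps, in order, would be: first, apply Corollary~\ref{refineCSbounded} to get, for each $k$,
\[
\Big| \int_X fg\, du\wedge d^c \psi \wedge R_k \Big| \leq \Big( \int_X |f|^2 \langle T \wedge R_k \rangle \Big)^{1/2} \Big( \int_X |g|^2 d\psi \wedge d^c \psi \wedge R_k \Big)^{1/2};
\]
second, pass to the limit on the left-hand side by showing $du \wedge d^c\psi \wedge R_k \to du\wedge d^c\psi\wedge R$ weakly (or at least, that the pairing against the bounded Borel $fg$ converges), using Lemma~\ref{dudcwedgeR}-type arguments together with the fact, from Corollary~\ref{boundbycap} and Corollary~\ref{nonpluripolarmeasure}, that $du\wedge d^c\psi\wedge R_k$ are dominated uniformly by $f(\capa_\omega(\cdot))$ so that no mass escapes to the pluripolar locus where $R_k$ fails to increase to $R$; third, pass to the limit on the right-hand side — for the second factor, $d\psi\wedge d^c\psi\wedge R_k \to d\psi\wedge d^c\psi\wedge R$ follows from the monotonicity (after restricting to $\bigcap_j\{\varphi_j > V_{\theta_j}-k\}$) plus $d\psi\wedge d^c\psi$ being a bounded signed measure coefficient-wise, and for the first factor one uses that $\langle T\wedge R_k\rangle \to \langle T\wedge R\rangle$, which again is the monotone convergence of non-pluripolar products, now with the closed positive $(1,1)$-current $T$ adjoined.

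The main obstacle I anticipate is the convergence of $\int_X |f|^2 \langle T\wedge R_k\rangle$ and of $\int_X fg\, du\wedge d^c\psi\wedge R_k$, because $f$ and $g$ are only bounded Borel functions, not continuous, so weak convergence of the currents does not immediately give convergence of these integrals. The resolution should be a quasi-continuity / capacity argument in the spirit of Lemma~\ref{generalconvergence}: approximate $f, g$ in capacity by continuous functions off a small open set $E$ with $\capa_\omega(E) < \varepsilon$, use the uniform domination $|du\wedge d^c\psi\wedge R_k|(E) \leq f_0(\capa_\omega(E))$ (with $f_0$ continuous, $f_0(0)=0$) coming from Corollary~\ref{boundbycap} applied with each $R_k$ — here one needs the constant in that corollary to be uniform in $k$, which holds because $\|R_k\|_X \le \|R\|_X$ and $\|\psi\|_{L^\infty}$ is fixed — and then let $\varepsilon \to 0$. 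For the $\langle T\wedge R_k\rangle$ term one similarly needs that these measures do not concentrate on pluripolar sets uniformly, which follows from $\langle T\wedge R_k\rangle \le \langle T\wedge R\rangle$ pointwise on $\bigcap_j\{\varphi_j > V_{\theta_j}-k\}$ together with the fact that $\langle T\wedge R\rangle$ is non-pluripolar. Once these two convergences are in hand, letting $k \to \infty$ in the displayed inequality yields \eqref{CS formula}, completing the proof.
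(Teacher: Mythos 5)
There is a genuine gap at the very first step of your reduction. You set $\varphi_{j,k} := \max(\varphi_j, V_{\theta_j}-k)$ and claim this produces a current of \emph{bounded} potential. This is false on two counts. First, since $\varphi_j$ has minimal singularities, $\varphi_j - V_{\theta_j}$ is bounded (above and below), so $\max(\varphi_j, V_{\theta_j}-k) = \varphi_j$ as soon as $k$ exceeds $\|\varphi_j - V_{\theta_j}\|_{L^\infty}$; the truncation does literally nothing, and $R_k = R$ for large $k$. Second, and more fundamentally, ``bounded potential'' would require $V_{\theta_j}$ itself to be bounded, but for a general big class $V_{\theta_j}$ is $-\infty$ on the non-ample locus — that is precisely the obstruction the lemma is designed to overcome. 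So Corollary~\ref{refineCSbounded} cannot be applied to $R_k$, and the entire reduction collapses before the capacity arguments in your later steps even enter.

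The paper instead exploits the fact that $V_{\theta_j}$ is \emph{locally} bounded on the ample locus $\amp(\theta_j)$, so that minimal-singularity potentials are locally bounded on $\Omega := \bigcap_j \amp(\theta_j)$, whose complement is a closed pluripolar (indeed analytic) set. One then takes a relatively compact exhaustion $\Omega_\beta \Subset \Omega$, applies the local Cauchy--Schwarz inequality (the local version underlying Lemma~\ref{CSbounded}/Corollary~\ref{refineCSbounded}) on charts inside $\Omega_{\beta+1}$ and glues, and finally lets $\beta \to \infty$, using Corollary~\ref{boundbycap} to see that no mass of $du \wedge d^c\psi \wedge R$ escapes to $X\setminus\Omega$. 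Truncation of potentials against $V_{\theta_j}$ \emph{is} used elsewhere in the paper (Lemma~\ref{converge3}, Corollary~\ref{converge4}), but there the point is to pass from minimal singularities to more general $\varphi_j$, not to reach bounded potentials; the step your proposal needs — bounded potentials — is achieved by restricting to the ample locus, not by truncating.
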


\begin{proof}

Denote by $\amp(\theta_j)$ the ample locus of $\{\theta_j\}$. Put $R_j : = \theta_j + dd^c \varphi_j$. Define $\Omega:= \bigcap_{j=1}^{n-1} \amp(\theta_j)$. Since $X\setminus \amp(\theta_j)$ is an analytic set for $j=1,\ldots,n$, we infer that $X\setminus \Omega$ is a closed pluripolar set.  Let $\{\mathcal{U}_\alpha\}_{\alpha=1}^m$ be a finite covering of $X$ by local charts. Then $\{\mathcal{V}_\alpha\}_{\alpha=1}^m$ is a finite covering of $\Omega$ by local charts, where $\mathcal{V}_\alpha = \mathcal{U}_\alpha \cap \Omega$. Let $\mathcal{V}_{\alpha,1} \Subset \mathcal{V}_{\alpha,2} \Subset \cdots \Subset \mathcal{V}_{\alpha}$ be an approximation of $\mathcal{V}_\alpha$ by relatively compact subsets. For $\beta \in \N$, let $\Omega_\beta = \bigcup_{\alpha=1}^m \mathcal{V}_{\alpha,\beta}$. Since $\mathcal{V}_{\alpha,\beta} \Subset \Omega$, we have $\Omega_\beta \Subset \Omega$. Hence, $R_j$ has bounded potential on $\Omega_\beta$.

By Corollary~\ref{boundbycap}, we have
\[\int_X fg du\wedge d^c \psi \wedge R = \int_{\Omega} fg du\wedge d^c \psi \wedge R = \lim_{\beta\rightarrow \infty} \int_{\Omega_\beta} fgdu\wedge d^c \psi \wedge R.\]
Recall that to prove Lemma~\ref{CSbounded}, one prove a local Cauchy-Schwarz inequality and then pass to the global version by a gluing argument (see the proof of \cite[Lemma 2.13]{Vu-diameter}). Thus, we can apply the Corollary~\ref{refineCSbounded} for $\mathcal{V}_{\alpha,\beta} \Subset \mathcal{V}_{\alpha,\beta+1}$ and glue them to get
\begin{align*}\int_{\Omega_\beta} fgdu\wedge d^c \psi \wedge R &\leq \Big(\int_{\Omega_{\beta+1}} |f|^2 T\wedge R \Big)^{1/2} \Big( \int_{\Omega_{\beta+1}} |g|^2 d\psi \wedge d^c \psi \wedge R\Big)^{1/2} \\ &\leq \Big(\int_{X} |f|^2 T\wedge R \Big)^{1/2} \Big( \int_{X} |g|^2 d\psi \wedge d^c \psi \wedge R\Big)^{1/2}.
\end{align*}
Letting $\beta\rightarrow \infty$, the proof is complete.
\end{proof}

 For $l>0$, define
\[R^{(l)} := (\theta_1 + dd^c ( \max (\varphi_1,V_{\theta_1}-l) ) ) \wedge \cdots \wedge (\theta_{n-1} + dd^c ( \max (\varphi_{n-1},V_{\theta_{n-1}}-l) ) ).\]
Let $\phi_l:= l^{-1} \max (\varphi_1+\cdots + \varphi_{n-1}, -l )+1$. It is standard that 
\begin{equation}\label{converge2}
    \phi_l R^{(l)}\wedge T  \text{ increases to } R\wedge T \text{ for every closed positive $(1,1)$-current } T.
\end{equation}

\begin{lemma}\label{converge3} $\phi_ldu\wedge d^c \psi \wedge R^{(l)} \rightarrow du\wedge d^c \psi \wedge R$ weakly as currents.
\end{lemma}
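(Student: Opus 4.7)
\emph{Proof plan.} My plan is to establish the claimed weak convergence by testing against an arbitrary smooth function $\chi$ on $X$. By linearity of all constructions in $\psi$, I may first reduce to the case where $\psi$ is a bounded quasi-psh function (writing a bounded dsh $\psi = \varphi_1' - \varphi_2'$ and treating each component separately). Applying Theorem~\ref{intbypastpsh} to the non-pluripolar product $R^{(l)}$---which is legitimate since its potentials $\max(\varphi_j, V_{\theta_j} - l)$ are of minimal singularities---with test function $v = \chi\phi_l \in W^{1,2}_*(X)$ and $\rho(v) = v$, I obtain
\[
\int_X \chi\phi_l\, du\wedge d^c\psi\wedge R^{(l)} = A_l + B_l + C_l,
\]
where $A_l := -\int u\phi_l\, d\chi\wedge d^c\psi\wedge R^{(l)}$, $B_l := -\int u\chi\, d\phi_l\wedge d^c\psi\wedge R^{(l)}$, and $C_l := -\int u\chi\phi_l\, dd^c\psi\wedge R^{(l)}$. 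In view of the defining formula for $du\wedge d^c\psi\wedge R$ tested against $\chi$, it suffices to prove (a) $A_l \to -\int u\, d\chi\wedge d^c\psi\wedge R$, (b) $B_l \to 0$, and (c) $C_l \to -\int u\chi\, dd^c\psi\wedge R$.

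The estimate $B_l \to 0$ will follow from Cauchy-Schwarz (Lemma~\ref{CSminimalsing}, applicable because $R^{(l)}$ has minimal-singularity potentials):
\[
|B_l|^2 \leq \|u\chi\|_\infty^2 \int_X d\phi_l\wedge d^c\phi_l\wedge R^{(l)} \cdot \int_X d\psi\wedge d^c\psi\wedge R^{(l)},
\]
where the second factor is bounded uniformly in $l$ via Lemma~\ref{dv wedge dc v control} and the first tends to $0$ by the same argument used to treat $I_{1,k}$ in the proof of Lemma~\ref{convergencenonpluri}, combined with \eqref{converge2} and the fact that $\phi_l$ increases to $1$ outside the pluripolar set $\{\varphi_1 + \cdots + \varphi_{n-1} = -\infty\}$. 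For $C_l$ I will decompose $dd^c\psi = (dd^c\psi + C\omega) - C\omega$ as a difference of closed positive $(1,1)$-currents; \eqref{converge2} then yields $\phi_l(dd^c\psi + C\omega)\wedge R^{(l)} \nearrow (dd^c\psi+C\omega)\wedge R$ and similarly $\phi_l C\omega \wedge R^{(l)} \nearrow C\omega\wedge R$, and monotone convergence applied to the positive and negative parts of the bounded Borel function $u\chi$---which is defined almost everywhere since the limiting measures are non-pluripolar---delivers the desired limit.

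The main obstacle is $A_l$, because $d\chi\wedge d^c\psi\wedge R^{(l)}$ is a signed measure for which no direct monotone argument is available. My plan here is to establish the natural analog of \eqref{convergencenonpluri1} in the present ``$R$-approximation'' setting, following the strategy of \cite[proof of Theorem 2.7]{Viet-convexity-weightedclass}. The key algebraic input is the elementary identity
\[
\chi\psi = \tfrac{1}{2}\bigl[(\chi+\psi)^2 - \chi^2 - \psi^2\bigr],
\]
together with the fact that the square of a bounded quasi-psh function is bounded dsh: indeed, if $M = \|\psi\|_\infty$ then $\psi + M \in [0, 2M]$ is bounded quasi-psh, and composing with the convex increasing function $t\mapsto t^2$ on $[0, 2M]$ yields a bounded quasi-psh function, so $\psi^2 = (\psi+M)^2 - 2M\psi - M^2$ is bounded dsh; the same argument applies to $(\chi+\psi)^2$ since $\chi+\psi$ is bounded quasi-psh. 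Hence $\chi\psi$ is bounded dsh, so $dd^c(\chi\psi) = T_1 - T_2$ with $T_i$ closed positive $(1,1)$-currents. Combining this with the symmetric identity valid in bidegree $(n,n)$,
\[
2\,d\chi\wedge d^c\psi\wedge R^{(l)} = dd^c(\chi\psi)\wedge R^{(l)} - \chi\,dd^c\psi\wedge R^{(l)} - \psi\,dd^c\chi\wedge R^{(l)},
\]
each of the three terms on the right is reduced, via \eqref{converge2}, to the same monotone-convergence mechanism as in $C_l$ (for the middle and last term, after splitting $dd^c\chi$ as a difference of closed positive smooth forms). The passage from smooth test functions to the bounded Borel $u$ will then be handled by quasi-continuity of $u$ and a Corollary~\ref{convergence}-type argument, relying on non-pluripolarity of all limit measures. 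Combining (a), (b), (c) yields $A_l + B_l + C_l \to \int_X \chi\, du\wedge d^c\psi\wedge R$, completing the proof.
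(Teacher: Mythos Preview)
Your proposal is correct and follows essentially the same route as the paper: integrate by parts via Theorem~\ref{intbypastpsh} against $\chi\phi_l$, then handle the $d\phi_l$-term by Cauchy--Schwarz (Lemma~\ref{CSminimalsing}) together with $\int d\phi_l\wedge d^c\phi_l\wedge R^{(l)}\to 0$, and treat the remaining two terms by reducing to integrals of bounded Borel functions against the increasing positive measures $\phi_l\,T\wedge R^{(l)}$ of \eqref{converge2}. The only cosmetic difference is that for the $A_l$-term the paper simply asserts that $d\chi\wedge d^c\psi$ can be written as a linear combination of currents of the form $vT$ (with $v$ bounded quasi-psh and $T$ closed positive), whereas you spell out a specific such decomposition via the polarization identity $2\,d\chi\wedge d^c\psi = dd^c(\chi\psi) - \chi\,dd^c\psi - \psi\,dd^c\chi$; your appeal to quasi-continuity of $u$ is also unnecessary here, since monotone convergence of the measures against the bounded Borel integrand $u$ (split into positive and negative parts) already suffices.
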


\begin{proof}
We argue similarly to Lemma~\ref{convergencenonpluri}. Let $\chi$ be a smooth function on $X$. By Theorem~\ref{intbypastpsh}, we have
    \begin{align}\label{converge3 1}\int_X \chi \phi_l du\wedge d^c \psi \wedge R^{(l)} = - \int_X \Big( & u \phi_l\chi dd^c \psi \wedge R^{(l)} + u \phi_l d\chi\wedge d^c \psi \wedge R^{(l)} \\ \nonumber  &+ u \chi d\phi_l \wedge d^c \psi \wedge R^{(l)} \Big).\end{align}
    Let $I_1^{(l)}, I_2^{(l)},$ and $I_3^{(l)}$ be the first, second, and third terms of the right-hand side of \eqref{converge3 1}.

    Assume that $\psi = \gamma_1 - \gamma_2$ where $\gamma_1,\gamma_2$ are bounded quasi-psh functions. Let $C$ be the positive constant such that $\gamma_1,\gamma_2$ are $C\omega$-psh functions. Applying \eqref{converge2} for $T = C\omega, T = C\omega + dd^c \gamma_1, T = C\omega+dd^c\gamma_2$, we get
\[ I_1^{(l)}=\int_X u \phi_l\chi dd^c \psi\wedge R^{(l)} \rightarrow \int_X u\chi dd^c \psi\wedge R \text{ as } l\rightarrow \infty.\]
Writing $d\chi \wedge d^c \psi$ as a linear combination of currents of the form $vT$ where $v$ is a bounded quasi-psh function and $T$ is a closed positive current. Applying \eqref{converge2} again, we get
\[I_2^{(l)} \rightarrow \int_X u d\chi \wedge d^c \psi \wedge R \text{ as } l\rightarrow \infty.\]

    Finally, we estimate $I_3^{(l)}$. By Lemma~\ref{CSminimalsing}, plurifine property, and Lemma~\ref{dv wedge dc v control}, we have
    \begin{align*}\Big| \int_X u \chi d\phi_l \wedge d^c \psi \wedge R^{(l)} \Big| &\leq \|u\chi\|_{L^\infty} \Big( \int_X d\phi_l \wedge d^c \phi_l \wedge R^{(l)} \Big)^{1/2} \Big(\int_X d\psi \wedge d^c \psi \wedge R^{(l)} \Big)^{1/2}  \\
    &\leq \|u\chi\|_{L^\infty} \Big( \int_X d\phi_l \wedge d^c \phi_l \wedge R \Big)^{1/2} \cdot (2 C\|\psi\|_{L^\infty} \|R^{(l)}\|_{X})^{1/2}.
    \end{align*}
    Arguing similarly to Lemma~\ref{convergencenonpluri}, we get $\int_X d\phi_l \wedge d^c \phi_l \wedge R \rightarrow 0$. Moreover, since we can bound uniformly $\|R^{(l)}\|_X$ by a constant depending only on cohomology classes, we infer that $I_3^{(l)} \to 0$ as $l\to \infty$. Thus, the lemma follows by \eqref{converge3 1} and Theorem~\ref{intbypastpsh}.
\end{proof}

\begin{corollary}\label{converge4}
    Let $f$ be a bounded Borel function on $X$, then
    \[\int_X f \phi_l du\wedge d^c \psi \wedge R^{(l)} \rightarrow \int_X f du\wedge d^c \psi \wedge R.\]
\end{corollary}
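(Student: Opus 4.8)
The plan is to deduce this from the weak convergence in Lemma~\ref{converge3} together with a uniform domination of the measures $\phi_l\, du\wedge d^c\psi\wedge R^{(l)}$ by a fixed non-pluripolar measure, in the spirit of the argument used to pass from Lemma~\ref{dudcwedgeR} to Corollary~\ref{boundbycap} and then to Lemma~\ref{generalconvergence}. First I would record, using Cauchy--Schwarz (via Lemma~\ref{CSdsh}, or rather its already-established special case Lemma~\ref{CSminimalsing}) applied to the current $R^{(l)}$, which is a product of currents of minimal singularities, that for every Borel set $E\subset X$
\begin{align*}
\Big| \int_E \phi_l\, du\wedge d^c\psi\wedge R^{(l)} \Big|
&\leq \Big(\int_E du\wedge d^c u \wedge \phi_l R^{(l)}\Big)^{1/2}\Big(\int_E d\psi\wedge d^c\psi\wedge \phi_l R^{(l)}\Big)^{1/2}\\
&\leq \Big(\int_E T\wedge R\Big)^{1/2}\Big(\int_E d\psi\wedge d^c\psi\wedge R\Big)^{1/2},
\end{align*}
where in the last step I use the monotonicity \eqref{converge2}: $\phi_l R^{(l)}\wedge S$ increases to $R\wedge S$ for every closed positive current $S$ (applied to $S=T$ and to $S = C\omega\pm dd^c\varphi_i$ to handle $d\psi\wedge d^c\psi = dd^c(\text{bounded dsh}) + \text{bounded dsh}\cdot(\text{pos. curr.})$). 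Hence the total variations $\mu_l := |\phi_l\, du\wedge d^c\psi\wedge R^{(l)}|$ are all dominated by the single non-pluripolar finite measure $\nu := (T\wedge R + d\psi\wedge d^c\psi\wedge R)$ up to the constant arising from the arithmetic--geometric inequality, and in particular $\mu_l(E)\to 0$ uniformly in $l$ as $\nu(E)\to 0$.

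Given this uniform absolute continuity, I would approximate the bounded Borel function $f$ by a continuous (or smooth) function $\widetilde f$ with $\int_X |f-\widetilde f|\, d\nu < \varepsilon$, which is possible since $\nu$ is a finite Borel measure on a compact metric space and $f$ is bounded. For $\widetilde f$ continuous, Lemma~\ref{converge3} (weak convergence of currents tested against continuous functions after multiplying by $\widetilde f$ — here one should note $\widetilde f$ times the test form is still an admissible, merely continuous, test object, so strictly one tests against smooth $\widetilde f$ and approximates $f$ in two stages, first continuous then smooth, exactly as in the proof of Corollary~\ref{convergence}) gives
\[
\int_X \widetilde f\,\phi_l\, du\wedge d^c\psi\wedge R^{(l)} \;\longrightarrow\; \int_X \widetilde f\, du\wedge d^c\psi\wedge R.
\]
Then I would split
\[
\Big|\int_X f\phi_l\, du\wedge d^c\psi\wedge R^{(l)} - \int_X f\, du\wedge d^c\psi\wedge R\Big|
\]
into the tail $\int_X |f-\widetilde f|\,d\mu_l \leq C\varepsilon$, the tail $\int_X |f-\widetilde f|\, |du\wedge d^c\psi\wedge R| \leq C\varepsilon$ (using Corollary~\ref{nonpluripolarmeasure} so that the limiting measure is also $\ll \nu$), and the middle term which tends to $0$ by the previous display; letting $\varepsilon\to 0^+$ finishes the proof.

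The main obstacle — really the only nontrivial point — is establishing the uniform, $l$-independent domination $\mu_l \leq C f(\nu(\cdot))$ with $f$ continuous and $f(0)=0$; this rests on the Cauchy--Schwarz inequality being available already for $R^{(l)}$ (Lemma~\ref{CSminimalsing}, since the truncated potentials $\max(\varphi_i, V_{\theta_i}-l)$ are of minimal singularities) and on the fundamental monotonicity \eqref{converge2} to replace the $l$-dependent bound by the fixed measures $T\wedge R$ and $d\psi\wedge d^c\psi\wedge R$. Once that bound is in hand, the passage from continuous to bounded Borel integrands is routine and mirrors Corollary~\ref{convergence} and Lemma~\ref{generalconvergence} verbatim.
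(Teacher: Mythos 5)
Your plan matches the paper's proof almost step for step: Lemma~\ref{converge3} handles the case of smooth/continuous $f$, and the uniform Cauchy--Schwarz bound via Lemma~\ref{CSminimalsing} together with the monotonicity \eqref{converge2} provides the $l$-independent domination needed to pass from continuous to general bounded Borel $f$. The one place you diverge slightly is the last reduction: you propose to approximate $f$ by $\widetilde f$ in $L^1(\nu)$ and appeal to ``uniform absolute continuity,'' but the Cauchy--Schwarz bound you derive is of the form $\mu_l(E)\lesssim \big(\nu_1(E)\nu_2(E)\big)^{1/2}$ set-wise, which does not immediately control $\int_X|f-\widetilde f|\,d\mu_l$ from smallness of $\int_X|f-\widetilde f|\,d\nu$; you would either have to add a Chebyshev step (cut at a level $\delta$ and use that $\mu_l(X)$ is uniformly bounded) or apply Cauchy--Schwarz once more directly to $|f-\widetilde f|$. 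The paper sidesteps this by invoking Lusin's theorem simultaneously for the two measures $T\wedge R$ and $d\psi\wedge d^c\psi\wedge R$, producing a closed set $E$ on which $f=\widetilde f$ exactly and whose complement has small mass for both, after which the set-wise Cauchy--Schwarz bound applies verbatim. So your argument is sound once the last step is tightened, and the ingredients (Lemma~\ref{CSminimalsing}, \eqref{converge2}, Lemma~\ref{converge3}) are exactly the paper's.
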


\begin{proof}
When $f$ is continuous, we can approximate $f$ in $\mathscr{C}^0$ topology by smooth functions and use Lemma~\ref{converge3} to complete the proof. For the general case, we apply Lusin's theorem for two measures $T\wedge R$ and $d\psi \wedge d^c \psi \wedge R$ to get a closed subset $E$ of $X$ satisfies
\[\int_{X\setminus E} T\wedge R < \varepsilon \quad \text{and} \quad \int_{X\setminus E} d\psi \wedge d^c \psi \wedge R < \varepsilon\] such that there is a continuous function $\widetilde{f} = f$ on $E$ and $\|\widetilde{f}\|_{L^\infty} \leq \|f\|_{L^\infty}$. By Lemma~\ref{CSminimalsing} and plurifine property, we have
\begin{align*}
    \Big| \int_X|f-\widetilde{f}| \phi_l du \wedge d^c \psi \wedge R^{(l)}  \Big| &\leq 2\|f\|_{L^\infty} \Big( \int_{X\setminus E} \phi_l T\wedge R^{(l)}\Big)^{1/2} \Big(\int_{X\setminus E} \phi_l d\psi \wedge d^c \psi \wedge R^{(l)}\Big)^{1/2} \\
    &\leq 2\|f\|_{L^\infty} \Big( \int_{X\setminus E}  T\wedge R\Big)^{1/2} \Big(\int_{X\setminus E} d\psi \wedge d^c \psi \wedge R\Big)^{1/2} \\
    &< 2\varepsilon \|f\|_{L^\infty}.
\end{align*}
Similarly, we have
\[\Big| \int_X|f-\widetilde{f}|  du \wedge d^c \psi \wedge R  \Big|\leq 2\varepsilon \|f\|_{L^\infty}\]
The result follows.
\end{proof}

By Lemma~\ref{CSminimalsing} and Corollary~\ref{converge4}, we get the following Cauchy-Schwarz inequality.

\begin{lemma}\label{CSquasipsh}
Suppose that $\psi$ is the difference of two bounded quasi-psh function on $X$. Then \eqref{CS formula} holds.
    \end{lemma}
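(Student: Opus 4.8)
The plan is to deduce Lemma~\ref{CSquasipsh} from Lemma~\ref{CSminimalsing} by the approximation already built up in Corollary~\ref{converge4}, after replacing the currents $\theta_j + dd^c\varphi_j$ by their truncations, which have minimal singularities. First I would fix bounded Borel functions $f,g$ on $X$, a bounded function $u \in W^{1,2}_*(X)$ with $du\wedge d^c u \leq T$, and $\psi = \varphi_1 - \varphi_2$ a difference of two bounded $C\omega$-psh functions. For each $l>0$ set, as above, $R^{(l)} := (\theta_1 + dd^c(\max(\varphi_1,V_{\theta_1}-l))) \wedge \cdots \wedge (\theta_{n-1} + dd^c(\max(\varphi_{n-1},V_{\theta_{n-1}}-l)))$ and $\phi_l := l^{-1}\max(\varphi_1+\cdots+\varphi_{n-1},-l)+1$. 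Since each $\max(\varphi_j,V_{\theta_j}-l)$ is a $\theta_j$-psh function of minimal singularities, Lemma~\ref{CSminimalsing} applies to $R^{(l)}$ in place of $R$, giving, after inserting the plurifine cutoff $\phi_l$ (which is bounded by $1$ and quasi-psh, so $\phi_l |f|$ and $\phi_l^{1/2}|f|$ are still bounded Borel),
\begin{align*}
\Big| \int_X fg\, \phi_l\, du\wedge d^c \psi \wedge R^{(l)} \Big| &\leq \Big(\int_X |f|^2 \phi_l\, T\wedge R^{(l)}\Big)^{1/2} \Big(\int_X |g|^2 \phi_l\, d\psi \wedge d^c \psi \wedge R^{(l)}\Big)^{1/2}.
\end{align*}

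Next I would pass to the limit $l\to\infty$ in all three integrals. For the left-hand side, Corollary~\ref{converge4} (applied to the bounded Borel function $fg$) gives $\int_X fg\,\phi_l\, du\wedge d^c \psi \wedge R^{(l)} \to \int_X fg\, du\wedge d^c \psi \wedge R$. For the two factors on the right, I would invoke \eqref{converge2}: the sequence $\phi_l R^{(l)} \wedge S$ increases to $R\wedge S$ for every closed positive current $S$; taking $S = T$ and then using that $|f|^2$ is bounded Borel and that both $\phi_l R^{(l)}\wedge T$ and $R\wedge T$ put no mass where $f$ fails to be approximable (an ordinary monotone/Lusin argument, exactly as in the proof of Corollary~\ref{converge4}), one gets $\int_X |f|^2 \phi_l\, T\wedge R^{(l)} \to \int_X |f|^2\, T\wedge R$. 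Similarly, writing $d\psi\wedge d^c\psi = dd^c(\psi^2) - \psi\, dd^c\psi$ as a finite combination of currents of the form $vS$ with $v$ bounded quasi-psh and $S$ closed positive, and applying \eqref{converge2} to each $S$, one obtains $\int_X |g|^2 \phi_l\, d\psi\wedge d^c\psi\wedge R^{(l)} \to \int_X |g|^2\, d\psi\wedge d^c\psi\wedge R$. Combining these three convergences with the displayed inequality yields \eqref{CS formula}, which is the assertion of Lemma~\ref{CSquasipsh}.

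The main obstacle I expect is the limit of the right-hand side factors for merely bounded Borel $f,g$ rather than continuous ones: \eqref{converge2} directly controls $\int_X \phi_l\, T\wedge R^{(l)}$ against $\int_X T\wedge R$ but not automatically the weighted integrals $\int |f|^2 \phi_l\, T\wedge R^{(l)}$, because $|f|^2$ need not be upper or lower semicontinuous. The fix is the same two-measure Lusin argument used for Corollary~\ref{converge4}: given $\varepsilon>0$, choose a closed set $E$ with $\int_{X\setminus E}(T\wedge R + d\psi\wedge d^c\psi\wedge R)<\varepsilon$ and a continuous $\widetilde f = f$ on $E$ with $\|\widetilde f\|_{L^\infty}\leq \|f\|_{L^\infty}$ (and likewise $\widetilde g$); on $E$ one uses weak convergence of $\phi_l\, T\wedge R^{(l)}$ to $T\wedge R$ (which follows from \eqref{converge2} by a standard argument since the total masses converge), and off $E$ one uses the uniform bound $\int_{X\setminus E}\phi_l\, T\wedge R^{(l)} \leq \int_{X\setminus E} T\wedge R < \varepsilon$ coming again from \eqref{converge2} together with the plurifine locality of the truncations. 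Letting $l\to\infty$ and then $\varepsilon\to 0^+$ closes the estimate. One final bookkeeping point: all the measures appearing — $du\wedge d^c\psi\wedge R$, $T\wedge R$, $d\psi\wedge d^c\psi\wedge R$ — are non-pluripolar (Corollaries~\ref{boundbycap} and \ref{nonpluripolarmeasure}, and \eqref{converge2}), so that good representatives of $f,g$ and $u$ are unambiguously integrated and the limiting identity is meaningful.
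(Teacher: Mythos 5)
Your proposal is correct and follows essentially the same route as the paper, which simply cites Lemma~\ref{CSminimalsing} and Corollary~\ref{converge4} without writing out the details; you have filled in exactly what the authors left implicit, namely applying the Cauchy--Schwarz inequality of Lemma~\ref{CSminimalsing} to the minimal-singularities truncations $R^{(l)}$ (with $\phi_l^{1/2}$ absorbed into $f$ and $g$), then sending $l\to\infty$ using Corollary~\ref{converge4} for the left-hand side and \eqref{converge2} for the two right-hand factors. One small simplification: since \eqref{converge2} states that the positive measures $\phi_l R^{(l)}\wedge T$ \emph{increase} to $R\wedge T$, the convergence $\int |f|^2\phi_l\,T\wedge R^{(l)} \to \int |f|^2\,T\wedge R$ (and likewise for the $d\psi\wedge d^c\psi$ factor after the bounded-weight decomposition you describe) follows directly from monotone/dominated convergence for a fixed bounded Borel integrand, so the two-measure Lusin step you propose for the right-hand factors is not needed there — it is only needed for the signed measure $du\wedge d^c\psi\wedge R^{(l)}$, which is exactly where Corollary~\ref{converge4} invokes it.
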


We now can prove our main theorem of this section.

 \begin{proof}[Proof of Theorem~\ref{CSdsh}]
     Let $f$ be a bounded Borel function on $X$. Let $C$ be a positive constant such that $\psi = \gamma_1 - \gamma_2$ where $\gamma_1$ and $\gamma_2$ are $C\omega$-psh functions. We argue as in Lemma~\ref{converge4}. Applying Lusin's theorem for the measure $T\wedge R$, we get a closed subset $E$ of $X$ satisfies
\[\int_{X\setminus E} T\wedge R < \varepsilon \] 
such that there is a continuous function $\widetilde{f} = f$ on $E$ and $\|\widetilde{f}\|_{L^\infty} \leq \|f\|_{L^\infty}$. Let $\psi_k = \max(\gamma_1,-k)-\max(\gamma_2,-k)$. By Lemma~\ref{CSquasipsh} and Lemma~\ref{dv wedge dc v control}, we have
\begin{align*}
    \Big| \int_X|f-\widetilde{f}| \phi_k du \wedge d^c \psi_k \wedge R  \Big| &\leq 2\|f\|_{L^\infty} \Big( \int_{X\setminus E} T\wedge R\Big)^{1/2} \Big(\int_{X}  d\psi_k \wedge d^c \psi_k \wedge R\Big)^{1/2} \\
    &\leq 2\|f\|_{L^\infty} \varepsilon^{1/2} (2C\|\psi\|_{L^\infty} \|R\|_X)^{1/2}.
\end{align*}
Thus, we infer that
\[\int_X f du\wedge d^c \psi \wedge R = \lim_{k\rightarrow \infty}\int_X f \phi_k du\wedge d^c \psi_k \wedge R.\]
Applying Lemma~\ref{CSquasipsh} and letting $k\rightarrow \infty$, the proof is complete.
 \end{proof}

\section{Energy estimates}\label{energy estimate section}

In this section, we prove the energy estimates, generalizations of results in \cite[subsection 2.3]{Vu-diameter}. As in \cite{Vu-diameter}, these estimates play an important role in our geometric applications. Let $(X,\omega)$ be a compact K\"ahler manifold of dimension $n$.
Let $\theta$ be a closed $(1,1)$-form in a big cohomology class. Let $\gamma_1$ and $\gamma_2$ be $\theta$-psh functions of minimal singularities. Put $\psi = \gamma_1 - \gamma_2$. Then $\psi$ is a dsh function and $dd^c \psi = T - \eta$ where $T = \theta + dd^c \gamma_1, \eta = \theta + dd^c \gamma_2$ are closed positive $(1,1)$-currents. We have the following estimates.

\begin{proposition}\label{energy1}
    Let $M\geq 1$ and $p\geq 0$ be constants. Let $u\geq 1$ be a bounded function in $W^{1,2}_*(X)$. Suppose that $du\wedge d^c u \leq T = \eta + dd^c \psi$ and $0\leq \psi \leq M$. For every $0\leq m < n$, there holds
    \[\int_X u^p T^m \wedge \eta^{n-m} \leq 2M(p+1)^2 \int_X u^p T^{m+1} \wedge \eta^{n-m-1}.\]
    Consequently,
    \[\int_X u^p T^m \wedge \eta^{n-m} \leq 2^{n-m}(p+1)^{2(n-m)}M^{n-m} \int_X u^p T^n.\]
\end{proposition}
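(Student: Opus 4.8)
The plan is to establish the first inequality by an integration-by-parts computation applied to the difference $dd^c\psi = T-\eta$, and then obtain the second inequality by iterating the first one $n-m$ times, decreasing the power of $\eta$ (equivalently, increasing the power of $T$) one step at a time until one reaches $\int_X u^p T^n$. The auxiliary factor $2M(p+1)^2$ gets multiplied at each of the $n-m$ steps, which accounts precisely for the constant $2^{n-m}(p+1)^{2(n-m)}M^{n-m}$ in the consequence.

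For the main step, I would write, for fixed $0\le m<n$,
\[
\int_X u^p\, T^m\wedge\eta^{n-m} - \int_X u^p\, T^{m+1}\wedge\eta^{n-m-1} = \int_X u^p\, T^m\wedge(\eta-T)\wedge\eta^{n-m-1} = -\int_X u^p\, dd^c\psi\wedge T^m\wedge\eta^{n-m-1},
\]
where all products are non-pluripolar products and $R:=T^m\wedge\eta^{n-m-1}$ is a closed positive $(n-1,n-1)$-current of the type covered by the previous sections (a product of $n-1$ closed positive $(1,1)$-currents coming from forms in big cohomology classes). Applying the integration by parts formula (Theorem~\ref{integrationbypartdsh}) with $\rho$ taken to be a smooth approximation of $t\mapsto t^p$ on the relevant range (since $u\ge 1$ is bounded, $u^p$ is bounded and one can apply the formula with $\rho(v)=v^p$ on a neighborhood of $[1,\|u\|_{L^\infty}]$, or first apply it to $\min(u,N)$ and let $N\to\infty$ — here $u$ itself is bounded so this is not even needed), one gets
\[
-\int_X u^p\, dd^c\psi\wedge R = p\int_X u^{p-1}\, du\wedge d^c\psi\wedge R.
\]
Then the Cauchy--Schwarz inequality in the singular setting (Theorem~\ref{CSdsh}), applied with $f = u^{(p-1)/2}\cdot u^{?}$ chosen so that $f^2 = u^{p-1}\cdot u$ — more precisely with $f=u^{p/2}$ absorbing the extra factor and $g=u^{(p-2)/2}$, or simply $f=u^{(p-1)/2}u^{1/2}$, $g=u^{(p-1)/2}$ so that $fg = u^{p-1}u^{1/2}$... let me instead take it cleanly — yields
\[
\Big|p\int_X u^{p-1}\, du\wedge d^c\psi\wedge R\Big| \le p\Big(\int_X u^{p}\, T\wedge R\Big)^{1/2}\Big(\int_X u^{p-2}\, d\psi\wedge d^c\psi\wedge R\Big)^{1/2},
\]
using $du\wedge d^c u\le T$ to bound the first factor. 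Since $0\le\psi\le M$, one estimates $\int_X u^{p-2}\, d\psi\wedge d^c\psi\wedge R$ by Stokes/integration by parts: $d\psi\wedge d^c\psi = dd^c(\psi^2/2) - \psi\, dd^c\psi$, and combined with $u\ge 1$ one bounds this by (a constant times) $M\int_X u^p\, dd^c\psi\wedge(\cdots)$ — more carefully one shows $\int_X u^{p-2}d\psi\wedge d^c\psi\wedge R \le 2M\int_X u^p\,(T\text{ or }\eta)\wedge R$ after another integration by parts on the $\psi$ factor, absorbing powers of $u$ using $u\ge 1$ so that $u^{p-2}\le u^p$ and handling the $du$ term that appears by Cauchy--Schwarz again and absorbing it. Collecting everything and using $p\cdot(\text{AM-GM})$ to split the product, one arrives at $\int_X u^p T^m\wedge\eta^{n-m} \le \int_X u^p T^{m+1}\wedge\eta^{n-m-1} + (p+1)^2\cdot 2M\cdot(\text{something})$, which rearranges to the claimed bound; the factor $(p+1)^2$ rather than $p^2$ comes from the Young's inequality step and covers the edge case $p=0$.

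The consequence follows by a trivial induction: write
\[
\int_X u^p T^m\wedge\eta^{n-m} \le 2M(p+1)^2\int_X u^p T^{m+1}\wedge\eta^{n-m-1} \le \cdots \le \big(2M(p+1)^2\big)^{n-m}\int_X u^p T^n,
\]
applying the first inequality successively at levels $m, m+1,\dots,n-1$, which is valid since each intermediate current $T^{m'}\wedge\eta^{n-m'}$ with $m'<n$ is again of the form to which the first inequality applies.

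\textbf{Main obstacle.} The delicate point is \emph{not} the induction but making the single-step argument rigorous in the non-pluripolar setting: one must ensure all wedge products appearing (including $u^{p-2}d\psi\wedge d^c\psi\wedge R$ and the mixed term $du\wedge d^c\psi\wedge R$) are well-defined signed measures putting no mass on pluripolar sets, which is exactly what Theorem~\ref{integrationbypartdsh}, Corollary~\ref{nonpluripolarmeasure}, and Theorem~\ref{CSdsh} are designed to supply; and one must be careful that $\varphi_1,\varphi_2$ of minimal singularities makes $\psi$ a \emph{bounded} dsh function (true only after normalizing, since minimal-singularity potentials differ from $V_\theta$ by a function that is bounded on the ample locus but a priori only locally bounded there — in fact here $\psi=\varphi_1-\varphi_2$ is globally bounded precisely because both have minimal singularities, so their difference is a globally bounded dsh function, which is what is needed to invoke the results of the previous sections). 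Getting the bookkeeping of the powers of $u$ right through the two nested Cauchy--Schwarz applications — so that one genuinely closes the loop and can absorb the $du$-term rather than generating an ever-growing tower of integrals — is the part that requires care, and is handled by the standard trick of using $u\ge 1$ to dominate lower powers by $u^p$ and then applying Young's inequality with a small parameter to move the offending term to the left-hand side.
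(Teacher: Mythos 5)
Your proposal follows the same strategy as the paper's proof: write $\int_X u^p\,\eta\wedge R - \int_X u^p\,T\wedge R = -\int_X u^p\,dd^c\psi\wedge R$, integrate by parts via Theorem~\ref{integrationbypartdsh} to obtain $p\int_X u^{p-1}\,du\wedge d^c\psi\wedge R$, apply the singular Cauchy--Schwarz of Theorem~\ref{CSdsh}, and then control the resulting energy term $\int_X u^{p-2}\,d\psi\wedge d^c\psi\wedge R$ by a second integration by parts exploiting $0\le\psi\le M$ together with $u\ge 1$ and Young's inequality. The paper packages that second step as a computation of $J:=\int_X u^p\,d\psi\wedge d^c\psi\wedge R$ carried out through the careful limit along $\phi_k\,d\psi_k\wedge d^c\psi\wedge R$ (Lemma~\ref{key2} and the $\psi_k,\phi_k$ approximations), and then collects constants as in \cite[Propositions 2.15 and 2.16]{Vu-diameter}. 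One imprecision worth flagging: the clean one-step bound you assert, $\int u^{p-2}\,d\psi\wedge d^c\psi\wedge R \le 2M\int u^p(T\text{ or }\eta)\wedge R$, is not what the integration by parts actually gives --- it produces a mixture of $\int u^p T\wedge R$, $\int u^p\eta\wedge R$, and a further Cauchy--Schwarz term involving the same energy quantity --- so the Young/absorption step you mention at the end is genuinely where the inequality closes (and the \emph{order} in which one substitutes the two self-consistent inequalities matters for landing on $2M(p+1)^2$ rather than a strictly worse constant), not merely bookkeeping.
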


\begin{proposition}\label{energy2}
    Let $M\geq 1$ and $p\geq 2n$ be constants. Let $u\geq 0$ be a bounded function in $W^{1,2}_*(X)$. Suppose that $du\wedge d^c u \leq T = \eta + dd^c \psi$ and $0\leq \psi \leq M$. For every $0\leq m < n$, there holds
    \[\int_X u^p T^m \wedge \eta^{n-m} \leq 2M(p+1)^2  \Big( \int_X u^p T^{m+1} \wedge \eta^{n-m-1} + \int_X u^{p-2} T^{m+1} \wedge \eta^{n-m-1} \Big).\]
    Consequently,
    \[\int_X u^p T^m \wedge \eta^{n-m} \leq 2^{n-m}(p+1)^{2(n-m)}M^{n-m} \int_X (u^p+u^{p-2n}) T^n.\]
\end{proposition}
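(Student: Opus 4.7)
The plan is to mirror the proof of Proposition~\ref{energy1}, with extra bookkeeping since $u$ is merely non-negative rather than $\geq 1$. Write $I_j:=\int_X u^p\, T^j\wedge\eta^{n-j}$, $A:=\int_X u^{p-2}\,T^{m+1}\wedge\eta^{n-m-1}$, and $R:=T^m\wedge\eta^{n-m-1}$. Using $T-\eta=dd^c\psi$, I would first apply Theorem~\ref{integrationbypartdsh} with $\rho(v)=v^{p-1}$ to get the key identity
\begin{equation*}
I_m-I_{m+1} \;=\; -\int_X u^p\,dd^c\psi\wedge R \;=\; p\int_X u^{p-1}\,du\wedge d^c\psi\wedge R,
\end{equation*}
and then apply Theorem~\ref{CSdsh} with $f=u^{(p-2)/2}$ and $g=u^{p/2}$ (both non-negative and bounded since $u\geq 0$ and $p\geq 2n\geq 2$) to obtain $|I_m-I_{m+1}|\leq p\sqrt{A}\sqrt{J}$, where $J:=\int_X u^p\,d\psi\wedge d^c\psi\wedge R$. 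Non-integer exponents are handled throughout by smooth approximation of $v\mapsto v^p$ on $[0,\infty)$.

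The crux is controlling $J$ without spawning a cascade of new auxiliary energies at exponents $p-4,p-6,\ldots$. I plan to apply Theorem~\ref{integrationbypartdsh} a second time, letting $\psi$ (bounded dsh, hence in $W^{1,2}_*(X)$) play the role of the complex-Sobolev function and $\rho(v)=v^p$; this yields
\begin{equation*}
J \;=\; -p\int_X \psi\,u^{p-1}\,du\wedge d^c\psi\wedge R \;-\; \int_X \psi\,u^p\,dd^c\psi\wedge R.
\end{equation*}
The second term is controlled by $M(I_m+I_{m+1})$ via $0\leq\psi\leq M$ and $|dd^c\psi|\leq T+\eta$. For the first, a Cauchy-Schwarz step with $f=\sqrt{\psi}\,u^{(p-2)/2}$, $g=\sqrt{\psi}\,u^{p/2}$ (so $fg=\psi u^{p-1}$) produces a bound of the form $pM\sqrt{AJ}$, leading to the self-referential quadratic $J\leq pM\sqrt{AJ}+M(I_m+I_{m+1})$ in $\sqrt{J}$; solving it yields $J\leq C_1 p^2 M^2 A + C_2 M(I_m+I_{m+1})$ for absolute constants $C_1,C_2$.

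Substituting back produces a master quadratic $(I_m-I_{m+1})^2\leq p^2 A\bigl(C_1p^2M^2 A + C_2 M(I_m+I_{m+1})\bigr)$. Assuming $I_m>I_{m+1}$ (otherwise the claim is trivial), I would solve for $D:=I_m-I_{m+1}$ using $I_m+I_{m+1}=2I_{m+1}+D$, and invoke AM-GM to decouple the $A$- and $I_{m+1}$-contributions; tracking the constants gives the single-step inequality $I_m\leq 2M(p+1)^2(I_{m+1}+A)$. For the consequence, I iterate this single-step $n-m$ times; the hypothesis $p\geq 2n$ guarantees every intermediate exponent $p-2k$, $k\leq n$, remains non-negative, and a straightforward induction gives
\begin{equation*}
I_m \leq (2M(p+1)^2)^{n-m}\sum_{k=0}^{n-m}\binom{n-m}{k}\int_X u^{p-2k}\,T^n.
\end{equation*}
The elementary estimate $u^{p-2k}\leq u^p+u^{p-2n}$ (checked separately for $u\leq 1$ and $u\geq 1$) together with $\sum_k\binom{n-m}{k}=2^{n-m}$ then absorbs everything into the stated constant.

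The hardest step is the control of $J$: the specific Cauchy-Schwarz pairing $u^{(p-2)/2}\cdot u^{p/2}$ (and with $\sqrt{\psi}$-factors inserted) is what allows the argument to close at a single level of exponent, because the weight $u^p$ sits on $d\psi\wedge d^c\psi$ exactly where the IBP trick can reuse the identity from Step~1. A secondary technical point is that both applications of Theorem~\ref{integrationbypartdsh} must accommodate a bounded dsh function in the role of the Sobolev factor and $\rho(v)=v^p$ (resp.\ $v^{p-1}$) for arbitrary $p\geq 2n$; these rely on the fact that bounded dsh functions lie in $W^{1,2}_*(X)$ and on smooth approximation of real powers on $[0,\infty)$, both available from the preceding sections.
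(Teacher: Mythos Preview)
Your overall strategy is the same as the paper's: derive $I_m-I_{m+1}=p\int_X u^{p-1}\,du\wedge d^c\psi\wedge R$ via Theorem~\ref{integrationbypartdsh}, bound it by $p\sqrt{A}\sqrt{J}$ via Theorem~\ref{CSdsh}, establish the identity
\[
J=-p\int_X \psi\,u^{p-1}\,du\wedge d^c\psi\wedge R-\int_X \psi\,u^p\,dd^c\psi\wedge R,
\]
and close up by another Cauchy--Schwarz plus elementary quadratic manipulations. This is exactly the route of \cite[Propositions~2.15--2.16]{Vu-diameter}, which the paper invokes after setting up the identity for $J$.

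There is, however, a genuine gap in how you justify that identity. You assert ``bounded dsh, hence in $W^{1,2}_*(X)$'' and then let $\psi$ play the Sobolev role in Theorem~\ref{integrationbypartdsh}. This implication is \emph{not} established anywhere in the paper and is precisely the technical obstruction the big-class setting creates: here $\psi=\varphi_1-\varphi_2$ with $\varphi_1,\varphi_2$ $\theta$-psh of \emph{minimal} (typically unbounded) singularities, so one cannot simply write $\psi$ as a difference of bounded quasi-psh functions, and there is no evident closed positive current dominating $d\psi\wedge d^c\psi$. The paper circumvents this by working with the truncations $\psi_k=\max(\varphi_1,-k)-\max(\varphi_2,-k)$, which \emph{are} differences of bounded quasi-psh functions and hence lie in $W^{1,2}_*(X)$; it applies Theorem~\ref{integrationbypartdsh} with $\psi_k$ in the Sobolev slot, and then passes to the limit using Lemma~\ref{key2} and the estimates from Lemmas~\ref{convergencenonpluri}--\ref{converge3}. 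Your argument becomes correct once you make this substitution; as written, the second integration by parts is not justified.

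A minor point: your iteration for the ``Consequently'' yields $(2M(p+1)^2)^{n-m}\cdot 2^{n-m}\int_X(u^p+u^{p-2n})T^n$, which carries an extra factor $2^{n-m}$ compared to the stated bound. This is harmless for applications but worth noting.
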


To prove these estimates, alongside Theorem~\ref{integrationbypartdsh} and Theorem~\ref{CSdsh}, we also need the following convergence lemma.

\begin{lemma}\label{key2}
    Let $v$ be a bounded Borel function. Let $R$ be a non-pluripolar product of $n-1$ closed positive $(1,1)$-currents. Let $\psi_k = \max(\gamma_1,-k) - \max(\gamma_2,-k)$ and $\phi_k = k^{-1} \max (\gamma_1+\gamma_2,-k)+1 $. Then 
    \[\int_X v d\psi \wedge d^c \psi \wedge R = \lim_{k\rightarrow \infty} \int_X v \phi_k d\psi_k \wedge d^c \psi \wedge R.\]
\end{lemma}

\begin{proof}
    By plurifine property, $\phi_k d\psi_k \wedge d^c \psi \wedge R = \phi_k d\psi_k \wedge d^c \psi_k \wedge R$. Moreover, since $\phi_k d\psi_k \wedge d^c \psi_k \wedge R$ increases to $ d\psi \wedge d^c \psi \wedge R $ as measures, the lemma follows.
\end{proof}

\begin{proof}[Proof of Proposition~\ref{energy1} and Proposition~\ref{energy2}]
    Let $R := T^k \wedge \eta^{n-k-1}$. We need to show
    \[\int_X u^p \eta \wedge R \leq 2M(p+1)^2 \int_X u^p T\wedge R.\]
Define
    \[J := \int_X u^p d\psi\wedge d^c \psi \wedge R.\]
    By Lemma~\ref{key2}, we have
    \[J = \lim_{k\rightarrow \infty} J_k \text{ where } J_k:= \int_X u^p \phi_k d\psi_k\wedge d^c\psi \wedge R\]
    Since $\psi_k$ is a bounded function in $W^{1,2}_*(X)$, by Theorem~\ref{integrationbypartdsh}, we get 
    \[J_k = -\int_X \psi_k u^p d\phi_k \wedge d^c \psi \wedge R - p\int_X  u^{p-1}\psi_k \phi_k du\wedge d^c\psi \wedge R - \int_X u^p \psi_k \phi_k dd^c \psi \wedge R.\]
    Letting $k\rightarrow \infty$. For the first term, by Lemma~\ref{key}, Lemma~\ref{dv wedge dc v control}, and Cauchy-Schwarz inequality, we have
    \begin{align*}\Big|\int_X \psi_k u^p d\phi_k \wedge d^c \psi \wedge R \Big| &= \lim_{l\to \infty}\Big| \int_X \psi_k u^p \phi_l d\phi_k \wedge d^c \psi_l \wedge R\Big| \\ &\leq \lim_{l\to \infty } M\|u\|^p_{L^\infty} \Big(\int_X d\phi_k \wedge d^c \phi_k \wedge R\Big)^{1/2} \Big(\int_X d\psi_l \wedge d^c \psi_l \wedge R\Big)^{1/2}  \\
    &\lesssim \Big(\int_X d\phi_k \wedge d^c \phi_k \wedge R\Big)^{1/2} .\end{align*}
    We now argue similarly to the estimate of term $I_{1,k}$ in Lemma~\ref{convergencenonpluri} to see that this term goes to $0$. The remaining term goes to
    \[-p\int_X  u^{p-1}\psi  du\wedge d^c\psi \wedge R - \int_X u^p \psi dd^c \psi \wedge R\]
    by Lebesgue's dominated theorem. Hence, we infer that
    \[J = -p\int_X  u^{p-1}\psi  du\wedge d^c\psi \wedge R - \int_X u^p \psi dd^c \psi \wedge R.\]
    We now use Theorem~\ref{CSdsh} and argue similarly to \cite[Proposition 2.15 and Proposition 2.16]{Vu-diameter} to complete the proof.
\end{proof}

\section{Sobolev inequality for complex Sobolev functions}\label{section Sobolev ineq for complex Sobolev}

In this section, we prove a \textit{singular} Sobolev inequality for a special class of complex Sobolev functions. In the proof of our main results, we will follow the strategy in \cite[Theorem 1.6]{Vu-diameter}. The main ingredients are energy estimates (which has been settled in Section~\ref{energy estimate section}) and Sobolev inequality associated to some auxiliary current. This current is the solution of a complex Monge-Amp\`ere equation in big cohomology class.  Uniform Sobolev inequalities for K\"ahler metrics in big cohomology classes has been proved in \cite{Nguyen_Vu_diamterforbigclass}. These inequalities required the current to be smooth outside an analytic set. However, it remains unknown whether such solution is smooth outside an analytic set. This is the reason why we need to remove this condition.

We first recall some notions in \cite{Nguyen_Vu_diamterforbigclass}. Let $(X,\omega)$ be a compact K\"ahler manifold of dimension $n$. Let $p>1,A\geq 1,B\geq 1$ be constants. We denote by $\mathcal{W}_{\bigclass}(X,\omega,p,A,B)$ the set of closed positive $(1,1)$-currents $T$ that satisfy the following conditions:
\begin{itemize}
        \item[(i)] $T$ is a smooth K\"ahler form on some open Zariski subset $U_T$ of $X$;
        \item[(ii)] $T$ belongs to a big cohomology class $\alpha$ and has minimal singularities;
        \item[(iii)]  There exists a closed $(1,1)$-form $\theta$ in $\alpha$ such that $\theta \leq A\omega$;
        \item[(iv)] $V_T^{-1} T^n = f \omega^n$ for some integrable function $f$ such that $\|f\|_{L^p(\omega^n)} \leq B$.
    \end{itemize}
We have the following Sobolev inequality for $T\in\mathcal{W}_{\bigclass}(X,\omega,p,A,B) $.
\begin{theorem}\label{uniformestimateLp} \cite[Theorem 1.2.(ii)]{Nguyen_Vu_diamterforbigclass}
Let $q\in (1,n/(n-1))$. Then for every $T\in \mathcal{W}_{\bigclass}(X,\omega,p,A,B)$ there exists a constant $C= C(\omega,n,p,A,B,q)$ such that
    \[\Big(\frac{1}{V_T}\int_{U_T} |u|^{2q}  T^n  \Big)^{1/q} \leq C \Big(\frac{1}{V_T} \int_{U_T}  du\wedge d^c u\wedge T^{n-1}  + \frac{1}{V_T} \int_{U_T} |u|^2  T^n  \Big)\]
    for every $u\in W_T^{1,2}(X)$.
\end{theorem}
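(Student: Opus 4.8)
\textbf{Proof strategy for Theorem~\ref{uniformestimateLp} in the non-smooth setting.} The aim is to remove the hypothesis that $T$ is smooth on a Zariski open set, replacing it only by the requirement that $T$ has minimal singularities with $V_T^{-1}T^n = f\omega^n$ for $f\in L^p(\omega^n)$, $p>1$, and that $T$ is dominated by $A\omega$ in cohomology. The natural plan is a regularization-and-passage-to-the-limit argument. First I would recall from \cite{Nguyen_Vu_diamterforbigclass} exactly which quantities enter the constant $C$ in the known Sobolev inequality; the point is that $C$ depends only on $\omega,n,p,A,B,q$ and \emph{not} on the structure of the singular locus. Then, given $T = \theta + dd^c\varphi_T$ with $\varphi_T$ of minimal singularities, I would approximate $\varphi_T$ from above by $\varphi_j := \max(\varphi_T, V_\theta - j)$, so that $T_j := \theta + dd^c\varphi_j$ decreases to $T$ and each $T_j$ has bounded potential. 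However $T_j$ need not satisfy an $L^p$ bound on its Monge-Amp\`ere; so the better route is to instead solve auxiliary Monge-Amp\`ere equations: for each $j$, produce $S_j \in \mathcal{W}_{\bigclass}(X,\omega,p,A,B')$ (smooth on a Zariski open set because it solves a Monge-Amp\`ere equation with suitably regularized smooth density $f_j \to f$ in $L^p$, via the existence theory of \cite{BEGZ}) with $S_j \to T$ in an appropriate sense — e.g. $\varphi_{S_j} \to \varphi_T$ in capacity and $S_j^n \to T^n$ weakly. Apply Theorem~\ref{uniformestimateLp} (the known version) to each $S_j$ with a uniform constant, then pass to the limit.

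The passage to the limit is where the analysis of the preceding sections is essential. For a fixed $u \in W^{1,2}_T(X)$ one must show: (a) $V_{S_j}^{-1}\int |u|^{2q} S_j^n \to V_T^{-1}\int_{U_T}|u|^{2q}T^n$; (b) $V_{S_j}^{-1}\int du\wedge d^c u \wedge S_j^{n-1} \to V_T^{-1}\int_{U_T} du\wedge d^c u\wedge T^{n-1}$; (c) the analogous convergence for the $|u|^2$ term. Since $u$ is only a complex Sobolev function (not smooth), the wedge products $du\wedge d^c u\wedge S_j^{n-1}$ must be interpreted as in Sections~\ref{intbypart}--\ref{energy estimate section}; here I would use Theorem~\ref{regularize} to replace $u$ by smooth $u_k$ with $du_k\wedge d^c u_k \le T_k$, $\|T_k\|_X \le C\|T\|_X$, together with Corollary~\ref{convergence} and the non-pluripolar-mass estimates (Corollary~\ref{boundbycap}, Corollary~\ref{nonpluripolarmeasure}) to control the difference of energies against the capacity of exceptional sets. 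The monotone convergence of non-pluripolar products (the increasing property recalled after \eqref{define R}, and \eqref{converge2}) handles the convergence $S_j^n \to T^n$ and $S_j^{n-1}\wedge(\cdot)\to T^{n-1}\wedge(\cdot)$ on the set where all potentials are finite, which is a full-measure set for every non-pluripolar measure in sight.

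An alternative, possibly cleaner, route avoids solving Monge-Amp\`ere equations: approximate $T$ directly by the currents $S_j$ built from the quasi-psh regularizations of $\varphi_T$ intersected with a cutoff that keeps the density controlled — but since minimal singularities already give $T^n = \mathbf 1_{U_T}T^n$ with no mass on the pluripolar complement, one can simply work with the truncations $\varphi_j = \max(\varphi_T,V_\theta - j)$ and use the Bedford-Taylor-type convergence $\mathbf 1_{\{\varphi_T > V_\theta - j\}}T_j^n \nearrow T^n$, checking that the left-hand Sobolev quotient for the truncated objects can be bounded using the known theorem applied to a genuinely smooth Monge-Amp\`ere solution with the same cohomology class and a density dominating $f_j$. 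Either way, the structure is: uniform Sobolev for smooth-on-Zariski-open representatives $\Rightarrow$ stability of both sides under the relevant weak/capacity convergence $\Rightarrow$ the inequality for the limit $T$.

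\textbf{Main obstacle.} The hard part is the convergence of the Dirichlet energy term \eqref{define R}-type integral $\int du\wedge d^c u\wedge S_j^{n-1} \to \int du\wedge d^c u\wedge T^{n-1}$ when $u$ is merely in $W^{1,2}_T(X)$ and the currents $S_j^{n-1}$ degenerate along growing pluripolar sets: one needs simultaneous control of the regularization of $u$ (Theorem~\ref{regularize}) and of the currents, with all error terms estimated uniformly against capacities that shrink — this is exactly the kind of multi-regularization juggling carried out in Sections~\ref{intbypart} and \ref{energy estimate section}, and the key technical input is that every measure appearing (products $T\wedge R$, $d\psi\wedge d^c\psi\wedge R$, etc.) is non-pluripolar, so Corollary~\ref{convergence} applies. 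The lower semicontinuity of $\int du\wedge d^c u\wedge T^{n-1}$ under decreasing $S_j$ gives one inequality for free; the reverse (an upper bound, needed to transfer the \emph{right-hand side} correctly) requires the uniform energy estimates of Section~\ref{energy estimate section} to rule out loss of mass. I expect this to be the crux, with the left-hand side convergence $\int|u|^{2q}S_j^n \to \int|u|^{2q}T^n$ being comparatively routine once $f_j\to f$ in $L^p$ and one invokes Hölder with the conjugate exponent together with the fact that $|u|^{2q} \in L^{p'}$ by the known Sobolev inequality itself.
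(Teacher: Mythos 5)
You have misread the statement. Theorem~\ref{uniformestimateLp} quantifies over $T \in \mathcal{W}_{\bigclass}(X,\omega,p,A,B)$, and condition~(i) in the definition of that class explicitly requires $T$ to be a smooth K\"ahler form on a Zariski open subset $U_T$. The paper does not prove this theorem at all: the sentence immediately preceding it reads ``The following Sobolev inequality was proved in \cite{Nguyen_Vu_diamterforbigclass}'', so it is imported as a black box. What you have actually sketched is a strategy for \emph{removing} condition~(i), which is the subject matter of Theorem~\ref{sobolevW* 2} (the new result of this section, stated for the larger class $\mathcal{W}^*_{\bigclass}$); you should have been comparing your outline against the proof of Theorem~\ref{sobolevW* 2}, not against a citation.

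Taking your proposal as an attempt at Theorem~\ref{sobolevW* 2}, the high-level idea — regularize the current, apply the known Sobolev inequality with a uniform constant, pass to the limit — is the right one, and it does match the paper's skeleton. But the concrete route is different and has a gap. The paper does not solve auxiliary Monge--Amp\`ere equations from scratch; it invokes Theorem~\ref{approximate T by good ones}, which is already established in \cite[Section 7]{Nguyen_Vu_diamterforbigclass} and packages precisely the needed ingredients: a sequence $T_k$ of currents that are smooth off an analytic set, whose potentials converge to that of $T$ in capacity, and for which the Sobolev constant is uniform. Crucially, the paper also adds an extra \emph{hypothesis on $u$} that your sketch never mentions: in Theorem~\ref{sobolevW* 2} one takes $u \in W^{1,2}_*(X)$ bounded with $du\wedge d^c u \le S$ for a closed positive current $S$, \emph{and} $u$ continuous outside a closed pluripolar set. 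This continuity is what makes the cutoff argument work: one multiplies $u^{r/2}$ by smooth compactly supported $\chi_l$ on the continuity set $\Omega$, applies the Sobolev inequality for $T_k$ to the continuous function $\chi_l u^{r/2}$, expands $d(\chi_l u^{r/2})\wedge d^c(\chi_l u^{r/2})$ into four terms, controls the cross-terms by the Cauchy--Schwarz inequality of Theorem~\ref{CSdsh} together with $\int d\chi_l\wedge d^c\chi_l\wedge T^{n-1}\to 0$, and finally invokes the weak convergences $T_k^n\to T^n$ and $S\wedge T_k^{n-1}\to S\wedge T^{n-1}$ from \cite[Theorem 5.4]{Nguyen_Vu_diamterforbigclass} before sending $l\to\infty$.

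The genuine gap in your proposal is exactly where you locate the ``main obstacle'': giving meaning to, and passing to the limit in, the Dirichlet energy $\int du\wedge d^c u\wedge S_j^{n-1}$. You take $u \in W^{1,2}_T(X)$, which only controls $u$ and $\nabla u$ in $L^2(T^n)$ on $U_T$; for a current $S_j$ that does not coincide with $T$ off a pluripolar set, the wedge product $du\wedge d^c u\wedge S_j^{n-1}$ is not a well-defined global object. You gesture at ``lower semicontinuity giving one inequality for free'', but no such semicontinuity holds for these mixed non-pluripolar products without additional structure. The paper resolves precisely this by (a) restricting to $u\in W^{1,2}_*(X)$ with $du\wedge d^c u\le S$, so that $du\wedge d^c u\wedge R$ is always a non-pluripolar measure (Corollaries~\ref{boundbycap} and~\ref{nonpluripolarmeasure}), and (b) only ever applying the smooth Sobolev inequality to genuinely continuous test functions $\chi_l u^{r/2}$, thereby never needing to interpret the Dirichlet energy of a raw $W^{1,2}_T$ function against a degenerating current. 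Without one of these two devices your passage to the limit does not go through.
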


We recall that $T$ has minimal singularities if we can write $T = \theta + dd^c \varphi$ for some $\theta$-psh function $\varphi$ such that $V_\theta - O(1) \le \varphi \le V_\theta +O(1)$. Recall also that $W^{1,2}_T(X)$ is the space of measurable functions $u$ such that both $u$ and $\nabla u$ are $L^2$ integrable with respect to smooth positive measure $T^n$ on $U_T$.

\begin{lemma}\label{sobolevW*}
    Let $u\geq 0$ be a bounded function in $W^{1,2}_*(X)$ and $du\wedge d^c u \leq S$ for some closed positive $(1,1)$-current $S$. Let $q\in (1,n/(n-1))$ and $r\geq 2$ be constants. Then for every $T\in \mathcal{W}_{\bigclass}(X,\omega,p,A,B)$ there exists a constant $C= C(\omega,n,p,A,B,q)$ such that
    \[\Big(\frac{1}{V_T} \int_X u^{rq} T^n \Big)^{1/q} \leq Cr^2 \Big(\frac{1}{V_T} \int_X u^{r-2} S\wedge T^{n-1} + \frac{1}{V_T}\int_X u^r T^n\Big).\]
\end{lemma}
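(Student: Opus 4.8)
**Proof plan for Lemma~\ref{sobolevW*}.**

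The plan is to deduce this from Theorem~\ref{uniformestimateLp} by substituting a suitable power of $u$ as the test function, and then controlling the resulting Dirichlet energy via the hypothesis $du \wedge d^c u \le S$. Concretely, I would set $w := u^{r/2}$, which is a non-negative bounded function; one checks that $w$ lies in $W^{1,2}_T(X)$ because $u$ is bounded and in $W^{1,2}_*(X)$ (so in particular $u \in W^{1,2}_T(X)$ since $T^n$ is a smooth positive measure on $U_T$ with finite total mass, and $r/2 \ge 1$). Applying Theorem~\ref{uniformestimateLp} to $w$ gives
\[
\Big(\frac{1}{V_T}\int_{U_T} u^{rq}\, T^n\Big)^{1/q} \le C\Big(\frac{1}{V_T}\int_{U_T} dw \wedge d^c w \wedge T^{n-1} + \frac{1}{V_T}\int_{U_T} u^r\, T^n\Big).
\]
It remains to estimate the energy term. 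By the chain rule on $U_T$ (where everything is smooth enough, or by a standard regularization/truncation of $u$), $dw \wedge d^c w = \frac{r^2}{4} u^{r-2}\, du \wedge d^c u$ on $\{u > 0\}$, and since $du \wedge d^c u \le S$ as currents we obtain $dw \wedge d^c w \le \frac{r^2}{4} u^{r-2} S$ (using the plurifine locality to restrict to $\{u>0\}$, and noting the contribution of $\{u = 0\}$ vanishes). Hence
\[
\frac{1}{V_T}\int_{U_T} dw \wedge d^c w \wedge T^{n-1} \le \frac{r^2}{4} \cdot \frac{1}{V_T}\int_{U_T} u^{r-2} S \wedge T^{n-1}.
\]
Combining and absorbing the constant $1/4$ into $C$, and using $r \ge 2$ so that $r^2 \ge 1$ dominates the lower-order coefficient, yields the claimed inequality with the $Cr^2$ prefactor. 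Finally I would replace the integrals over $U_T$ by integrals over $X$: since $X \setminus U_T$ is (contained in) a pluripolar set and all the measures appearing — $T^n$ on one side, and $S \wedge T^{n-1}$, $u^{r-2} S \wedge T^{n-1}$, $u^r T^n$ understood as non-pluripolar products — put no mass there, the two formulations agree. (Here one uses that $S \wedge T^{n-1}$, built as a non-pluripolar product, is a non-pluripolar measure.)

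The main obstacle is making the chain-rule step $dw \wedge d^c w = \tfrac{r^2}{4} u^{r-2} du \wedge d^c u$ rigorous, since $u$ is only a bounded complex Sobolev function and not smooth. I would handle this by a two-step approximation: first truncate, replacing $u$ by $u_\varepsilon := (u^2 + \varepsilon^2)^{1/2} - \varepsilon$ or simply work with $g_\varepsilon(u)$ for $g_\varepsilon(t) = (t+\varepsilon)^{r/2}$, so that the composition is a genuinely smooth convex-type function of $u$ and $dv \wedge d^c v$ for $v = g_\varepsilon(u)$ is controlled by $g_\varepsilon'(u)^2\, du \wedge d^c u \le g_\varepsilon'(u)^2 S$; then let $\varepsilon \to 0$, using monotone convergence on the right-hand side and the already-established non-pluripolar integration-by-parts machinery (Theorem~\ref{integrationbypartdsh}) to pass to the limit on the left. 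Throughout, the fact that $u$ is bounded keeps all the quantities $g_\varepsilon'(u)$, $u^{r-2}$ (for $r \ge 2$) bounded, which is exactly why the statement restricts to bounded $u$ and $r \ge 2$; the case $r = 2$ is trivial since then $w = u$ and no chain rule is needed.
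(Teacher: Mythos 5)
Your proposal follows exactly the paper's proof: apply Theorem~\ref{uniformestimateLp} to $w=u^{r/2}$, use $d w\wedge d^c w = (r/2)^2 u^{r-2}\,du\wedge d^c u \le \tfrac{r^2}{4} u^{r-2}S$, absorb the constant into $C$, and pass from $U_T$ to $X$ because $X\setminus U_T$ is pluripolar and the relevant measures are non-pluripolar. The only differences are cosmetic: you track the $1/4$ and spell out the approximation needed to justify the chain rule, both of which the paper leaves implicit.
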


\begin{proof}
We have $d(u^{r/2})\wedge d^c (u^{r/2}) = r^2 u^{r-2} du\wedge d^c u \leq r^2 u^{r-2} S$. Since $r\geq 2$, we get $u^{r/2} \in W_T(X)$. Applying Theorem~\ref{uniformestimateLp} for $u^{r/2}$, we infer that
    \begin{align*}\Big(\frac{1}{V_T} \int_{U_T} u^{rq} T^n \Big)^{1/q} &\leq C \Big(\frac{1}{V_T} \int_{U_T}  d (u^{r/2}) \wedge d^c (u^{r/2}) \wedge T^{n-1} + \frac{1}{V_T}\int_{U_T} u^{r} T^n\Big) \\
    &\leq Cr^2 \Big(\frac{1}{V_T} \int_{U_T} u^{r-2} du\wedge d^c u\wedge T^{n-1} + \frac{1}{V_T}\int_{U_T} u^r T^n\Big) \\
    &\leq Cr^2 \Big(\frac{1}{V_T} \int_{U_T} u^{r-2} S\wedge T^{n-1} + \frac{1}{V_T}\int_{U_T} u^r T^n\Big).
    \end{align*}
    Since $X\setminus U_T$ is a pluripolar set, the result follows.
\end{proof}

Denote by $\mathcal{W}^*_{\bigclass}(X,\omega,p,A,B)$ the set of closed positive $(1,1)$-currents $T$ that satisfy only conditions (ii), (iii), and (iv) in the definition of $\mathcal{W}_{\bigclass}(X,\omega,p,A,B)$. The following theorem has been proved in \cite[Section 7]{Nguyen_Vu_diamterforbigclass}.

\begin{theorem}\label{approximate T by good ones}
    Given $T \in \mathcal{W}^*_{\bigclass}(X,\omega,p,A,B)$. Let $q\in (1,n/(n-1))$. Then there exists a sequence of closed positive $(1,1)$-currents $(T_k)_k$ such that the potential of $T_k$ converges to the potential of $T$ in capacity. Moreover, $T_k$ is a smooth K\"ahler form outside an analytic set, and there exists a constant $C = C(\omega,n,p,A,B,q)$ such that
    \begin{equation}\label{eq sobolev ineq Tk}\Big(\frac{1}{V_{T_k}}\int_{U_{T_k}} |u|^{2q}  T_k^n  \Big)^{1/q} \leq C \Big(\frac{1}{V_{T_k}} \int_{U_{T_k}}  du\wedge d^c u\wedge T_k^{n-1}  + \frac{1}{V_{T_k}} \int_{U_{T_k}} |u|^2  T_k^n  \Big)\end{equation}
    for every $u\in W_{T_k}^{1,2}(X)$.
\end{theorem}

\begin{proof}
    We briefly explain why one can prove this theorem by using arguments in \cite[Section 7]{Nguyen_Vu_diamterforbigclass}. Let $T \in \mathcal{W}^*_{\bigclass}(X,\omega,p,A,B)$. Then, by using Demailly's regularization theorem, one can approximate $T$ by a sequence $(T_k')_k$ such that $T_k'$ is a smooth K\"ahler form outside an analytic set and has analytic singularities. One can modify $T_k'$ (by taking blow-ups, solving the complex Monge-Amp\`ere equation in the prescribed singularity setting, and pushing down) to obtain $T_k$ that satisfies~\eqref{eq sobolev ineq Tk}. Condition (i) is only needed when we need to pass the Sobolev inequality to $T$ (to have a good definition of $du\wedge d^c u \wedge T$).
\end{proof}

We now prove the main result of this section.

\begin{theorem}\label{sobolevW* 2}
    Let $u\geq 0$ be a bounded function in $W^{1,2}_*(X)$ and $du\wedge d^c u \leq S$ for some closed positive $(1,1)$-current $S$. Suppose that $u$ is continuous outside a closed pluripolar set. Let $q\in (1,n/(n-1))$ and $r\geq 2$ be constants. Then for every $T\in \mathcal{W}^*_{\bigclass}(X,\omega,p,A,B)$ there exists a constant $C= C(\omega,n,p,A,B,q)$ such that
    \[\Big(\frac{1}{V_T} \int_X u^{rq} T^n \Big)^{1/q} \leq Cr^2 \Big(\frac{1}{V_T} \int_X u^{r-2} S\wedge T^{n-1} + \frac{1}{V_T}\int_X u^r T^n\Big).\]
\end{theorem}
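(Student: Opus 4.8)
The plan is to deduce Theorem~\ref{sobolevW* 2} from Lemma~\ref{sobolevW*} (which handles the case of currents in $\mathcal{W}_{\bigclass}$) together with the approximation result Theorem~\ref{approximate T by good ones} and the energy-type convergence machinery from Sections~\ref{intbypart}--\ref{section CS}. Given $T \in \mathcal{W}^*_{\bigclass}(X,\omega,p,A,B)$, take the sequence $(T_k)_k$ from Theorem~\ref{approximate T by good ones}: each $T_k$ is a smooth K\"ahler form outside an analytic set, satisfies the Sobolev inequality with a uniform constant $C$, and the potential of $T_k$ converges to that of $T$ in capacity. Writing $T_k = \theta_k + dd^c \varphi_k$ with $\varphi_k \to \varphi$ in capacity (and all potentials with minimal singularities, hence uniformly comparable to $V_{\theta_k}$ up to $O(1)$), the strategy is to apply Lemma~\ref{sobolevW*} to each $T_k$ and pass to the limit $k \to \infty$ in every term. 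Since $u$ is bounded, continuous outside a closed pluripolar set, and $du \wedge d^c u \le S$, all the relevant non-pluripolar products $u^{r-2} S \wedge T_k^{n-1}$, $u^r T_k^n$, and $u^{rq} T_k^n$ are well-defined (using the integration-by-parts and Cauchy--Schwarz formalism developed earlier, in particular Theorem~\ref{CSdsh} to make sense of $S \wedge T_k^{n-1}$ as a non-pluripolar mass), so the inequality
\[
\Big(\frac{1}{V_{T_k}} \int_X u^{rq} T_k^n \Big)^{1/q} \leq Cr^2 \Big(\frac{1}{V_{T_k}} \int_X u^{r-2} S\wedge T_k^{n-1} + \frac{1}{V_{T_k}}\int_X u^r T_k^n\Big)
\]
holds for every $k$.

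Next I would establish the convergence of each side. For the volumes, $V_{T_k} = \int_X \langle T_k^n\rangle \to V_T$ because $V_{T_k}$ depends only on the cohomology class (which is fixed, or at worst converges), so the normalizing factors are harmless. For the right-hand side, one uses that convergence of potentials in capacity, combined with the monotone/plurifine approximation identities of the type in \eqref{converge2} and the convergence lemmas (Lemma~\ref{converge3}, Corollary~\ref{converge4}, Lemma~\ref{key2}), gives $u^r T_k^n \to u^r T^n$ and $u^{r-2} S \wedge T_k^{n-1} \to u^{r-2} S \wedge T^{n-1}$ — here the continuity of $u$ outside a pluripolar set lets us treat $u^r$ and $u^{r-2}$ as ``almost continuous'' weights via Lusin's theorem applied to the non-pluripolar measures $T^n$ and $S \wedge T^{n-1}$, exactly as in the proof of Corollary~\ref{converge4}. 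For the left-hand side, the function $t \mapsto t^{1/q}$ is continuous, so it suffices to pass to the limit inside $\int_X u^{rq} T_k^n$; by lower semicontinuity of mass under the relevant weak convergence we get $\liminf_k \int_X u^{rq} T_k^n \ge \int_X u^{rq} T^n$ on the one hand, and the upper bound on the right forces the full convergence, or at the very least the desired inequality for $T$ after taking $\liminf$ on the left and $\lim$ on the right. Combining these, the inequality passes to $T$, which is precisely the claim.

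The main obstacle I anticipate is the passage to the limit in the Sobolev/energy terms when the weights $u^r$, $u^{r-2}$ are only bounded and continuous off a pluripolar set rather than continuous everywhere: one must be careful that the non-pluripolar measures $T_k^n$ and $S \wedge T_k^{n-1}$ do not concentrate mass on the exceptional pluripolar set of $u$ in the limit, and that the Lusin-type approximation of $u^r$ can be done uniformly in $k$. This is handled by the uniform capacity bounds of Corollary~\ref{boundbycap}-type (the measures $S \wedge T_k^{n-1}$ are dominated by a common modulus-of-continuity function of capacity, coming from the uniform bounds on the potentials and Lemma~\ref{dv wedge dc v control}), which ensures equicontinuity of the masses with respect to capacity and hence allows the approximation to be performed with a single continuous $\widetilde{u}$ agreeing with $u$ outside a set of small capacity, valid simultaneously for all $k$. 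The other technical point — that $u^{r/2} \in W^{1,2}_{T_k}(X)$, needed to invoke the Sobolev inequality for $T_k$ — is exactly the computation $d(u^{r/2}) \wedge d^c(u^{r/2}) = \tfrac{r^2}{4} u^{r-2} du \wedge d^c u \le \tfrac{r^2}{4} u^{r-2} S$ already used in Lemma~\ref{sobolevW*}, together with the finiteness of $\int_X u^{r-2} S \wedge T_k^{n-1}$ guaranteed by Theorem~\ref{CSdsh}. Once these uniformities are in place, the limit is routine and the theorem follows.
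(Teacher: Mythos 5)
Your overall strategy matches the paper's: take the regularizing sequence $T_k$ from Theorem~\ref{approximate T by good ones}, apply the $T_k$-Sobolev inequality to the function $u^{r/2}$ (via the computation $d(u^{r/2})\wedge d^c(u^{r/2}) \le r^2 u^{r-2} S$ exactly as in Lemma~\ref{sobolevW*}), and then pass to the limit in $k$ using the weak convergences $T_k^n \to T^n$ and $S\wedge T_k^{n-1} \to S\wedge T^{n-1}$ from \cite[Theorem 5.4]{Nguyen_Vu_diamterforbigclass}. However, you omit the device that actually makes the limit passage go through, and your proposed substitute has real gaps.

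The paper does \emph{not} apply the Sobolev inequality to $u^{r/2}$ directly. It first multiplies by a sequence of smooth cut-off functions $\chi_l$ compactly supported in the open set $\Omega$ on which $u$ is continuous, applies the $T_k$-Sobolev inequality to $\chi_l u^{r/2}$ (which is continuous on all of $X$), lets $k\to\infty$ with everything continuous so that weak convergence of measures is immediately usable, controls the extra cross-terms involving $d\chi_l$ via Theorem~\ref{CSdsh}, and only then lets $l\to\infty$, using that $T^{n-1}$ puts no mass on $X\setminus\Omega$ so $\int_X d\chi_l\wedge d^c\chi_l\wedge T^{n-1}\to 0$. Without the cut-off, the limits $\int_X u^{rq}T_k^n \to \int_X u^{rq}T^n$ and $\int_X u^{r-2}S\wedge T_k^{n-1}\to\int_X u^{r-2}S\wedge T^{n-1}$ are not justified: weak convergence of measures against a bounded Borel function that is merely continuous off a (possibly dense) closed pluripolar set does not give either inequality direction, and $u^{rq}$ is not lower semicontinuous, so your ``lower semicontinuity of mass'' step for the left-hand side fails as stated. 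Your suggested repair — a uniform-in-$k$ Lusin argument based on capacity equicontinuity of $T_k^n$ and $S\wedge T_k^{n-1}$ — is also not established by the references you cite: Corollary~\ref{boundbycap} bounds $|du_k\wedge d^c\psi\wedge R|$ (a completely different object) and Lemma~\ref{dv wedge dc v control} is about $dv\wedge d^c v$ for $v$ a difference of bounded quasi-psh functions, neither of which yields a uniform modulus $T_k^n(E)\le f(\capa_\omega(E))$. Such a Kolodziej-type capacity estimate may well hold for this family of $T_k$, but it would require an independent argument; the paper sidesteps the issue entirely by the cut-off. You should also note that the paper does not invoke Lemma~\ref{sobolevW*} for $T_k$ as you propose (there is no claim $T_k\in\mathcal{W}_{\bigclass}$); it uses the Sobolev inequality stated directly in Theorem~\ref{approximate T by good ones}, which amounts to the same thing but is the intended formulation.
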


\begin{proof}
    Assume that $u$ is continuous on an open set $\Omega$ of $X$ such that $X\setminus \Omega$ is a pluripolar set. Let $(\chi_l)_l$ be a sequence of smooth cut-off functions such that $\chi_l$ has compact support on $\Omega$ and $\chi_l$ increases to $\mathbf{1}_{X\setminus \Omega}$. We have
    \begin{align}\label{d chi l u r dc chi l u r}
    d (\chi_l u^{r/2} )\wedge d^c (\chi_l u^{r/2}) &= u^{r} d\chi_l \wedge d^c \chi_l + \chi_l^2 d(u^{r/2}) \wedge d^c (u^{r/2}) \\ \nonumber &+ u^{r/2} \chi_l (d\chi_l \wedge d^c(u^{r/2}) + d(u^{r/2})\wedge d^c \chi_l).
\end{align} 

Let $T_k$ be the sequence in Theorem~\ref{approximate T by good ones} associated to $T$. Using Sobolev inequality in Theorem~\ref{approximate T by good ones} and arguing similar to proof of Lemma~\ref{sobolevW*}, we get
\[\Big(\frac{1}{V_{T_k}}\int_{X} (\chi_l u^{r/2})^{2q}  T_k^n  \Big)^{1/q} \leq C \Big(\frac{1}{V_{T_k}} \int_{U_{T_k}}  d(\chi_l u^{r/2})\wedge d^c (\chi_l u^{r/2})\wedge T_k^{n-1}  + \frac{1}{V_{T_k}} \int_{X} \chi_l^2u^r  T_k^n  \Big).\]
We note that $\chi_l u^{r/2}$ is continuous on $X$.

We want to control term $d(\chi_l u^{r/2})\wedge d^c (\chi_l u^{r/2})\wedge T_k^{n-1}$. By formula~\eqref{d chi l u r dc chi l u r}, we need to control four terms: 

The first term is $u^r d\chi_l \wedge d^c \chi_l \wedge T_k^{n-1}$. We can estimate 
\begin{equation}\label{estimate first term}\int_X u^{r} d\chi_l \wedge d^c \chi_l \wedge T_k^{n-1} \leq \|u\|^r_{L^\infty} \int_X d\chi_l \wedge d^c \chi_l \wedge T_k^{n-1}.\end{equation}

Next term is $ \chi_l^2 d(u^{r/2}) \wedge d^c (u^{r/2}) \wedge T_k^{n-1}$. Note that $d(u^{r/2}) \wedge d^c (u^{r/2}) \leq r^2 u^{r-2} S$, we can bound
\begin{equation}\label{estimate second term}
    \int_{U_{T_k}} \chi_l^2 d(u^{r/2}) \wedge d^c (u^{r/2}) \wedge T_k^{n-1} \leq \int_X \chi_l^2 r^2 u^{r-2} S \wedge T_k^{n-1}.
\end{equation}

The remaining terms are $u^{r/2} \chi_l d\chi_l \wedge d^c (u^{r/2}) \wedge T^{n-1}_k$ and $u^{r/2} \chi_l d(u^{r/2})\wedge d^c \chi_l \wedge T_k^{n-1}$. We will use the same bound for both term. By Theorem~\ref{CSdsh}, we have

\begin{align}\label{estimate third term} \int_{U_{T_k}}u^{r/2} \chi_l d\chi_l \wedge d^c &(u^{r/2}) \wedge T^{n-1}_k \leq \|u\|_{L^\infty}^{r/2} \int_X d\chi_l \wedge d^c(u^{r/2}) \wedge T_k^{n-1} \\ \nonumber
&\leq \|u\|_{L^{\infty}}^{r-1} r \Big(\int_X d\chi_l \wedge d^c \chi_l \wedge T_k^{n-1}\Big)^{1/2} \Big(\int_X S\wedge T_k^{n-1}\Big)^{1/2}.
\end{align}
Here, we use non-pluripolar product in the first inequality. 

By \cite[Theorem 5.4]{Nguyen_Vu_diamterforbigclass}, $T_k^n \rightarrow T^n$ and $S\wedge T_k^{n-1} \rightarrow S\wedge T^{n-1}$ weakly as measures. Combining \eqref{estimate first term}, \eqref{estimate second term}, \eqref{estimate third term}, and letting $k\rightarrow \infty$ give us the bound
\begin{align*}\Big(\frac{1}{V_T} \int_X (\chi_l u^{r/2})^{2q}& T^n \Big)^{1/q} \leq \frac{C}{V_T}\Big( \|u\|^r_{L^\infty} \int_X d\chi_l \wedge d^c \chi_l \wedge T^{n-1} + \int_X \chi_l^2 r^2 u^{r-2} S \wedge T^{n-1} \\
&+2 \|u\|_{L^{\infty}}^{r-1} r \Big(\int_X d\chi_l \wedge d^c \chi_l \wedge T^{n-1}\Big)^{1/2} \Big(\int_X S\wedge T^{n-1}\Big)^{1/2} 
+ \int_X \chi_l^2 u^r T^n\Big).
\end{align*}
Since $T^{n-1}$ puts no mass on $X\setminus \Omega$, it is standard that 
\[\int_X d\chi_l \wedge d^c \chi_l \wedge T^{n-1} \to 0 \]
as $l\rightarrow \infty$ (see e.g. proof of Lemma~\ref{convergencenonpluri}). Letting $l \rightarrow \infty$ give us the desired estimate. The proof is complete.
\end{proof}

\begin{remark}
    We note that to prove Theorem~\ref{mainresult}, it is enough to obtain Theorem~\ref{sobolevW* 2} for the continuous function $u$, since we consider the ball lying inside the smooth locus $U_T$ of $T$. We choose to prove this version to show that one can run similar machinery as in \cite{Vu-diameter} to obtain results in diameter estimates for K\"ahler metrics in big cohomology classes. We also expect that Theorem~\ref{sobolevW* 2} holds for a general bounded function $u\in W^{1,2}_*(X)$.
\end{remark}

\section{Proof of main results}\label{proof of main result}

We prove our main results in this section. Our proof is similar to that of \cite{Vu-diameter}, with new ingredients being energy estimates in Section \ref{energy estimate section} and new Sobolev inequalities in Section~\ref{section Sobolev ineq for complex Sobolev}.

\begin{proof}[Proof of Theorem~\ref{mainresult}] Let $T$ be a singular metrics in $\mathcal{M}_{\bigclass}(X,A)$ that satisfies~\eqref{ine-dkLplocal}. Then, we can write $V_T^{-1}T^n = f \omega_X^n$ on $U_T$ where $f$ is a smooth function on $U_T$. Let $\chi$ be the cut-off function such that $0\leq \chi\leq 1$, $\chi$ has support on $B_{\d_T}(x_0,2R_0/3)$ and $\chi \equiv 1$ on $B_{\d_T}(x_0,R_0/2)$. Put $f_1 = \chi f\geq 0$. We note that
\[\int_X |f_1|^{p_0}\omega_X^n = \int_{B_{\d_T}(x_0,2R_0/3)} |\chi f|^{p_0} \omega_X^n \leq \int_{B_{\d_T}(x_0,R_0)} |f|^{p_0} \omega_X^n
 \leq K.\]
Since $T\in \mathcal{M}_{\bigclass}(X,A)$, we can write $T = \theta + dd^c \varphi$ where $\theta$ is a closed smooth form such that $\|\varphi - V_\theta\|_{L^\infty} \leq A$. Let $\widetilde{T} := \theta + dd^c \widetilde{\varphi}$ where $\widetilde{\varphi}$ satisfies 
\[V_T^{-1}(\theta + dd^c \widetilde{\varphi})^n = f_1 \omega_X^n + V_{\omega_X}^{-1} \Big(1 - \int_X f_1 \omega_X^n \Big) \omega_X^n.\]

Let $\phi$ be a $\theta$-psh function with $\sup_X \phi = 0$. Note that from the construction, we can pick a constant $B$ depending only on $A$ such that $\theta \leq B \omega_X$. This implies $\phi$ is also a $B\omega_X$-psh function. So, there exists $\alpha = \alpha(A)$ and $B_1 = B_1 (\alpha)$ such that: for every $\theta$-psh function $\phi$ with $\sup_X \phi = 0$,
\[\int_X e^{\alpha|\phi|} \omega_X^n \leq B_1.\]
Put $g = f_1 + V_{\omega_X}^{-1} (1- \int_X f_1 \omega_X^n)$, we can bound $\int_X g^{p_0} \omega_X^n \leq B_2$ where $B_2$ is a constant depending on $p_0,K$ and $\omega_X$. Hence, by \cite[Theorem 1.9]{DiNezzaGG}, there exists a constant $M = M(\omega_X,A,p_0,K)$ such that $\|\widetilde{\varphi}-V_\theta\|_{L^\infty}\leq M$. We have shown that $T - \widetilde{T} = dd^c \psi$ where $\psi$ is a dsh function such that $\|\psi\|_{L^\infty} \leq M$, where $M$ is some uniform constant.

Let $\rho: \R \rightarrow \R_{\geq 0}$ be a smooth function such that $0 \leq \rho \leq 1$, $\rho = 1 $ on $[-1/2,1/2]$ and $\rho = 0$ outside $[-2/3,2/3]$. Let $u = \d_T(x_0,\cdot)$ and $u_r = \rho(u/r)$ for $r\leq R_0/2$, then $u_r$ has support in $B_{\d_T}(x_0,R_0/3)$. We have
\begin{equation}\label{du_rd^cu_r}
du_r \wedge d^c u_r \leq D r^{-2} T
\end{equation}
where $D$ is a constant depending only on $\rho$ (for a proof of this inequality, see e.g. \cite[Lemma 2.1]{Vu-log-diameter}). Let $p\geq 2n+2$ be a constant, by Proposition~\ref{energy2},
\begin{align*}\int_X u_r^{p-2} T\wedge \widetilde{T}^{n-1} &= (Dr^{-2})^{-n}\int_X u_r^{p-2} (Dr^{-2}T)\wedge (Dr^{-2}\widetilde{T})^{n-1} \\
&\leq (Dr^{-2})^{-1} 2^{n-1}(p-1)^{2(n-1)}M^{n-1} \int_X (u_r^{p-2} + u_r^{p-2-2n})(Dr^{-2}T)^n
\\
&\leq (Dr^{-2})^{n-1}2^{n}M^{n-1}(p-1)^{2(n-1)} \int_X u_r^{p-2-2n} T^n.
\end{align*}
Here, we use $0\leq u_r \leq 1$ for the last line. By Theorem~\ref{sobolevW* 2}, for $q\in (1,n/(n-1))$, we get
\begin{align*}
\Big(\frac{1}{V_T}\int_X u_r^{pq} \widetilde{T}^n \Big)^{1/q} &\leq Cp^2\Big(\frac{1}{V_T}\int_X u_r^{p-2} (Dr^{-2}T)\wedge \widetilde{T}^{n-1} + \frac{1}{V_T}\int_X u_r^p \widetilde{T}^{n}\Big) \\
&\leq Cp^{2n} (Dr^{-2})^{n} 2^{n-1} \Big(\frac{1}{V_T}\int_X u_r^{p-2-2n}T^n\Big) + Cp^2\Big(\frac{1}{V_T}\int_X u_r^p \widetilde{T}^n \Big) \\
&\leq Cp^{2n}(Dr^{-2})^n2^n \Big( \frac{1}{V_T}\int_X u_r^{p-2-2n}\widetilde{T}^n \Big).
\end{align*}
Here, we use the fact $T^n \leq \widetilde{T}^n$ on $B_{\d_T}(x_0,R_0/2)$ and again $0\leq u_r \leq 1$ for the last line. Note also that when $r\rightarrow 0$, $r^{-2n} \rightarrow \infty$.

Putting $p= 4n+3$, 
we get
\[\Big(\frac{1}{V_T}\int_X u_r^{(4n+3)q} T^n \Big)^{1/q} \leq C(4n+3)^{2n}(Dr^{-2})^n2^n \Big( \frac{1}{V_T}\int_X u_r^{2n+1}\widetilde{T}^n \Big).\]
Applying Proposition~\ref{energy2} for the right-hand side, we get
\begin{align*}\int_X u_r^{2n+1} \widetilde{T}^{n}
&\leq  2^{n}(2n+2)^{2n}M^{n} \int_X (u_r^{2n+1} + u_r)(Dr^{-2}T)^n
\\
&\leq (Dr^{-2})^{n}2^{n+1}(2n+2)^{2n}M^n \int_X u_r T^n.
\end{align*}
This deduces
\[\Big(\frac{1}{V_T}\int_X u_r^{(4n+3)q} T^n \Big)^{1/q} \leq Cr^{-4n} \Big(\frac{1}{V_T}\int_X u_r T^n \Big)\]
for some constant $C = C(\omega_X,n,A,p_0,K)$. Now, by the definition of $u_r$, we infer that
\[\Big( \frac{\vol_T(B_{\d_T}(x_0,r/2))}{V_T}\Big)^{1/q} \leq C \Big( \frac{\vol_T(B_{\d_T}(x_0,r))}{r^{4n}V_T}\Big).\]
This is equivalent with
\[\Big( \frac{\vol_T(B_{\d_T}(x_0,r/2))}{(r/2)^aV_T}\Big)^{1/q} \leq C \Big( \frac{\vol_T(B_{\d_T}(x_0,r))}{r^{a}V_T}\Big),\]
where $a= 4n q/(q-1) > 0$. Applying this inequality for $r_m = 2^{-m}r$ with $m=0,1,\ldots,l$, we obtain
\begin{equation}\label{compareradius}\Big( \frac{\vol_T(B_{\d_T}(x_0,2^{-l-1}r))}{(2^{-l-1}r)^aV_T}\Big)^{1/q^{l+1}} \leq C^{\sum_{m=0}^l q^{-m}} \Big( \frac{\vol_T(B_{\d_T}(x_0,r))}{r^{a}V_T}\Big).\end{equation}
We note that when $l\rightarrow \infty$, the ball $B_{\d_T}(x_0,2^{-l-1}r)$ is approximately Euclidean. This means
\[\lim_{l\rightarrow \infty} \frac{\vol_T(B_{\d_T}(x_0,2^{-l-1}r))}{(2^{-l-1}r)^{2n}V_T} = \frac{c_n}{V_T},\]
where $c_n$ is the volume of the unit ball in $\C^n$. From this, we have
\begin{align*}&\lim_{l\rightarrow \infty}\Big( \frac{\vol_T(B_{\d_T}(x_0,2^{-l-1}r))}{(2^{-l-1}r)^aV_T}\Big)^{1/q^{l+1}}\\ &= \lim_{l\rightarrow \infty} \Big( \frac{\vol_T(B_{\d_T}(x_0,2^{-l-1}r))}{(2^{-l-1}r)^{2n}V_T}\Big)^{1/q^{l+1}} (2^{-l-1}r)^{(2n-a)/q^{l+1}} =1.
\end{align*}
Moreover, we have $\lim\limits_{l\rightarrow \infty}C^{\sum_{m=0}^l q^{-m}} = C^{q/(q-1)} $. Now, letting $l\rightarrow \infty$ in \eqref{compareradius} give us
\[\Big( \frac{\vol_T(B_{\d_T}(x_0,r))}{r^{a}V_T}\Big) \geq C^{-q/(q-1)}.\]
The proof is complete.
\end{proof}

\begin{proof}[Proof of Corollary~\ref{corollaryofmainresult}]
    Let $C$ be the constant in the Theorem~\ref{mainresult}. We note that $C$ is not depending on the choice of $x_0$ and $R_0$. Assume that $k = C^{-1}$ is an integer (we can let $C$ smaller if necessarily).
    Let $x\in U$, we will show that $\d_T(x,\partial U) \leq 6(k + 1)$. Suppose otherwise $\d_T(x,\partial U ) > 3 (k + 1)$. We then pick a sequence of points $(x_i)_{i=0}^k$ where $x_0 = x$, $x_i \in B_{\d_T}(x_0,3 i + 1) \setminus B_{\d_T} (x_0, 3i)$ for $i = 1 ,\ldots,k$. The balls $B_{\d_T}(x_i,1)$ are disjoints and contained in $U$. By Theorem~\ref{mainresult}, the volume of $B_{\d_T}(x_i,1)$ is greater than $V_T/k$. Thus, we get
    \[V_T \geq \vol_T(U) \geq  \sum_{i=0}^k\vol_T(B_{\d_T}(x_i,1)) \geq  \frac{k+1}{k}V_T.\]
    This is a contradiction. The result follows.
\end{proof}

\bibliography{biblio_family_MA,biblio_Viet_papers,bib-kahlerRicci-flow}
\bibliographystyle{alpha}

\bigskip

\noindent
\Addresses
\end{document}